\theoremstyle{definition}
\newtheorem{definition}{Definition}[section]
\theoremstyle{plain}
\newtheorem{lemma}[definition]{Lemma}
\newtheorem{proposition}[definition]{Proposition}
\newtheorem{theorem}[definition]{Theorem}
\newtheorem{corollary}[definition]{Corollay}
\numberwithin{equation}{section}
\begin{document}
\title[Existence and Convergence Theorems]{Existence and Convergence
Theorems for Multivalued Generalized Hybrid mappings in CAT($\kappa $)-spaces%
}
\author[E. Haci\u{g}lu]{Emirhan Haci\u{g}lu}
\address[E. HACIO\u{G}LU]{Department of Mathematics, Yildiz Technical
University, Davutpasa Campus, Esenler, 34220 Istanbul, Turkey}
\email{\texttt{emirhanhacioglu@hotmail.com}}
\author[V. Karakaya]{Vatan Karakaya}
\address[V. Karakaya]{Department of Mathematical Engineering, Yildiz
Technical University, Davutpasa Campus, Esenler, 34210 Istanbul,Turkey}
\email{\texttt{vkkaya@yahoo.com}}
\keywords{Iterative methods;Hybrid mapping, generalized hybrid mapping,
Convergence analysis; Multivalued mappings, CAT($\kappa $) spaces,$\Delta -$%
convergence}
\subjclass[2010]{????????}

\begin{abstract}
In this study we give definition of some multivalued hybrid mappings which
are general than multivalued nonexpansive mappings and some others. Also we
give existence and convergence results in CAT($\kappa $)-spaces
\end{abstract}

\maketitle

\section{Introduction}

Let $K$ be a nonempty subset of a Hilbert space $H$ and $T:K\rightarrow H$
be a mapping then, for all $x,y\in K,$ if $T$ satisfies 
\begin{equation*}
||Tx-Tx||\leq ||x-y||,
\end{equation*}%
\begin{equation*}
2||Tx-Tx||^{2}\leq ||Tx-y||^{2}+||Ty-x||^{2},
\end{equation*}%
\ and%
\begin{equation*}
3||Tx-Tx||^{2}\leq ||x-y||^{2}+||Tx-y||^{2}+||Ty-x||^{2}
\end{equation*}%
then it called nonexpansive, nonspreading\cite{kohsaka} and hybrid\cite%
{takhas} respectively and none of this mappings included in other, In 2010
Aoyama et al.\cite{aoyam} defined $\lambda -$hybrid as a follows%
\begin{equation*}
(1+\lambda )||Tx-Tx||^{2}-\lambda ||x-Ty||^{2}\leq (1-\lambda
)||x-y||^{2}+\lambda ||Tx-y||^{2}
\end{equation*}%
where $\lambda \in 
\mathbb{R}
$. $\lambda -$hybrid mappings are general than nonexpansive mappings,
nonspreading mappings and also hybrid mappings. In 2011 Aoyama and Kohsaka%
\cite{aoyam2} introduced $\alpha -$nonexpansive mappings in Banach spaces as
follows,%
\begin{equation*}
||Tx-Tx||^{2}\leq (1-2\alpha )||x-y||^{2}+\alpha ||Tx-y||^{2}+\alpha
||x-Ty||^{2}
\end{equation*}%
where $\alpha <1$ and also they showed that $\alpha -$nonexpansive and $%
\lambda -$hybrid are equivalent in Hilbert spaces for $\lambda <2$. Kocourek
et al.\cite{kocour} introduced more general mapping class than above
mappings in Hilbert spaces,called $(\alpha ,\beta )-$generalized hybrid, as
follows 
\begin{equation*}
\alpha ||Tx-Tx||^{2}+(1-\alpha )||x-Ty||^{2}\leq \beta ||Tx-y||^{2}+(1-\beta
)||x-y||^{2}
\end{equation*}%
Also, in 2011, Lin et al.\cite{lin}defined a generalized hybrid mappings in
CAT$(0)$, which is more general than nonexpansive, nonspreading and hybrid
mappings in Banach spaces$,$ as a follows%
\begin{eqnarray*}
d^{2}(Tx,Ty)\leq a_{1}(x)d^{2}(x,y)+a_{2}(x)d^{2}(Tx,y)+a_{3}(x)d^{2}(x,Ty)
\\
+k_{1}(x)d^{2}(Tx,x)+k_{2}(x)d^{2}(Ty,y)
\end{eqnarray*}%
where $a_{1,}a_{2},a_{3},k_{1},k_{2}:X\rightarrow \lbrack 0,1]$ with $%
a_{1}(x)+a_{2}(x)+a_{3}(x)<1$, $2k_{1}(x)<1-a_{2}(x)$ and $%
2k_{2}(x)<1-a_{3}(x)$ for all $x,y\in X.$It is easy to see that $(\alpha
,\beta )-$generalized hybrid and generalized hybrid mappings are independent
in metric spaces. In this paper we define two multivalued mapping class
which general than $(\alpha ,\beta )-$generalized hybrid and generalized
hybrid mappings, then establish existence and convergence theorems in Cat($%
\kappa $) spaces for $\kappa >0$.

\section{Preliminaries}

\qquad\ Let $(X,d)$ be a metric space and $K$ a nonempty subset of $X$ . If
there is $y\in K$ such that $d(x,y)=d(x,B)=\inf \{d(x,y);y\in B\}$ then $K$
is called proximinal subset of $X$. The family of nonempty compact\ convex
subsets of $X$ , the family of nonempty closed and convex subsets of $X,$the
family of nonempty closed and bounded subsets of $X$ and the family of
nonempty proximinal subsets will be denoted by $KC(X),$ $CC(X),$ $CB(X),P(X)$
, respectively. Let $H$ Haussdorf Metric on $CB(X),$ defined by

\begin{equation*}
H(A,B)=\max \{\sup_{x\in A}d(x,B),\sup_{x\in B}d(x,A)\}
\end{equation*}%
\ where $d(x,B)=\inf \{d(x,y);y\in B\}.$

\qquad A multivalued mappings $T:K\rightarrow 2^{E}$ is called nonexpansive
if for all $x,y\in K$ \ and $p\in F(T)$\ 
\begin{equation*}
H(Tx,Ty)\leq d(x,y)
\end{equation*}

A point \ is called fixed point of $T$ \ if $x\in Tx$ and the set of all
fixed points of $T$ is denoted by $F(T).$

\qquad Many iterative method to approximate a fixed points of the mappings
have been introduced for single-valued mappings in Banach spaces, well known
one is defined by Picard as%
\begin{equation*}
x_{n+1}=Tx_{n}
\end{equation*}%
After Picard, Mann and Ishikawa introduced new iteration procedures,
respectively, as follows%
\begin{equation*}
x_{n+1}=(1-\alpha _{n})x_{n}+\alpha _{n}Tx_{n}
\end{equation*}%
and%
\begin{eqnarray*}
x_{n+1} &=&(1-\alpha _{n})x_{n}+\alpha _{n}Ty_{n} \\
y_{n} &=&(1-\beta _{n})x_{n}+\beta _{n}Tx_{n}
\end{eqnarray*}%
where $\{\alpha _{n}\}$ and $\{\beta _{n}\}$are $\ $sequences\ in$\ [0,1].$%
In 2007, Agarwal \textit{et al. }defined following iteration as%
\begin{eqnarray*}
s_{n+1} &=&(1-\alpha _{n})Ts_{n}+\alpha _{n}Tt_{n} \\
t_{n} &=&(1-\beta _{n})s_{n}+\beta _{n}Ts_{n}
\end{eqnarray*}%
where $\{\alpha _{n}\}$ and $\{\beta _{n}\}$are $\ $sequences\ in$\ [0,1].$%
In 2008, \ S. Thianwan introduce two step iteration as%
\begin{eqnarray*}
x_{n+1} &=&(1-\alpha _{n})y_{n}+\alpha _{n}Ty_{n} \\
y_{n} &=&(1-\beta _{n})x_{n}+\beta _{n}Tx_{n}
\end{eqnarray*}%
where $\{\alpha _{n}\}$ and $\{\beta _{n}\}$are $\ $sequences\ in$\ [0,1].$%
M.A. Noor defined Noor iteration in 2001 and Phuengrattana and Suantai
defined the SP iteration as follows%
\begin{eqnarray*}
x_{n+1} &=&(1-\alpha _{n})x_{n}+\alpha _{n}Ty_{n} \\
y_{n} &=&(1-\beta _{n})x_{n}+\beta _{n}Tz_{n} \\
z_{n} &=&(1-\gamma _{n})x_{n}+\gamma _{n}Tx_{n}
\end{eqnarray*}%
and%
\begin{eqnarray*}
x_{n+1} &=&(1-\alpha _{n})y_{n}+\alpha _{n}Ty_{n} \\
y_{n} &=&(1-\beta _{n})z_{n}+\beta _{n}Tz_{n} \\
z_{n} &=&(1-\gamma _{n})x_{n}+\gamma _{n}Tx_{n}
\end{eqnarray*}%
where $\{\alpha _{n}\}$, $\{\beta _{n}\}$ and \ $\{\gamma _{n}\}$ are $\ $%
sequences\ in$\ [0,1].$ Recently Renu Chugh, Vivek Kumar and Sanjay Kumar
defined CR- iteration as follows%
\begin{eqnarray*}
x_{n+1} &=&(1-\alpha _{n})y_{n}+\alpha _{n}Ty_{n} \\
y_{n} &=&(1-\beta _{n})Tx_{n}+\beta _{n}Tz_{n} \\
z_{n} &=&(1-\gamma _{n})x_{n}+\gamma _{n}Tx_{n}
\end{eqnarray*}%
where $\{\alpha _{n}\}$, $\{\beta _{n}\}$ and\ $\{\gamma _{n}\}$ are
sequences\ in$\ [0,1]$ with $\{\alpha _{n}\}$ satisfying $\sum_{n=0}^{\infty
}\alpha _{n}=\infty .$

Also Gursoy and Karakaya [8] introduced Picard-S iteraion as follows%
\begin{eqnarray*}
x_{n+1} &=&Ty_{n} \\
y_{n} &=&(1-\alpha _{n})Tx_{n}\oplus \alpha _{n}Tz_{n} \\
z_{n} &=&(1-\beta _{n})x_{n}\oplus \beta _{n}Tx_{n}
\end{eqnarray*}%
where $\{\alpha _{n}\}$ and $\{\beta _{n}\}$ are$\ $sequences\ in$\ [0,1].$

The multivalued version of Thianwan iteration and Picad-S iteration defined
as a follows

\begin{eqnarray}
x_{n+1} &=&P_{K}((1-\alpha _{n})y_{n}\oplus \alpha _{n}u_{n})  \TCItag{1.1}
\\
y_{n} &=&P_{K}((1-\beta _{n})x_{n}\oplus \beta _{n}v_{n})  \notag
\end{eqnarray}%
where $\{\alpha _{n}\}$ and $\{\beta _{n}\}$are sequences\ in$\ [0,1]$ and $%
u_{n}\in Ty_{n},$ $v_{n}\in Tx_{n}$ and%
\begin{eqnarray}
x_{n+1} &=&P_{K}(u_{n})  \TCItag{1.2} \\
y_{n} &=&P_{K}((1-\alpha _{n})w_{n}\oplus \alpha _{n}v_{n})  \notag \\
z_{n} &=&P_{K}((1-\beta _{n})x_{n}\oplus \beta _{n}w_{n})  \notag
\end{eqnarray}%
where $\{\alpha _{n}\}$ is a$\ $sequences\ in$\ [0,1],$ $u_{n}\in Ty_{n},$ $%
v_{n}\in Tz_{n},w_{n}\in Tx_{n}$.

\qquad Before the results we give some definitions and lemmas about $%
CAT(\kappa )$ with $\kappa >0$ and \ $\Delta -$convergences.

\qquad Let $(X,d)$ bounded metric space, $x,y\in X$ and $C\subseteq X$
nonempty subset. A geodesic path (or shortly a geodesic) joining x and y is
a map $c:[0,t]\subseteq 
\mathbb{R}
\rightarrow X$ \ such that $c(0)=x$, $c(t)=y$ and $d(c(r),c(s))=|r-s|$ for
all $r,s\in \lbrack 0,t].$In particular $c$ is an isometry and $%
d(c(0),c(t))=t.$ The image of $c,$ $c([o,t])$ is called geodesic segment
from $x$ to $y$ and it it is unique (it not necessarily be unique) then it
is denoted by $[x,y].$ $z\in \lbrack x,y]$ if and only if for an $t\in
\lbrack 0,1]$ such that $d(z,x)=(1-t)d(x,y)$ and $d(z,y)=td(x,y).$ The point 
$z$ is dednoted by $z=(1-t)x\oplus ty.$\ For fixed $r>0,$ the space $(X,d)$
\ is called r-geodesic space if any two point $x,y\in X$ with $d(x,y)<r$ \
there is a geodesic joining $x$ to $y$. if for every $x,y\in X$ there is a
geodesic path then $(X,d)$ called geodesic space and uniquely geodesic space
if that geodesic path is unique for any pair $x,y.$ A subset $C\subseteq X$
is called convex if it contains all geodesic segment joining any pair of
points in it.

\begin{definition}
\label{(def.2.1)} \cite{brid}Given a real number $\kappa $ then

\begin{enumerate}
\item[i)] i\textit{f }$\kappa =0$\textit{\ then }$M_{\kappa }^{n}$\textit{\
is Euclidean space }$E^{n}$

\item[ii)] \textit{if }$\kappa >0$\textit{\ then }$M_{\kappa }^{n}$\textit{%
is obtained from the shpere }$S^{n}$\textit{\ by multiplying distance
function by }$\frac{1}{\sqrt{\kappa }}$

\item[iii)] \textit{if }$\kappa <0$\textit{\ then }$M_{\kappa }^{n}$\textit{%
\ is obtained from hyperbolic space }$H^{n}$\textit{\ by multiplying
distance function by }$\frac{1}{\sqrt{-\kappa }}$
\end{enumerate}
\end{definition}

\qquad In geodesic metric space $(X,d)$ , A geodesic triangle $\Delta
(x,y,z) $ \ consist of three point $x,y,z$ as vertices and three geodesic
segments of any pair of these points,that is, $q\in $ $\Delta (x,y,z)$ means
that $q\in \lbrack x,y]\cup \lbrack x,z]\cup \lbrack y,z].$ The triangle $%
\overline{\Delta }(\overline{x},\overline{y},\overline{z})$ in $M_{\kappa
}^{2}$ is called comparison triangle for the triangle $\Delta (x,y,z)$ such
that $d(x,y)=d(\overline{x},\overline{y}),d(x,z)=d(\overline{x},\overline{z}%
) $ and $d(y,z)=d(\overline{y},\overline{z})$ and such a comparison triangle
always exist provided that the perimeter $d(x,y)+d(y,z)+d(z,x)<2D_{\kappa }$%
( $D_{\kappa }=\frac{\pi }{\sqrt{K}}$ if $\kappa >0$ and $\infty $
otherwise) in $M_{\kappa }^{2}$ $\kappa $ 2.14 in \cite{brid}. A point point 
$\overline{z}\in \lbrack \overline{x},\overline{y}]$ called comparison point
for $z\in \lbrack x,y]$ if $d(x,z)=d(\overline{x},\overline{z}).$ A geodesic
triangle $\Delta (x,y,z)$ in $X$ with perimeter less than $2D_{\kappa }$
(and given a comparison triangle $\overline{\Delta }(\overline{x},\overline{y%
},\overline{z})$ for $\Delta (x,y,z)$ in $M_{\kappa }^{2}$) satisfies $%
CAT(\kappa )$ inequality if$\ d(p,q)\leq d(\overline{p},\overline{q})$ for
all $p,q\in $ $\Delta (x,y,z)$ where $\overline{p},\overline{q}\in \overline{%
\Delta }(\overline{x},\overline{y},\overline{z})$ are the comparison points
of $p,q$ respectively. The $D_{\kappa }$-geodesic metric space $(X,d)$ is
called $CAT(\kappa )$ space if \ every geodesic triangle in $X$ with
perimeter less than $2D_{\kappa }$ satisfies the $CAT(\kappa )$\ inequality.

Bruhat and Tits \cite{brid} shows that If $(X,d)$ \ is a $CAT(0)$ space,

\begin{equation*}
d^{2}(x,\frac{1}{2}y\oplus \frac{1}{2}z)\leq \frac{1}{2}d^{2}(x,y)+\frac{1}{2%
}d^{2}(x,z)-\frac{1}{4}d^{2}(y,z)
\end{equation*}%
satisfied for every $x,y,z\in X$, its called \textit{CN inequality} \ and
its generalized by Dhompongsa and Panyanak \cite{brid} as follows

\begin{equation*}
d^{2}(x,(1-\lambda )y\oplus \lambda z)\leq (1-\lambda )d^{2}(x,y)+\lambda
d^{2}(x,z)-\lambda (1-\lambda )d^{2}(y,z)
\end{equation*}%
for every $x,y,z\in X,\lambda \in \lbrack 0,1]$ (\textit{CN}$^{\ast }$%
\textit{\ inequality} ).

In fact, for a geodesic metric space $(X,d)$ following three statements are
equivalent;

\begin{enumerate}
\item[i)] $(X,d)$ is a $CAT(0)$

\item[ii)] $(X,d)$ satisfied \textit{CN inequality }

\item[iii)] $(X,d)$ satisfied \textit{CN}$^{\ast }$\textit{\ inequality }
\end{enumerate}

Let $(X,d)$ be geodesic space and $R\in (0,2]$. if for every $x,y,z\in X$

\begin{equation*}
d^{2}(x,(1-\lambda )y\oplus \lambda z)\leq (1-\lambda )d^{2}(x,y)+\lambda
d^{2}(x,z)-\frac{R}{2}\lambda (1-\lambda )d^{2}(y,z)
\end{equation*}%
is satisfied then $(X,d)$ called $R-$\textit{convex} \cite{ohta} Hence,\ $%
(X,d)$ is a $CAT(0)$ space if and only if it is a $2-$\textit{convex space}

\begin{proposition}
\cite{rash} The modulus of convexity for $CAT(\kappa )$ space $X$ (of
dimension $\geq 2$) and number $r<$ $\frac{\pi }{2\sqrt{\kappa }}$ and let $%
m $ denote the midpoint of the segment $[x,y]$ joining $x$ and $y$ define by
the modulus $\delta _{r}$ by sitting
\end{proposition}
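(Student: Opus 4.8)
The plan is to establish the positivity of the modulus $\delta_r(\epsilon)$ by transferring the estimate to the two-dimensional model space $M_\kappa^2$ and invoking the defining $CAT(\kappa)$ comparison inequality. I would first fix points $p,x,y\in X$ with $d(p,x)\leq r$, $d(p,y)\leq r$ and $d(x,y)\geq \epsilon r$, and write $m=\tfrac{1}{2}x\oplus\tfrac{1}{2}y$ for the midpoint of $[x,y]$. Since $r<\tfrac{\pi}{2\sqrt{\kappa}}$, the perimeter of the geodesic triangle $\Delta(p,x,y)$ is strictly less than $2D_\kappa=\tfrac{2\pi}{\sqrt{\kappa}}$, so a comparison triangle $\overline{\Delta}(\overline{p},\overline{x},\overline{y})$ exists in $M_\kappa^2$ by the perimeter condition recalled after Definition~\ref{(def.2.1)}.

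The key reduction is to pass to this comparison triangle. Letting $\overline{m}$ be the midpoint of $[\overline{x},\overline{y}]$, which is precisely the comparison point of $m$, the $CAT(\kappa)$ inequality gives $d(p,m)\leq d(\overline{p},\overline{m})$, and hence
\[1-\frac{d(p,m)}{r}\geq 1-\frac{d(\overline{p},\overline{m})}{r}.\]
It therefore suffices to bound $d(\overline{p},\overline{m})$ from above, which is a purely spherical computation since $M_\kappa^2$ is the unit sphere with its distance function rescaled by $\tfrac{1}{\sqrt{\kappa}}$. Because the constraints are symmetric in $\overline{x}$ and $\overline{y}$, the infimum defining $\delta_r$ is attained at an isosceles configuration, so only one extremal triangle must be analyzed.

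For the spherical estimate I would drop the perpendicular from $\overline{p}$ to the base; by the isosceles symmetry it meets the base at $\overline{m}$, and applying the spherical Pythagorean relation $\cos a=\cos h\,\cos(b/2)$ in the resulting right triangle expresses the height $h$ in terms of the side $a$ and the half-base $b/2$. Taking the worst case $d(\overline{p},\overline{x})=d(\overline{p},\overline{y})=r$ and $d(\overline{x},\overline{y})=\epsilon r$ and rescaling back by $\tfrac{1}{\sqrt{\kappa}}$ yields the closed form
\[\delta_r(\epsilon)=1-\frac{1}{\sqrt{\kappa}\,r}\arccos\!\left(\frac{\cos(\sqrt{\kappa}\,r)}{\cos\!\big(\tfrac{\sqrt{\kappa}\,r\,\epsilon}{2}\big)}\right).\]
The main obstacle is to verify that this expression is well defined and strictly positive on $(0,2]$. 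Well-definedness requires the argument of $\arccos$ to lie in $[-1,1]$, which is equivalent to $\cos(\sqrt{\kappa}\,r)\leq\cos\!\big(\tfrac{\sqrt{\kappa}\,r\,\epsilon}{2}\big)$, i.e. exactly to $\epsilon\leq 2$; positivity follows because the hypothesis $r<\tfrac{\pi}{2\sqrt{\kappa}}$ forces $\cos(\sqrt{\kappa}\,r)>0$, so for $\epsilon>0$ the quotient strictly exceeds $\cos(\sqrt{\kappa}\,r)$ and its $\arccos$ is strictly smaller than $\arccos(\cos(\sqrt{\kappa}\,r))=\sqrt{\kappa}\,r$. Care is needed only near the boundary, as $\epsilon\to 2$ and $r\uparrow\tfrac{\pi}{2\sqrt{\kappa}}$, where one checks that all the cosines remain positive so that no degeneracy of the model triangle occurs and the estimate stays strict.
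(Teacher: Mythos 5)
There is a fundamental mismatch here that you could not have known about: the ``Proposition'' in question is not actually a theorem. As stated in the paper (quoted from the reference [rash]), it is simply the \emph{definition} of the modulus of convexity, $\delta (r,\epsilon )=\inf \{1-\frac{1}{r}d(a,m)\}$, with the infimum over admissible triples $a,x,y$; it asserts no property, and accordingly the paper supplies no proof of it whatsoever. What you have proved instead is the substantive fact that this modulus is well defined and strictly positive, which is the content one actually needs (it is what makes the convexity hypothesis of Lemma \ref{(lemma2.22)} non-vacuous), and your route --- existence of a comparison triangle in $M_{\kappa }^{2}$ since the perimeter is at most $2r+d(x,y)<2D_{\kappa }$, the $CAT(\kappa )$ inequality $d(a,m)\leq d(\overline{a},\overline{m})$ applied at the midpoint, then spherical trigonometry in the model space --- is the standard way to obtain it. So there is no paper proof to compare against; your proposal answers a question the paper never poses, but answers the right one.

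On the internal correctness of your argument, two genuine gaps. First, the claim that ``by symmetry the infimum is attained at an isosceles configuration'' is not a proof: the triangle in $X$, hence its comparison triangle, need not be isosceles, and extremality of the symmetric configuration requires a monotonicity argument. You can bypass attainment entirely with the spherical median formula: on the unit sphere, $\cos d(\overline{a},\overline{m})=\bigl(\cos d(\overline{a},\overline{x})+\cos d(\overline{a},\overline{y})\bigr)/\bigl(2\cos (d(\overline{x},\overline{y})/2)\bigr)$, so legs $\leq r$ and base $\geq \epsilon r$ give directly $\cos \bigl(\sqrt{\kappa }\,d(\overline{a},\overline{m})\bigr)\geq \cos (\sqrt{\kappa }r)/\cos (\sqrt{\kappa }\epsilon r/2)$, with no extremal triangle needed. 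Second, for a general $CAT(\kappa )$ space your closed form must be an inequality, $\delta (r,\epsilon )\geq 1-\frac{1}{\sqrt{\kappa }r}\arccos \bigl(\cos (\sqrt{\kappa }r)/\cos (\sqrt{\kappa }r\epsilon /2)\bigr)$, with equality only when $X$ is the model sphere itself; writing it as an equality is false (for an $\mathbb{R}$-tree or a Hilbert space the modulus is strictly larger). Finally, note a normalization mismatch with the paper: its definition constrains $\epsilon \leq d(x,y)$, not $\epsilon r\leq d(x,y)$, so your formula should carry $\epsilon /2$ rather than $\epsilon r/2$ to match the statement as written.
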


\begin{equation*}
\delta (r,\in )=\inf \{1-\frac{1}{r}d(a,m)\}\medskip
\end{equation*}%
where the infimum is taken over all points $a,x,y\in X$ satisfying $%
d(a,x)\leq r$, $d(a,y)\leq r$ and $\in \leq d(x,y)<\frac{\pi }{2\sqrt{\kappa 
}}$

\begin{lemma}
\label{(lemma2.22)}\cite{lao} Let $X$ be a complete $CAT(\kappa )$space with
modulus of convexity $\delta (r,\in )$ and let $x\in E$. Suppose that $%
\delta (r,\in )$ increases with $r$ (for a fixed $\in $ ) and suppose $%
\{t_{n}\}$ is a sequence in $[b,c]$ for some $b,c\in (0,1)$, $\{x_{n}\}$ and 
$\{y_{n}\}$ are the sequences in $X$ such that $\limsup\nolimits_{n%
\rightarrow \infty }d(x_{n},x)\leq r,$ $\limsup\nolimits_{n\rightarrow
\infty }d(y_{n},x)\leq r$ and $\lim\nolimits_{n\rightarrow \infty
}d((1-t_{n})x_{n}\oplus t_{n}y_{n},x)=r$ for some $r\geq 0$. Then $%
\lim_{n\rightarrow \infty }d(x_{n},y_{n})=0.$
\end{lemma}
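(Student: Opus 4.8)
The plan is to argue by contradiction, using the modulus of convexity $\delta(r,\epsilon)$ to show that if $d(x_n,y_n)$ fails to vanish then the convex combinations $z_n:=(1-t_n)x_n\oplus t_n y_n$ would be pulled strictly inside the ball about $x$, contradicting $d(z_n,x)\to r$. First I would dispose of the degenerate case $r=0$: here $\limsup d(x_n,x)\le 0$ forces $d(x_n,x)\to 0$ and likewise $d(y_n,x)\to 0$, so $d(x_n,y_n)\le d(x_n,x)+d(x,y_n)\to 0$. Assume then $r>0$, and suppose for contradiction that $d(x_n,y_n)\not\to 0$; after passing to a subsequence (not relabelled) there is $\epsilon>0$ with $d(x_n,y_n)\ge\epsilon$ for all $n$. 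Fix a small $\eta>0$ to be chosen later. Since $\limsup d(x_n,x)\le r$ and $\limsup d(y_n,x)\le r$, for all large $n$ we have $d(x_n,x)\le r+\eta$ and $d(y_n,x)\le r+\eta$; write $r'=r+\eta$ and let $m_n$ be the midpoint of $[x_n,y_n]$.

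The core step is to convert the midpoint estimate supplied by $\delta$ into an estimate for the general convex combination $z_n$. Assuming $t_n\le\tfrac12$ (the case $t_n\ge\tfrac12$ being symmetric, with the roles of $x_n,y_n$ exchanged), one checks from the definition of $\oplus$ that $z_n=(1-2t_n)x_n\oplus 2t_n m_n$, i.e. $z_n$ lies on the geodesic from $x_n$ to $m_n$. Using that $a\mapsto d(x,a)$ is convex along geodesics in a $CAT(\kappa)$ space, together with the midpoint bound $d(x,m_n)\le(1-\delta(r',\epsilon))\,r'$ read off from the definition of $\delta(r',\epsilon)$ applied to the configuration with apex $x$ and endpoints $x_n,y_n$, I would obtain
\[
 d(x,z_n)\le (1-2t_n)\,d(x,x_n)+2t_n\,d(x,m_n)\le r'\bigl(1-2t_n\,\delta(r',\epsilon)\bigr).
\]
Setting $\mu:=\min\{b,1-c\}>0$, so that $\min\{t_n,1-t_n\}\ge\mu$ for every $t_n\in[b,c]$, the symmetric treatment of both cases gives $d(x,z_n)\le r'\bigl(1-2\mu\,\delta(r',\epsilon)\bigr)$ for all large $n$.

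Finally I would invoke the monotonicity hypothesis: since $\delta(\cdot,\epsilon)$ increases in its first argument and $r'=r+\eta>r$, we have $\delta(r',\epsilon)\ge\delta(r,\epsilon)>0$, whence
\[
 d(x,z_n)\le (r+\eta)\bigl(1-2\mu\,\delta(r,\epsilon)\bigr).
\]
Because $\mu>0$, $r>0$ and $\delta(r,\epsilon)>0$, the right-hand side is strictly less than $r$ once $\eta$ is small enough (any $\eta$ with $\eta\,(1-2\mu\,\delta(r,\epsilon))<2\mu\,\delta(r,\epsilon)\,r$ works). Thus $\limsup_{n}d(x,z_n)<r$ along the subsequence, contradicting $\lim_{n}d(z_n,x)=r$. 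This contradiction forces $d(x_n,y_n)\to 0$.

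The main obstacle is the middle step: the modulus $\delta$ is stated only for midpoints, whereas the lemma involves arbitrary convex combinations with coefficients $t_n\in[b,c]$. The decomposition $z_n=(1-2t_n)x_n\oplus 2t_n m_n$ combined with convexity of the distance function is what bridges this gap, and it is precisely here that the hypotheses $b,c\in(0,1)$ (giving $\mu>0$) and the monotonicity of $\delta$ in $r$ (absorbing the error $\eta$) are genuinely used. One must also be careful, throughout, to keep every point within radius $\frac{\pi}{2\sqrt{\kappa}}$, so that uniqueness of geodesics, convexity of $d(x,\cdot)$, and the very definition of $\delta$ all remain available; this is the one place where $\kappa>0$ (rather than $\kappa\le 0$) demands attention.
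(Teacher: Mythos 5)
The paper does not prove this lemma at all: it is imported verbatim from \cite{lao}, so there is no internal proof to compare against. Your blind proof is correct and is essentially the standard argument behind that cited result --- contradiction, disposal of the $r=0$ case, reduction of the general coefficients $t_n\in[b,c]$ to the midpoint case via the decomposition $z_n=(1-2t_n)x_n\oplus 2t_n m_n$ combined with convexity of the metric, and monotonicity of $\delta(\cdot,\epsilon)$ in $r$ to absorb the slack $\eta$ --- the one ingredient you use that is only implicit in the statement being $\delta(r,\epsilon)>0$ (uniform convexity), which, like the radius restrictions you flag, is supplied in this setting by the CAT($\kappa$) hypothesis through $R$-convexity.
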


\begin{lemma}
\label{(lemma2.2)}\cite{panyanac} Let $\kappa $ be an arbitrary positive
real number and $(X,d)$ be a $CAT(\kappa )$ space with $diam(X)<$ $\frac{\pi
-\varepsilon }{2\sqrt{\kappa }}$ for some $\varepsilon \in (0,\frac{\pi }{2}%
).$Then $(X,d)$ is a $R-$\textit{convex space for} $R=(\pi -2\varepsilon
)\tan (\varepsilon ).$
\end{lemma}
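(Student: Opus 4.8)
The plan is to reduce the desired inequality to the corresponding computation in the model space $M_\kappa^2$ and then verify it there by elementary spherical calculus. Fix $x,y,z\in X$ and $\lambda\in[0,1]$ and set $m=(1-\lambda)y\oplus\lambda z$. First I would pass to a comparison triangle $\overline{\Delta}(\overline{x},\overline{y},\overline{z})$ in $M_\kappa^2$, which is legitimate because the diameter hypothesis forces the perimeter of $\Delta(x,y,z)$ to lie below $2D_\kappa$. Let $\overline{m}\in[\overline{y},\overline{z}]$ be the comparison point of $m$, so that $d(\overline{y},\overline{m})=d(y,m)$ and $d(\overline{m},\overline{z})=d(m,z)$. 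By the $CAT(\kappa)$ inequality $d(x,m)\le d(\overline{x},\overline{m})$, while $d(x,y)=d(\overline{x},\overline{y})$, $d(x,z)=d(\overline{x},\overline{z})$ and $d(y,z)=d(\overline{y},\overline{z})$ by construction. Hence it suffices to prove the $R$-convexity inequality for $\overline{x},\overline{y},\overline{z}$ inside $M_\kappa^2$; the bound then transfers verbatim to $X$.

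After rescaling the metric by $\sqrt{\kappa}$ I may assume $\kappa=1$, so that $M_\kappa^2$ is the unit sphere and the diameter hypothesis becomes a uniform bound $\rho_0<\tfrac{\pi}{2}$ on the distances that occur. I would parametrize $[\overline{y},\overline{z}]$ by arclength as $\gamma:[0,L]\to M_1^2$ with $L=d(\overline{y},\overline{z})$ and study $\Phi(s)=d^2(\overline{x},\gamma(s))$. Writing $\rho(s)=d(\overline{x},\gamma(s))$ and letting $\alpha(s)$ be the angle at $\gamma(s)$ between $\gamma'$ and the radial direction toward $\overline{x}$, the Hessian of the squared distance on the sphere yields
\[
\tfrac12\Phi''(s)=\rho(s)\cot\rho(s)\,\sin^2\alpha(s)+\cos^2\alpha(s)=1-\sin^2\alpha(s)\bigl(1-\rho(s)\cot\rho(s)\bigr).
\]
Since $r\mapsto r\cot r$ is decreasing on $(0,\tfrac{\pi}{2})$ with $r\cot r\le 1$, the right-hand side is minimized at $\sin^2\alpha=1$ and at the largest admissible value of $\rho$. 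Because $\rho$ is itself convex along $\gamma$ (its Hessian is $\cot\rho\,\sin^2\alpha\ge 0$ while $\rho<\tfrac{\pi}{2}$), every value $\rho(s)$ is at most $\max\{d(\overline{x},\overline{y}),d(\overline{x},\overline{z})\}<\rho_0$, so $\tfrac12\Phi''(s)\ge\rho_0\cot\rho_0$ uniformly in $s$. Taking $\rho_0=\tfrac{\pi}{2}-\varepsilon$ gives $\rho_0\cot\rho_0=\bigl(\tfrac{\pi}{2}-\varepsilon\bigr)\tan\varepsilon$, that is $\Phi''(s)\ge(\pi-2\varepsilon)\tan\varepsilon=R$.

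Finally I would integrate this second-order estimate: as $\Phi''\ge R$, the map $s\mapsto\Phi(s)-\tfrac{R}{2}s^2$ is convex on $[0,L]$, and applying convexity at $s=\lambda L$ rearranges exactly into
\[
d^2(\overline{x},\overline{m})\le(1-\lambda)d^2(\overline{x},\overline{y})+\lambda d^2(\overline{x},\overline{z})-\tfrac{R}{2}\lambda(1-\lambda)d^2(\overline{y},\overline{z}),
\]
which, combined with the comparison estimates of the first paragraph, gives the claimed inequality in $X$. In a purely metric presentation the Hessian step can be replaced by differentiating the spherical law of cosines $\cos\rho(s)=C\cos(s-s_0)$ twice, which reproduces the same bound.

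The main obstacle is the second paragraph. One must justify the Hessian formula (or its trigonometric substitute) and, crucially, confirm that the diameter hypothesis keeps \emph{all} relevant distances --- including $d(\overline{x},\gamma(s))$ at interior points of the geodesic, not merely at the vertices --- strictly below $\tfrac{\pi}{2\sqrt{\kappa}}$, so that $\cot$ stays positive and $r\cot r$ attains its infimum precisely at the diameter bound. This step produces the constant and pins down the exact role of $\varepsilon$: the value $R=(\pi-2\varepsilon)\tan\varepsilon$ emerges with $\rho_0=\tfrac{\pi}{2}-\varepsilon$, i.e. under $\mathrm{diam}(X)<\tfrac{\pi-2\varepsilon}{2\sqrt{\kappa}}$. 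I would therefore recheck the hypothesis as stated, since the factor $\pi-2\varepsilon$ rather than $\pi-\varepsilon$ in the diameter bound is what makes the stated $R$ sharp.
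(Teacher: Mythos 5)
The paper itself offers no proof of this lemma: it is quoted from the literature, and in fact the citation \cite{panyanac} appears to be misdirected (that paper concerns Banach spaces; the result is Proposition 3.1 of Ohta \cite{ohta}, also quoted in \cite{liwan}). So your proposal has to stand on its own, and its skeleton is the standard proof and is sound: pass to a comparison triangle in $M_{\kappa }^{2}$ (legitimate, since the diameter bound controls the perimeter), establish the uniform lower bound $\tfrac{1}{2}\Phi ''(s)\geq \rho _{0}\cot \rho _{0}$ for $\Phi (s)=d^{2}(\overline{x},\gamma (s))$ by spherical trigonometry, and integrate; the $CAT(\kappa )$ inequality then transfers the conclusion back to $X$. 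One small repair: to bound the interior values $\rho (s)$ you appeal to convexity of $\rho $ along $\gamma $, but the justification (nonnegativity of its Hessian) already presupposes $\rho <\pi /2$ along all of $\gamma $, which is what you are trying to establish. The circularity is avoided by instead invoking convexity of balls of radius smaller than $\frac{\pi }{2\sqrt{\kappa }}$ in the model space (Proposition \ref{(prop2.3)}), which keeps $\gamma $ inside the closed ball about $\overline{x}$ of radius $\max \{d(\overline{x},\overline{y}),d(\overline{x},\overline{z})\}$.

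Your closing caveat is exactly right, and it is the substantive point. With the hypothesis as printed, $diam(X)<\frac{\pi -\varepsilon }{2\sqrt{\kappa }}$, your computation only yields $\rho _{0}=\frac{\pi -\varepsilon }{2}$ (after normalizing $\kappa =1$), hence $R=2\rho _{0}\cot \rho _{0}=(\pi -\varepsilon )\tan (\varepsilon /2)$, which is strictly smaller than the asserted $(\pi -2\varepsilon )\tan (\varepsilon )$ for every $\varepsilon \in (0,\pi /2)$. Moreover $2\rho \cot \rho $ is also an upper bound for any admissible $R$ (test the defining inequality with $y,z$ on a short geodesic through a point $w$ orthogonal to $[x,w]$ with $d(x,w)=\rho $, and let the geodesic shrink; such a configuration embeds in a convex spherical triangle of diameter barely exceeding $\rho $), so the lemma as literally stated is false, not merely unproved by your method: the claimed constant requires the stronger hypothesis $diam(X)\leq \frac{\pi -2\varepsilon }{2\sqrt{\kappa }}=\frac{\pi /2-\varepsilon }{\sqrt{\kappa }}$, which is how the result is stated in the sources. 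The defect therefore lies in the paper's transcription of the hypothesis (a typo of $\pi -\varepsilon $ for $\pi -2\varepsilon $, which propagates into the theorems that use this lemma); with that correction your argument is complete and correct.
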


\begin{proposition}
\label{(prop2.3)} \cite{brid} $M_{\kappa }^{n}$ is a geodesic metric space.
If $\kappa \leq 0$, then $M_{\kappa }^{n}$ is uniquely geodesic and all
balls in $M_{\kappa }^{n}$ are convex. If \ $\kappa >0$, then there is a
unique geodesic segment joining x, y in $M_{\kappa }^{n}$ \ if and only if $%
d(x;y)<\frac{\pi }{\sqrt{\kappa }}$. If $\kappa >0$ , closed balls in $%
M_{\kappa }^{n}$ of radius smaller than $\frac{\pi }{2\sqrt{\kappa }}$ are
convex.
\end{proposition}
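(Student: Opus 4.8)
The plan is to reduce every assertion to the three base model spaces and then argue geometrically in each. First I would record that, by its construction in Definition \ref{(def.2.1)}, $M_{\kappa}^{n}$ is isometric to $E^{n}$ when $\kappa=0$, to the round sphere $S^{n}$ with its metric rescaled by $1/\sqrt{\kappa}$ when $\kappa>0$, and to $H^{n}$ with its metric rescaled by $1/\sqrt{-\kappa}$ when $\kappa<0$. Multiplying a metric by a fixed positive constant is a homothety: it sends geodesics to geodesics (after reparametrizing by arclength), hence preserves existence and uniqueness of geodesic segments as well as convexity of subsets, while scaling all distances and radii by the same factor. Consequently it suffices to prove each claim for the unrescaled models $E^{n}$, $S^{n}$ (radius $1$) and $H^{n}$, where the thresholds to establish are $\pi$ and $\pi/2$; transferring through the factor $1/\sqrt{\kappa}$ then produces $\pi/\sqrt{\kappa}$ and $\pi/(2\sqrt{\kappa})$.

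For the existence part I would simply exhibit geodesics in each base model: straight-line segments in $E^{n}$, great-circle arcs on $S^{n}$, and the standard hyperbolic segments in $H^{n}$. Since any two points admit such a connecting path, each model, and therefore $M_{\kappa}^{n}$, is a geodesic metric space. For $\kappa\le 0$ the spaces $E^{n}$ and $H^{n}$ are complete simply connected manifolds of nonpositive curvature, and I would invoke the two standard facts that such spaces are uniquely geodesic and that $q\mapsto d(p,q)$ is convex along geodesics. Unique geodesics give the first clause, and convexity of the distance function makes every closed ball a sublevel set of a convex function, hence convex; this settles the case $\kappa\le 0$.

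The positive-curvature case is the substance. On $S^{n}$ the geodesics are great-circle arcs, and two points $x,y$ with $d(x,y)<\pi$ are joined by a unique minimizing arc (the minor arc of the unique great circle through them), whereas two antipodal points with $d(x,y)=\pi$ are joined by infinitely many great semicircles; this gives uniqueness exactly for $d(x,y)<\pi$, i.e.\ for $d(x,y)<\pi/\sqrt{\kappa}$ after rescaling. For convexity of a closed ball $\bar B(p,r)$ with $r<\pi/2$, I would embed $S^{n}\subset\mathbb{R}^{n+1}$ and study $f(q)=\langle p,q\rangle=\cos d(p,q)$. Along a unit-speed great circle $\gamma$ one computes $f(\gamma(t))=A\cos t+B\sin t$, so $f''=-f$. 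The key point is that $\bar B(p,r)$ lies in the open hemisphere $H_{p}=\{q:\langle p,q\rangle>0\}=\{q:d(p,q)<\pi/2\}$; a zero-analysis of the sinusoid $f(\gamma(t))$ (its positivity set is a union of open intervals of length $\pi$ separated by gaps of length $\pi$) shows that a minimizing arc of length $<\pi$ with both endpoints in $H_{p}$ stays in $H_{p}$, so $H_{p}$ is geodesically convex. On $H_{p}$ we have $f>0$, hence $f''=-f<0$ and $f$ is concave along geodesics, attaining its minimum over any arc at the endpoints. Thus for $x,y\in\bar B(p,r)$ the connecting arc lies in $H_{p}$ and there $f\ge\min\{f(x),f(y)\}\ge\cos r$, i.e.\ $d(p,\cdot)\le r$ throughout, so the ball is convex; rescaling turns $r<\pi/2$ into $r<\pi/(2\sqrt{\kappa})$.

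I expect the main obstacle to be precisely this last step, namely controlling $d(p,\cdot)$ along great-circle arcs through the oscillatory solutions of $f''=-f$ and confirming the hemisphere confinement, since all remaining assertions either reduce to the homothety bookkeeping of the first step or follow from well-known properties of Euclidean and nonpositively curved geometry.
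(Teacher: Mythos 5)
Your argument is mathematically sound, but note that the paper offers no proof of this proposition at all: it is quoted verbatim from Bridson--Haefliger \cite{brid} as a known property of the model spaces $M_{\kappa}^{n}$, so any self-contained verification is necessarily a different route from the paper's. Compared with the standard reference proof, which derives uniqueness of geodesics and convexity of balls from the spherical law of cosines, your treatment of the case $\kappa>0$ --- writing $f(q)=\langle p,q\rangle=\cos d(p,q)$ along a unit-speed great circle as a sinusoid $C\cos(t-\phi)$, using the length-$\pi$ structure of its positivity intervals to confine a minimizing arc with endpoints in the open hemisphere to that hemisphere, and then exploiting concavity of $f$ there --- is essentially the same computation packaged as a linear ODE argument, and it correctly yields both the uniqueness threshold $\pi/\sqrt{\kappa}$ and the convexity threshold $\pi/(2\sqrt{\kappa})$ after the homothety bookkeeping. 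Two small points would tighten it. First, in the uniqueness claim you should state explicitly that a non-minimizing great-circle arc (the major arc) is not a geodesic segment in the metric sense used here --- it is not an isometric embedding of an interval --- so that for non-antipodal points the minor arc really is the \emph{unique} geodesic segment, while antipodal points admit infinitely many. Second, for $\kappa<0$ you invoke Cartan--Hadamard and convexity of the distance function in nonpositive curvature, which is heavier machinery than the situation requires; the hyperboloid model of $H^{n}$ admits the same elementary argument with $\cosh d(p,\cdot)$ (satisfying $f''=f$, hence convex along geodesics) in place of $\cos d(p,\cdot)$, which would make your proof uniform across all three models and independent of Riemannian comparison theory.
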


\begin{proposition}
\label{(prop2.4)} \cite{brid} Let $X$ be $CAT(\kappa )$ space. Then any ball
of radius smaller than $\frac{\pi }{2\sqrt{\kappa }}$ are convex.
\end{proposition}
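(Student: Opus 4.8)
The plan is to reduce the statement to the corresponding fact about the model space $M_{\kappa }^{2}$, which is already recorded in Proposition \ref{(prop2.3)}, by transporting the configuration through a comparison triangle. Fix a center $c\in X$ and a radius $r<\frac{\pi }{2\sqrt{\kappa }}$, and let $x,y$ be two points of the ball $B(c,r)$, so that $d(c,x)\leq r$ and $d(c,y)\leq r$. To prove convexity I must show that every point $z$ lying on a geodesic segment $[x,y]$ again satisfies $d(c,z)\leq r$, whence $z\in B(c,r)$.

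First I would check that the data define an admissible geodesic triangle. Since $d(c,x)\leq r$ and $d(c,y)\leq r$ with $r<\frac{\pi }{2\sqrt{\kappa }}$, the triangle inequality gives $d(x,y)\leq d(x,c)+d(c,y)<\frac{\pi }{\sqrt{\kappa }}=D_{\kappa }$, so a geodesic $[x,y]$ exists, and the perimeter satisfies $d(c,x)+d(x,y)+d(y,c)<\frac{\pi }{2\sqrt{\kappa }}+\frac{\pi }{\sqrt{\kappa }}+\frac{\pi }{2\sqrt{\kappa }}=2D_{\kappa }$. Hence a comparison triangle $\overline{\Delta }(\overline{c},\overline{x},\overline{y})$ exists in $M_{\kappa }^{2}$, with $d(\overline{c},\overline{x})=d(c,x)\leq r$ and $d(\overline{c},\overline{y})=d(c,y)\leq r$.

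Next, writing $z=(1-t)x\oplus ty$ for some $t\in \lbrack 0,1]$, I take its comparison point $\overline{z}\in \lbrack \overline{x},\overline{y}]$, which satisfies $d(\overline{x},\overline{z})=t\,d(\overline{x},\overline{y})=t\,d(x,y)=d(x,z)$. The $CAT(\kappa )$ inequality for the admissible triangle $\Delta (c,x,y)$ then yields $d(c,z)\leq d(\overline{c},\overline{z})$, so it suffices to bound the right-hand side in the model space. But $\overline{x},\overline{y}$ lie in the closed ball $\overline{B}(\overline{c},r)$ of $M_{\kappa }^{2}$, and by Proposition \ref{(prop2.3)} such a ball is convex because $r<\frac{\pi }{2\sqrt{\kappa }}$; consequently $\overline{z}\in \overline{B}(\overline{c},r)$, i.e. $d(\overline{c},\overline{z})\leq r$. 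Combining the two estimates gives $d(c,z)\leq r$, so $z\in B(c,r)$ and the ball is convex. The same argument with strict inequalities throughout covers the open-ball case.

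I expect the only genuine work to be the bookkeeping that guarantees the comparison triangle actually exists, namely the perimeter bound $<2D_{\kappa }$ together with the distance bound $d(x,y)<D_{\kappa }$ needed for a geodesic; once these are in place the argument is a direct transfer through the comparison inequality. The convexity of balls in the model space is the real geometric content, and this is supplied externally by Proposition \ref{(prop2.3)}, so the proof here is essentially the comparison-geometry reduction rather than any new estimate.
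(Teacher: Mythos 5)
Your proof is correct. The paper states this proposition without proof, simply citing \cite{brid}, and your argument --- building the comparison triangle $\overline{\Delta }(\overline{c},\overline{x},\overline{y})$ in $M_{\kappa }^{2}$ after verifying the perimeter bound $<2D_{\kappa }$, applying the $CAT(\kappa )$ inequality to get $d(c,z)\leq d(\overline{c},\overline{z})$, and invoking convexity of small balls in the model space (Proposition \ref{(prop2.3)}) --- is exactly the standard proof given in that reference, with the bookkeeping (existence of the geodesic $[x,y]$, the comparison point, and the open-ball variant) handled correctly.
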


\begin{proposition}
\label{(prop2.5)} Exercise 2.3(1) in \cite{brid} Let $\kappa >0$ and $(X,d)$
be a $CAT(\kappa )$ space with $diam(X)<\frac{D_{\kappa }}{2}=\frac{\pi }{2%
\sqrt{\kappa }}$ Then, for any$x,y,z\in X$ and $t\in \lbrack 0,1]$, we have%
\begin{equation*}
d((1-t)x\oplus ty,z)\leq (1-t)d(x,z)+td(y,z).
\end{equation*}
\end{proposition}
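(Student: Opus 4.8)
The plan is to reduce the statement to an inequality in the model space $M_{\kappa}^{2}$ and then verify that inequality by a direct convexity computation on the sphere; this is exactly the route indicated by the reference to Exercise 2.3(1) in \cite{brid}. First I would observe that because $\mathrm{diam}(X)<\frac{\pi}{2\sqrt{\kappa}}$, the geodesic triangle $\Delta(x,y,z)$ has perimeter strictly less than $\frac{3\pi}{2\sqrt{\kappa}}<\frac{2\pi}{\sqrt{\kappa}}=2D_{\kappa}$, so a comparison triangle $\overline{\Delta}(\overline{x},\overline{y},\overline{z})$ exists in $M_{\kappa}^{2}$. Moreover $d(x,y)<D_{\kappa}$, so the geodesic $[x,y]$ is unique and $m:=(1-t)x\oplus ty$ is well defined; let $\overline{m}\in[\overline{x},\overline{y}]$ be its comparison point. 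By the $CAT(\kappa)$ inequality $d(m,z)\leq d(\overline{m},\overline{z})$, while the defining equalities of the comparison triangle give $d(x,z)=d(\overline{x},\overline{z})$ and $d(y,z)=d(\overline{y},\overline{z})$. Hence it suffices to prove the inequality in $M_{\kappa}^{2}$, namely
\begin{equation*}
d(\overline{m},\overline{z})\leq (1-t)d(\overline{x},\overline{z})+t\,d(\overline{y},\overline{z}).
\end{equation*}

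Next I would normalize. Since $M_{\kappa}^{2}$ is the sphere $S^{2}$ with the distance rescaled by $\frac{1}{\sqrt{\kappa}}$, after multiplying all distances by $\sqrt{\kappa}$ it is enough to treat the unit sphere, where every pairwise distance among $\overline{x},\overline{y},\overline{z}$ is less than $\frac{\pi}{2}$. Writing $r=\max\{d(\overline{x},\overline{z}),d(\overline{y},\overline{z})\}<\frac{\pi}{2}$, the points $\overline{x},\overline{y}$ lie in the closed ball $\overline{B}(\overline{z},r)$, which is convex by Proposition \ref{(prop2.3)} since $r<\frac{\pi}{2}$. Consequently the whole segment $[\overline{x},\overline{y}]$ stays inside this ball, so if $\gamma:[0,L]\to S^{2}$ is the unit-speed geodesic from $\overline{x}$ to $\overline{y}$ (with $L=d(\overline{x},\overline{y})$ and $\overline{m}=\gamma(tL)$) then $g(s):=d(\gamma(s),\overline{z})\leq r<\frac{\pi}{2}$ for every $s\in[0,L]$.

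The core of the argument is to show that $g$ is convex on $[0,L]$. Extending $\gamma$ to the full great circle and letting $a$ be the distance from $\overline{z}$ to that circle, the spherical Pythagorean relation gives $\cos g(s)=\cos a\,\cos(s-s_{0})$, where $s_{0}$ is the parameter of the foot of the perpendicular from $\overline{z}$. Differentiating this relation twice and simplifying yields $g''(s)=\dfrac{\sin^{2}a\,\cos g(s)}{\sin^{3}g(s)}$, which is nonnegative precisely when $\cos g(s)\geq 0$, i.e.\ when $g(s)\leq\frac{\pi}{2}$. Since we have arranged $g(s)<\frac{\pi}{2}$ on all of $[0,L]$, the function $g$ is convex there, and therefore
\begin{equation*}
d(\overline{m},\overline{z})=g(tL)\leq (1-t)g(0)+t\,g(L)=(1-t)d(\overline{x},\overline{z})+t\,d(\overline{y},\overline{z}),
\end{equation*}
which together with the reduction of the first paragraph proves the proposition. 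The main obstacle is exactly this last step: the distance function on the sphere is convex only inside the open hemisphere $g<\frac{\pi}{2}$, and its convexity fails once $g$ exceeds $\frac{\pi}{2}$. This is why the convexity of balls from Proposition \ref{(prop2.3)} together with the diameter hypothesis $\mathrm{diam}(X)<\frac{\pi}{2\sqrt{\kappa}}$ are indispensable: they are precisely what keeps the entire geodesic segment in the region where $g''\geq 0$.
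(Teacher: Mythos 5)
Your proposal is correct, but there is no proof in the paper to compare it with: the paper states this proposition purely as a citation (Exercise 2.3(1) of Bridson--Haefliger \cite{brid}) and supplies no argument of its own, so your write-up is in effect a solution to that exercise rather than an alternative to anything in the text. Your solution is sound. The reduction is right: the diameter bound gives perimeter $<\frac{3\pi}{2\sqrt{\kappa}}<2D_{\kappa}$, so the comparison triangle exists, and the $CAT(\kappa)$ inequality together with $d(x,z)=d(\overline{x},\overline{z})$, $d(y,z)=d(\overline{y},\overline{z})$ reduces everything to the model space. The spherical computation also checks out: with $\cos g(s)=\cos a\,\cos (s-s_{0})$ one indeed gets $g''(s)=\sin ^{2}a\,\cos g(s)/\sin ^{3}g(s)$, and the containment of $[\overline{x},\overline{y}]$ in the convex ball $\overline{B}(\overline{z},r)$, $r<\frac{\pi }{2}$ (your appeal to Proposition \ref{(prop2.3)}), keeps $g$ in the region where this is nonnegative. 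Two small points to polish: first, the formula for $g''$ presupposes $\sin g>0$, so the degenerate case $a=0$ (when $\overline{z}$ lies on the great circle through $\overline{x}$ and $\overline{y}$) should be noted separately --- there $g(s)=|s-s_{0}|$, which is convex anyway; second, you should say explicitly that $d(\overline{x},\overline{m})=d(x,m)=t\,d(x,y)=tL$ is what identifies the comparison point $\overline{m}$ with $\gamma (tL)$, since that identification is what makes the final convexity display deliver the stated inequality.
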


\begin{proposition}
\label{(prop2.6)}$%
\mathbb{R}
$-trees are particular class of $CAT(\kappa )$ spaces for any real number $%
\kappa $ (see p.167 in \cite{brid}) and family of closed convex subsets of a 
$CAT(\kappa )$ spaces has uniform normal structure in usual metric sense.

\begin{definition}
\label{(def2.7)}\cite{brid} An $%
\mathbb{R}
$-tree is a metric space $X$ such that
\end{definition}
\end{proposition}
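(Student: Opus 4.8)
The plan is to handle the two assertions of the proposition separately, since the first is a statement in comparison geometry whereas the second is a uniform-convexity statement. For the $\mathbb{R}$-tree assertion I would start from the defining feature of an $\mathbb{R}$-tree $X$: it is uniquely geodesic and $0$-hyperbolic, so that every geodesic triangle $\Delta(x,y,z)$ is a \emph{tripod}. Concretely the three arcs $[x,y],[y,z],[z,x]$ share a common median point $c$, and each side is the concatenation of two segments issuing from $c$. Fixing an arbitrary real $\kappa$ and a comparison triangle $\overline{\Delta}(\overline{x},\overline{y},\overline{z})$ in $M^2_\kappa$ with the same side lengths, I would verify $d(p,q)\le d(\overline{p},\overline{q})$ for all $p,q$ on $\Delta(x,y,z)$. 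The point is that a tripod realizes the smallest possible distances between points placed at prescribed positions along the sides among all triangles with the given side lengths, while $M^2_\kappa$ supplies a genuine, non-degenerate triangle; hence the comparison distances can only be larger. The mechanical verification reduces to the law of cosines in $M^2_\kappa$ and is exactly the computation in \cite{brid}, p.167, which I would cite for the general statement; since $\kappa$ is arbitrary, $X$ is $CAT(\kappa)$ for every $\kappa$.

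For the uniform normal structure assertion I would exploit the uniform convexity already recorded in the preliminaries. By Lemma~\ref{(lemma2.2)}, a $CAT(\kappa)$ space $X$ with $\mathrm{diam}(X)<\frac{\pi-\varepsilon}{2\sqrt{\kappa}}$ is $R$-convex with $R=(\pi-2\varepsilon)\tan(\varepsilon)>0$. Let $C$ be a bounded closed convex subset with $\mathrm{diam}(C)=d_{0}>0$, pick $x,y\in C$ with $d(x,y)$ arbitrarily close to $d_{0}$, and let $m=\frac{1}{2}x\oplus\frac{1}{2}y$, which lies in $C$ by convexity. Applying the $R$-convexity inequality with $\lambda=\frac{1}{2}$ to an arbitrary $a\in C$ gives $d^{2}(a,m)\le\frac{1}{2}d^{2}(a,x)+\frac{1}{2}d^{2}(a,y)-\frac{R}{8}d^{2}(x,y)\le d_{0}^{2}\left(1-\frac{R}{8}\right)+o(1)$ as $d(x,y)\to d_{0}$, using $d(a,x),d(a,y)\le d_{0}$. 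Taking the supremum over $a\in C$ bounds the Chebyshev radius by $r(C)\le d_{0}\sqrt{1-R/8}$, so the normal structure coefficient is at least $1/\sqrt{1-R/8}>1$, a constant independent of $C$. That is precisely uniform normal structure in the metric sense.

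The tripod case analysis is routine once $c$ is identified, so I expect no real difficulty there. The delicate point is the second assertion: I must keep the ambient diameter restriction in force so that $R$-convexity with a \emph{fixed} positive $R$ is available, and then check that $\sqrt{1-R/8}$ is one constant serving all admissible $C$ simultaneously, together with the limiting argument handling the case where $\mathrm{diam}(C)$ is only approached rather than attained. Choosing $\varepsilon$, hence $R$, from the ambient bound and verifying this uniformity is where the main work lies.
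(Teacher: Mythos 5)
The paper does not actually prove this proposition: it is recorded as a background fact, with the $\mathbb{R}$-tree claim delegated to the citation (p.~167 of \cite{brid}) and the uniform normal structure claim left unjustified (the statement even has Definition \ref{(def2.7)} nested inside it). So your proposal cannot be compared to a proof in the paper; what it does is supply one, and it is essentially correct. Your tripod argument for the first claim is exactly the argument the citation points to; note only that the key comparison (a point on each of two legs of the tripod versus the comparison points in $M^2_\kappa$) follows already from the triangle inequality in the comparison triangle rather than from a law-of-cosines computation, and that for $\kappa>0$ the CAT($\kappa$) condition only concerns triangles of perimeter less than $2D_\kappa$, for which comparison triangles exist, so no case is missed. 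Your second argument, deriving $r(C)\le \operatorname{diam}(C)\sqrt{1-R/8}$ by applying the $R$-convexity of Lemma \ref{(lemma2.2)} to midpoints of nearly-diametral pairs of $C$ and letting the approximation tend to $\operatorname{diam}(C)$, is sound and has the merit of using only tools already in the paper's preliminaries, yielding the uniform constant $\sqrt{1-R/8}<1$ valid for every closed bounded convex $C$ simultaneously. Your closing caveat is in fact the crux rather than a technicality: without a diameter restriction the proposition as literally stated is false for $\kappa>0$ --- the round sphere $S^{2}$ is a CAT($1$) space which is a closed convex subset of itself with Chebyshev radius equal to its diameter $\pi$, so it has no normal structure at all --- hence the standing hypothesis $\operatorname{diam}(X)\le\frac{\pi-\varepsilon}{2\sqrt{\kappa}}$, used everywhere else in the paper, must be read into this proposition, exactly as you do.
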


\begin{enumerate}
\item[i)] it is a uniquely geodesic metric space,

\item[ii)] if $x,y,z\in X$ are such that $[y,x]\cap \lbrack x;z]=\{x\},$
then $[y,x]\cup \lbrack x,z]=[y,z].$
\end{enumerate}

Let $\{x_{n}\}$ be a bounded sequence in a $CAT(\kappa )$ space $X$ and $%
x\in X$. $\ $Then, with setting%
\begin{equation*}
r(x,\{x_{n}\})=\limsup_{n\rightarrow \infty }d(x,x_{n})
\end{equation*}

the asymptotic radius of $\{x_{n}\}$ is defined by%
\begin{equation*}
r(\{x_{n}\})=\inf \{r(x,\{x_{n}\});x\in X.\},
\end{equation*}

the asymptotic radius of $\{x_{n}\}$ with respect to $K\subseteq X$ is
defined by%
\begin{equation*}
r_{K}(\{x_{n}\})=\inf \{r(x,\{x_{n}\});x\in K.\}
\end{equation*}

and the asymptotic center of $\{x_{n}\}$ is defined by%
\begin{equation*}
A(\{x_{n}\})=\{x\in X:r(x,\{x_{n}\})=r(\{x_{n}\})\}.
\end{equation*}

and let $\omega _{w}(x_{n}):=\cup A(\{x_{n}\})$ where union is taken on all
subsequences of $\{x_{n}\}.$

\begin{definition}
\label{(def2.8)}\cite{esp}A sequence $\{x_{n}\}\subset X$ \ is said to be $%
\Delta -$ convergent to $x\in X$ if \ $x$ is the unique asymptotic center of
\ all subsequence $\{u_{n}\}$ of $\{x_{n}\}$. In this case we write $\Delta
-\lim_{n}x_{n}=x$ and read as x is $\Delta -$limit of $\{x_{n}\}.$
\end{definition}

\begin{proposition}
\label{(prop2.9)}\cite{esp}Let $X$ be a complete $CAT(\kappa )$ space, $%
K\subseteq X$ nonempty, closed and convex, $\{x_{n}\}$ is a sequence in $X$.
If $r_{C}(\{x_{n}\})$ $<$ $\frac{\pi }{2\sqrt{\kappa }}$ then $%
A_{C}(\{x_{n}\})$ consist exactly one point.
\end{proposition}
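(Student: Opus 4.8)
The plan is to argue by contradiction, exploiting the uniform convexity of $CAT(\kappa)$ spaces on balls of radius smaller than $\frac{\pi}{2\sqrt{\kappa}}$. Throughout I write $C=K$, set $r:=r_{K}(\{x_{n}\})$ and $f(x):=r(x,\{x_{n}\})=\limsup_{n}d(x,x_{n})$, so that $f$ is continuous on $X$ and $r=\inf_{x\in K}f(x)<\frac{\pi}{2\sqrt{\kappa}}$ by hypothesis. Since the statement asserts that $A_{K}(\{x_{n}\})$ consists of exactly one point, I first establish that it is nonempty and then that it cannot contain two distinct points; both steps run on the same uniform-convexity estimate.

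For nonemptiness I would take a minimizing sequence $\{c_{k}\}\subset K$ with $f(c_{k})\to r$. Fixing $\rho$ with $r<\rho<\frac{\pi}{2\sqrt{\kappa}}$, for all large indices the points $x_{n}$, $c_{j}$, $c_{k}$ lie in a ball of radius $\rho$, which is convex by Proposition \ref{(prop2.4)}; applying the modulus-of-convexity estimate of \cite{rash} with apex $x_{n}$ and base $[c_{j},c_{k}]$ and then taking $\limsup_{n}$ bounds $f$ at the midpoint of $[c_{j},c_{k}]$ by $(1-\delta(\rho,d(c_{j},c_{k})))\rho$. Because $f\geq r$ on $K$ and $\delta(\rho,\cdot)$ is positive away from $0$, this forces $d(c_{j},c_{k})\to 0$, so $\{c_{k}\}$ is Cauchy; its limit lies in $K$ (closed in the complete space $X$) and realizes $f=r$ by continuity, whence $A_{K}(\{x_{n}\})\neq\varnothing$.

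For uniqueness, suppose $a,b\in A_{K}(\{x_{n}\})$ with $a\neq b$, so $f(a)=f(b)=r$, and let $m=\tfrac{1}{2}a\oplus\tfrac{1}{2}b$, which lies in $K$ by convexity (and exists as a genuine midpoint since $d(a,b)\leq 2r<\frac{\pi}{\sqrt{\kappa}}=D_{\kappa}$ by Proposition \ref{(prop2.3)}). Choosing again $\rho\in(r,\frac{\pi}{2\sqrt{\kappa}})$, for all sufficiently large $n$ one has $d(x_{n},a)\leq\rho$ and $d(x_{n},b)\leq\rho$, so $a,b\in\overline{B}(x_{n},\rho)$, a convex ball. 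The modulus-of-convexity estimate of \cite{rash} with apex $x_{n}$ and base $[a,b]$ yields $d(x_{n},m)\leq(1-\delta(\rho,d(a,b)))\rho$; taking $\limsup_{n}$ gives $f(m)\leq(1-\delta(\rho,d(a,b)))\rho$, and letting $\rho\downarrow r$ gives $f(m)\leq(1-\delta(r,d(a,b)))r<r$ because $d(a,b)>0$. This contradicts $f(m)\geq r_{K}(\{x_{n}\})=r$, so $a=b$. Alternatively one may package this last step by invoking Lemma \ref{(lemma2.22)} with the constant sequences $x_{n}\equiv a$, $y_{n}\equiv b$ and $t_{n}\equiv\tfrac{1}{2}$.

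The main obstacle is making the uniform-convexity step rigorous in the regime $\kappa>0$: unlike the $CAT(0)$ case, the modulus $\delta(\rho,\varepsilon)$ degenerates as $\rho\uparrow\frac{\pi}{2\sqrt{\kappa}}$, so the argument succeeds only because the strict inequality $r<\frac{\pi}{2\sqrt{\kappa}}$ allows me to trap all the relevant points — the $x_{n}$ for large $n$ together with $a$, $b$ and the midpoint $m$ — inside a single convex ball of radius $\rho<\frac{\pi}{2\sqrt{\kappa}}$ on which $\delta(\rho,\cdot)$ is strictly positive. Extra care is needed because $d(a,b)$ can a priori be as large as $2\rho$, so one must confirm that the separation $d(a,b)$ still falls in the range where $\delta$ is positive; this is precisely where Proposition \ref{(prop2.3)}, Proposition \ref{(prop2.4)} and the hypothesis $r_{C}(\{x_{n}\})<\frac{\pi}{2\sqrt{\kappa}}$ are indispensable.
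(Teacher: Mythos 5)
First, a point of reference: the paper does not prove this proposition at all — it is quoted from Espínola and Fernández-León \cite{esp} — so there is no internal proof to compare yours against. Your overall strategy (existence via a minimizing sequence made Cauchy by uniform convexity, uniqueness via the midpoint of two putative centers) is the standard route for this result. However, as written, your argument has a genuine gap at exactly the point you flag and then wave away.

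The modulus $\delta(\rho,\varepsilon)$ quoted from \cite{rash} is defined only over configurations $a,x,y$ whose base satisfies $\varepsilon\leq d(x,y)<\frac{\pi}{2\sqrt{\kappa}}$. Your key inequality $d(x_{n},m)\leq(1-\delta(\rho,d(a,b)))\rho$ is therefore licensed only when $d(a,b)<\frac{\pi}{2\sqrt{\kappa}}$, and nothing in your argument guarantees this: the hypothesis only gives $d(a,b)\leq 2r$, which exceeds $\frac{\pi}{2\sqrt{\kappa}}$ whenever $r>\frac{\pi}{4\sqrt{\kappa}}$. Propositions \ref{(prop2.3)} and \ref{(prop2.4)}, which you invoke to cover this, concern uniqueness of geodesics and convexity of balls; they say nothing about the admissible range of the modulus, so the uniqueness step genuinely fails in the regime $\frac{\pi}{4\sqrt{\kappa}}<r<\frac{\pi}{2\sqrt{\kappa}}$, and the same defect recurs with $d(c_{j},c_{k})$ in your existence step. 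To close it one needs a convexity estimate valid for all base lengths up to $\frac{\pi}{\sqrt{\kappa}}$, e.g.\ the spherical comparison for the median, $\cos(\sqrt{\kappa}\,d(x,m))\cos(\sqrt{\kappa}\,d(a,b)/2)\geq\frac{1}{2}\left[\cos(\sqrt{\kappa}\,d(x,a))+\cos(\sqrt{\kappa}\,d(x,b))\right]$, which bounds $\limsup_{n}d(x_{n},m)$ strictly below $r$ whenever $a\neq b$, no matter how large $d(a,b)$ is. A second, smaller gap: the step ``letting $\rho\downarrow r$ gives $f(m)\leq(1-\delta(r,d(a,b)))r$'' needs continuity or monotonicity of $\delta$ in its first argument, together with positivity of $\delta(r,\cdot)$ away from $0$; neither is established here or in the paper — indeed the paper's Lemma \ref{(lemma2.22)} must \emph{assume} that $\delta(r,\varepsilon)$ increases with $r$. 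For a fixed $\rho>r$ you only obtain $r\leq(1-\delta(\rho,d(a,b)))\rho$, which by itself is not a contradiction.
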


\begin{lemma}
\label{(lemma2.10)}\cite{dogh2}
\end{lemma}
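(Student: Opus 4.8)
The statement in question is the standard asymptotic--center uniqueness lemma: if $\{x_n\}$ is a bounded sequence in the complete $CAT(\kappa)$ space $X$ (under the standing diameter restriction $diam(X)<\frac{\pi}{2\sqrt{\kappa}}$) whose asymptotic center is the single point $A(\{x_n\})=\{x\}$, and if $\{u_n\}$ is a subsequence with $A(\{u_n\})=\{u\}$ for which $\lim_{n}d(x_n,u)$ exists, then $x=u$. My plan is to derive this directly from the uniform convexity of the space, used in the guise of the $R$-convexity inequality of Lemma \ref{(lemma2.2)}, after first recording the order relations among the relevant asymptotic radii.

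First I would fix the notation $r:=r(\{x_n\})=\limsup_{n}d(x_n,x)$ and $L:=\lim_{n}d(x_n,u)$, the latter existing by hypothesis. Three elementary comparisons carry out the bookkeeping. Since $x$ realizes the asymptotic radius of $\{x_n\}$, minimality gives $r\le\limsup_n d(x_n,u)=L$. Since $\{u_n\}$ is extracted from a sequence whose distance to $u$ converges, $\limsup_n d(u_n,u)=L$. Finally, passing to the subsequence can only decrease the limit superior of the distance to $x$, so $\limsup_n d(u_n,x)\le\limsup_n d(x_n,x)=r$.

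Next I would form the midpoint $m=\tfrac12 x\oplus\tfrac12 u$, which is well defined because the diameter hypothesis forces a unique geodesic segment $[x,u]$ by Proposition \ref{(prop2.3)}. Applying the $R$-convexity inequality of Lemma \ref{(lemma2.2)} with $\lambda=\tfrac12$ to the triple $u_n,x,u$ yields
\begin{equation*}
d^{2}(u_n,m)\le\tfrac12 d^{2}(u_n,x)+\tfrac12 d^{2}(u_n,u)-\tfrac{R}{8}\,d^{2}(x,u).
\end{equation*}
Taking $\limsup_{n\to\infty}$ and substituting the three comparisons above bounds the right-hand side by $\tfrac12 r^{2}+\tfrac12 L^{2}-\tfrac{R}{8}d^{2}(x,u)$. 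On the other hand, because $u=A(\{u_n\})$ minimizes $y\mapsto\limsup_n d(u_n,y)$ over $X$ and $m\in X$, we have $L=\limsup_n d(u_n,u)\le\limsup_n d(u_n,m)$, whence $L^{2}\le\limsup_n d^{2}(u_n,m)$. Chaining these estimates gives $L^{2}\le r^{2}-\tfrac{R}{4}d^{2}(x,u)$, and feeding in $r\le L$ collapses this to $\tfrac{R}{4}d^{2}(x,u)\le 0$. As $R=(\pi-2\varepsilon)\tan(\varepsilon)>0$ by Lemma \ref{(lemma2.2)}, this forces $d(x,u)=0$, i.e.\ $x=u$.

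The steps that need genuine care, rather than the final algebra, are the ones linking the whole sequence to the subsequence: the existence of $\lim_n d(x_n,u)$ is exactly what lets me identify $\limsup_n d(u_n,u)$ with $L$ and secure the inequality $r\le L$, and I must make sure the minimizing property of the asymptotic center is invoked over a set actually containing the midpoint $m$. I would also double--check the passage from $\limsup$ of distances to $\limsup$ of squared distances (legitimate since $t\mapsto t^{2}$ is continuous and increasing on $[0,\infty)$) and confirm that the single diameter hypothesis $diam(X)<\frac{\pi}{2\sqrt{\kappa}}$ simultaneously supplies geodesic uniqueness (Proposition \ref{(prop2.3)}), the $R$-convexity constant (Lemma \ref{(lemma2.2)}), and the uniqueness of the asymptotic centers $\{x\}$ and $\{u\}$ (Proposition \ref{(prop2.9)}). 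Once these are in place the computation is routine.
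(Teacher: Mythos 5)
Your proposal proves the wrong statement. The result labeled \ref{(lemma2.10)} --- whose text, owing to a formatting slip in the source, sits in the enumerate immediately following the empty lemma environment --- is the three-part result quoted from \cite{dogh2}: (i) every bounded sequence in $X$ has a $\Delta$-convergent subsequence; (ii) if $K$ is a closed convex subset of $X$ and $\{x_{n}\}$ is a bounded sequence in $K$, then the asymptotic center of $\{x_{n}\}$ lies in $K$; (iii) if in addition $f:K\rightarrow X$ is nonexpansive, $\{x_{n}\}$ $\Delta$-converges to $x$ and $\lim_{n\rightarrow \infty }d(x_{n},f(x_{n}))=0$, then $f(x)=x$ and $x\in K$. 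What you have written is instead a proof of Lemma \ref{(lemma2.11)}: the coincidence $x=u$ of the asymptotic center of $\{x_{n}\}$ with that of a subsequence $\{u_{n}\}$ when $\{d(x_{n},u)\}$ converges. That is a different labeled statement of the paper (also cited to \cite{dogh2}), and nothing in your midpoint/$R$-convexity computation yields any of (i)--(iii): part (i) needs an existence argument (a regular-subsequence or ultrafilter-type compactness argument), part (ii) needs convexity and lower semicontinuity of $y\mapsto \limsup_{n}d(x_{n},y)$ together with closedness of $K$, and part (iii) is the demiclosedness principle for nonexpansive maps, normally derived from (i) and (ii); none of these is addressed.

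For what it is worth, read as a proof of Lemma \ref{(lemma2.11)}, your argument is essentially sound: the three $\limsup$ comparisons, the $R$-convexity estimate at the midpoint $m=\frac{1}{2}x\oplus \frac{1}{2}u$, and the minimality of $u$ as asymptotic center of $\{u_{n}\}$ combine to give $\frac{R}{4}d^{2}(x,u)\leq 0$, hence $x=u$. One caveat: the constant $R=(\pi -2\varepsilon )\tan (\varepsilon )$ supplied by Lemma \ref{(lemma2.2)} requires the quantitative bound $diam(X)<\frac{\pi -\varepsilon }{2\sqrt{\kappa }}$; the weaker standing assumption $diam(X)<\frac{\pi }{2\sqrt{\kappa }}$ that you invoke does not by itself produce a uniform $R>0$. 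Note finally that the paper offers no proof of Lemma \ref{(lemma2.10)} at all --- it is imported verbatim from \cite{dogh2} --- so there is no internal argument to compare against; but as it stands your proposal does not establish the statement in question.
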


\begin{enumerate}
\item[i)] \textit{Every bounded sequence in }$X$\textit{\ has a }$\Delta $%
\textit{-convergent subsequence}

\item[ii)] \textit{If }$K$\textit{\ is a closed convex subset of }$X$\textit{%
\ and if }$\{x_{n}\}$\textit{\ is a bounded sequence in }$K$\textit{, then
the\ asymptotic center of }$\{x_{n}\}$\textit{\ is in }$K$

\item[iii)] \textit{If }$K$\textit{\ is a closed convex subset of }$X$%
\textit{\ and if }$f$\textit{\ }$:K$\textit{\ }$\rightarrow X$\textit{\ is a
nonexpansive mapping, then the conditions, }$\{x_{n}\}$\textit{\ }$\Delta $%
\textit{-converges to }$\mathit{x}$\textit{\ and }$\lim_{n\rightarrow \infty
}d(x_{n},f(x_{n}))=0$\textit{, imply }$f(x)=x$\textit{\ and }$x\in K.$
\end{enumerate}

\begin{lemma}
\label{(lemma2.11)}\cite{dogh2} If $\{x_{n}\}$ is a bounded sequence in $X$
with $A(\{x_{n}\})=\{x\}$ and $\{u_{n}\}$ is a subsequence of $\{x_{n}\}$
with $A(\{u_{n}\})=u$ and the sequence $\{d(x_{n},u)\}$ converges, then $x=u$
\end{lemma}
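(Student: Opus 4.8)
The plan is to show that $x$ is itself an asymptotic center of the subsequence $\{u_n\}$; since by hypothesis $A(\{u_n\})$ is the single point $u$, this forces $x=u$. Throughout I write $r(a,\{x_n\})=\limsup_{n}d(a,x_n)$ as in the definitions above, so that $r(\{x_n\})$ and $r(\{u_n\})$ denote the corresponding asymptotic radii.

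First I would exploit the hypothesis that $\{d(x_n,u)\}$ converges. Since a convergent sequence has the same limit along every subsequence and $\{u_n\}$ is a subsequence of $\{x_n\}$, I get $\lim_{n}d(u_n,u)=\lim_{n}d(x_n,u)$, whence
\begin{equation*}
r(u,\{u_n\})=\limsup_{n}d(u_n,u)=\lim_{n}d(x_n,u)=r(u,\{x_n\}).
\end{equation*}
Next, because $x$ is the asymptotic center of $\{x_n\}$ it minimizes $a\mapsto r(a,\{x_n\})$, so $r(x,\{x_n\})=r(\{x_n\})\leq r(u,\{x_n\})$. Finally, since the $\limsup$ along a subsequence never exceeds the $\limsup$ along the whole sequence, $r(x,\{u_n\})=\limsup_{n}d(u_n,x)\leq\limsup_{n}d(x_n,x)=r(x,\{x_n\})$.

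Chaining these three facts gives
\begin{equation*}
r(x,\{u_n\})\leq r(x,\{x_n\})\leq r(u,\{x_n\})=r(u,\{u_n\}).
\end{equation*}
On the other hand $u$ is the asymptotic center of $\{u_n\}$, so $r(u,\{u_n\})=r(\{u_n\})\leq r(x,\{u_n\})$. The two inequalities pinch together to force $r(x,\{u_n\})=r(u,\{u_n\})=r(\{u_n\})$, that is, $x\in A(\{u_n\})$. Since the asymptotic center $A(\{u_n\})$ is the single point $u$ (uniqueness of asymptotic centers, as guaranteed by the hypothesis and by Proposition \ref{(prop2.9)}), I conclude $x=u$.

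There is no serious obstacle here: the entire argument is a pinching of one inequality chain. The only points demanding care are orienting the three inequalities correctly — using the convergence hypothesis to turn a $\limsup$ over $\{x_n\}$ into the radius over $\{u_n\}$, using the minimality of $x$ for $\{x_n\}$, and using the subsequence monotonicity of $\limsup$ — and recalling that the uniqueness of the asymptotic center is exactly what upgrades ``$x$ is a center of $\{u_n\}$'' to ``$x=u$''.
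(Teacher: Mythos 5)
Your proof is correct. Note that the paper itself gives no proof of Lemma \ref{(lemma2.11)}: it is quoted from \cite{dogh2}, so there is nothing internal to compare against; your pinching chain $r(x,\{u_n\})\leq r(x,\{x_n\})\leq r(u,\{x_n\})=r(u,\{u_n\})\leq r(x,\{u_n\})$, forcing $x\in A(\{u_n\})=\{u\}$, is essentially the standard argument from that reference, which phrases the same three comparisons contrapositively (assuming $x\neq u$, uniqueness of the two centers makes two of the inequalities strict, yielding a contradiction). In particular you use the convergence of $\{d(x_n,u)\}$ in exactly the one place it is needed, to upgrade $\limsup_n d(u_n,u)\leq\limsup_n d(x_n,u)$ to an equality; your parenthetical appeal to Proposition \ref{(prop2.9)} is unnecessary, since the hypothesis $A(\{u_n\})=\{u\}$ already supplies the singleton.
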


\begin{definition}
A multivalued mapping $T:K\rightarrow CB(K)$ is said to satisfy Condition
(I)\ if there is a nondecreasing function $f:[0,\infty ]\rightarrow \lbrack
0,\infty ]$ $f(0)=0,$ $f(r)>0$ for all $r\in (0,\infty )$ such that $%
d(x,Tx)\geq f(d(x,F))$ for all $x\in K$ where $F=F(T).$
\end{definition}

\begin{definition}
The mapping $T:X\rightarrow CB(X)$ is called hemicompact if, for any
sequence $\{x_{n}\}\subset X$ \ such that $d(x_{n},Tx_{n})$ $\rightarrow 0$
as $n$ $\rightarrow \infty $, there exists a subsequence $\{x_{n_{k}}\}$ of $%
\{x_{n}\}$ such that $x_{n_{k}}\rightarrow p\in X$
\end{definition}

\begin{lemma}
\label{(lemma2.12)}\cite{esp} )Let $\kappa >0$ and $X$ be a complete $%
CAT(\kappa )$ space with $diam(X)\leq \frac{\pi -\varepsilon }{2\sqrt{\kappa 
}}$ for some $\varepsilon \in (0,\pi /2)$. Let $K$ be a nonempty closed
convex subset of $X$. Then
\end{lemma}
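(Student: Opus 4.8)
Since the statement is cut off at ``Then,'' I read the conclusion in the form in which it is actually used by the schemes (1.1)--(1.2): that the nearest-point projection $P_{K}:X\rightarrow K$ onto the closed convex set $K$ is well defined, single-valued, and nonexpansive. The plan is to construct $P_{K}$ by a minimizing-sequence argument and then to establish nonexpansiveness through comparison triangles.

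\textbf{Existence and single-valuedness.} Fix $x\in X$ and put $d=d(x,K)=\inf_{y\in K}d(x,y)$, and choose $y_{n}\in K$ with $d(x,y_{n})\rightarrow d$. First I would show $\{y_{n}\}$ is Cauchy. Because $diam(X)\leq (\pi -\varepsilon )/(2\sqrt{\kappa })$, Lemma \ref{(lemma2.2)} makes $X$ an $R$-convex space with $R=(\pi -2\varepsilon )\tan (\varepsilon )>0$, so applying the $R$-convex inequality with $\lambda =1/2$, $a=x$, and the midpoint $m_{n,m}=\tfrac12 y_{n}\oplus\tfrac12 y_{m}\in K$ gives
\begin{equation*}
d^{2}\leq d^{2}(x,m_{n,m})\leq \tfrac12 d^{2}(x,y_{n})+\tfrac12 d^{2}(x,y_{m})-\tfrac{R}{8}d^{2}(y_{n},y_{m}).
\end{equation*}
Letting $n,m\rightarrow\infty$ forces $d(y_{n},y_{m})\rightarrow 0$; completeness of $X$ and closedness of $K$ then give $y_{n}\rightarrow y^{\ast}\in K$ with $d(x,y^{\ast})=d$. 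The same inequality applied to two minimizers shows their midpoint would be strictly closer unless they coincide, giving uniqueness. Set $P_{K}x=y^{\ast}$.

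\textbf{Nonexpansiveness.} This is the step I expect to be the main obstacle. The plan is to use the variational characterization of the projection: for each $z\in K$ the Alexandrov angle at $P_{K}x$ between the geodesics $[P_{K}x,x]$ and $[P_{K}x,z]$ is at least $\pi/2$. Taking $z=P_{K}x'$, writing the $\kappa$-law of cosines in the comparison triangle for $\{x,P_{K}x,P_{K}x'\}$, carrying out the symmetric estimate with $x$ and $x'$ interchanged, and adding the two, the angle conditions kill the mixed cosine terms and leave $d(P_{K}x,P_{K}x')\leq d(x,x')$. Every triangle occurring here has perimeter below $2D_{\kappa}$ by the diameter bound, so the comparison triangles exist and the $CAT(\kappa )$ inequality applies (Propositions \ref{(prop2.3)}--\ref{(prop2.4)}).

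\textbf{Main obstacle.} Unlike the $CAT(0)$ case, where the $CN$ inequality and the obtuse-angle characterization are global, here every estimate is valid only inside balls of radius $<\pi/(2\sqrt{\kappa })$, and the spherical law of cosines lacks the clean monotonicity of its Euclidean counterpart. The hypothesis $diam(X)\leq (\pi -\varepsilon )/(2\sqrt{\kappa })$ is exactly what confines $x,x',P_{K}x,P_{K}x'$ to such a ball and keeps the curvature-corrected cross term of the right sign; verifying that sign, together with the strict positivity of $R$ from Lemma \ref{(lemma2.2)}, is where the real work lies. Once both are secured, the Cauchy estimate and the angle-based contraction are routine. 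If instead the intended conclusion is a Kirk-type fixed-point statement for nonexpansive self-maps of $K$, the same ingredients drive the argument, now combined with the uniqueness of asymptotic centers from Proposition \ref{(prop2.9)} and the demiclosedness facts of Lemma \ref{(lemma2.10)}.
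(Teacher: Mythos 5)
The paper itself contains no proof of this lemma: it is quoted verbatim from Esp\'{\i}nola and Fern\'{a}ndez-Le\'{o}n \cite{esp}, and the conclusion truncated after ``Then'' reads: (i) the metric projection $P_{K}(x)$ is a singleton, (ii) if $x\notin K$ and $y\in K$ with $y\neq P_{K}(x)$ then $\angle _{P_{K}(x)}(x,y)\geq \frac{\pi }{2}$, (iii) $d(P_{K}(x),P_{K}(y))\leq d(x,y)$. So you reconstructed the intended conclusions correctly, and your existence/uniqueness half is a correct, self-contained argument: Lemma \ref{(lemma2.2)} with $\lambda =1/2$ applied to a minimizing sequence gives the Cauchy property, and the same inequality applied to two minimizers gives uniqueness. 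Note, however, that the ``variational characterization'' you invoke is precisely item (ii) of the lemma; using it to prove (iii) is a legitimate order of proof, but only if you actually prove (ii) first (standard argument: if the angle were $<\pi /2$, a comparison-triangle first-variation estimate shows points on $[P_{K}(x),y]$ just past $P_{K}(x)$ are strictly closer to $x$, contradicting minimality). You never do this.

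The genuine gap is in the nonexpansiveness step itself. Writing $p=P_{K}(x)$, $p^{\prime }=P_{K}(x^{\prime })$, the two law-of-cosines estimates you propose to ``add'' are, using the obtuse comparison angles at $\bar{p}$ and $\bar{p}^{\prime }$,
\begin{align*}
\cos (\sqrt{\kappa }\,d(x,p^{\prime })) &\leq \cos (\sqrt{\kappa }
\,d(x,p))\cos (\sqrt{\kappa }\,d(p,p^{\prime })), \\
\cos (\sqrt{\kappa }\,d(x^{\prime },p)) &\leq \cos (\sqrt{\kappa }
\,d(x^{\prime },p^{\prime }))\cos (\sqrt{\kappa }\,d(p,p^{\prime })).
\end{align*}
The quantity $d(x,x^{\prime })$ appears in neither inequality: each triangle comparison only bounds a diagonal of the quadrilateral $(x,p,p^{\prime },x^{\prime })$ by its sides, and no algebraic combination of the two can produce the side $[x,x^{\prime }]$. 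The mixed cosine terms do not ``cancel''; the needed term is simply absent. To relate $d(p,p^{\prime })$ to $d(x,x^{\prime })$ you must compare a configuration containing all four points simultaneously --- for instance, glue the comparison triangles for $(x,p,p^{\prime })$ and $(x,p^{\prime },x^{\prime })$ in $M_{\kappa }^{2}$ along $[\bar{x},\bar{p}^{\prime }]$ and invoke Alexandrov's lemma, or apply Reshetnyak majorization to the quadrilateral --- and then prove the spherical fact that a quadrilateral confined to an open quarter-sphere whose two angles on the base $[\bar{p},\bar{p}^{\prime }]$ are $\geq \pi /2$ has opposite side at least as long as the base. That four-point comparison, where the diameter hypothesis is actually consumed, is exactly the content missing from your sketch, and without it the contraction claim does not follow.
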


\begin{enumerate}
\item[i)] the metric projection $P_{K}(x)$ of $x$onto $K$ is a singleton,

\item[ii)] if $x\notin K$ and $y\in K$ with $u\neq P_{K}(x)$, then $\angle
_{P_{K}(x)}(x,y)\geq \frac{\pi }{2}$,

\item[iii)] for each $y\in K$, $d(P_{K}(x),P_{K}(y))\leq d(x,y)$.
\end{enumerate}

\begin{definition}
$T$ is called $(a_{1},a_{2},b_{1},b_{2})-$multivalued hybrid mapping type I
\ from $X$ to $CB(X)$ if 
\begin{equation*}
a_{1}(x)d^{2}(u,v)+a_{2}(x)d^{2}(u,y)\leq
b_{1}(x)d^{2}(x,v)+b_{2}(x)d^{2}(x,y)
\end{equation*}%
satisfied for all $x,y\in X,$ $u\in Tx$ and $v\in Ty$ where $%
a_{1},a_{2}:X\rightarrow 
\mathbb{R}
\backslash (0,1)$ and $b_{1},b_{2}:X\rightarrow \lbrack 0,1]$ with $%
a_{1}(x)+a_{2}(x)\geq 1$ and $b_{1}(x)+b_{2}(x)\leq 1.$
\end{definition}

\begin{definition}
$T$ is called generalized multivalued hybrid mapping type I\ from $X$ to $%
CB(X)$ \ if 
\begin{eqnarray*}
d^{2}(u,v) &\leq &a_{1}(x)d^{2}(x,y)+a_{2}(x)d^{2}(u,y)+a_{3}(x)d^{2}(x,v) \\
&&+k_{1}(x)d^{2}(u,x)+k_{2}(x)d^{2}(v,y)
\end{eqnarray*}
for all $x,y\in X,$there are $u\in Tx$ and $y\in Ty$ where $%
a_{1,}a_{2},a_{3},k_{1},k_{2}:X\rightarrow \lbrack 0,1]$ with $%
a_{1}(x)+a_{2}(x)+a_{3}(x)<1$, $2k_{1}(x)<1-a_{2}(x)$ and $%
2k_{2}(x)<1-a_{3}(x)$ for all $x\in X.$
\end{definition}

\begin{definition}
$T$ is called $(a_{1},a_{2},b_{1},b_{2})-$multivalued hybrid mapping type
II\ from $X$ to $CB(X)$ if for all $x,y\in X$%
\begin{equation*}
a_{1}(x)H^{2}(Tx,Ty)+a_{2}(x)d^{2}(Tx,y)\leq
b_{1}(x)d^{2}(x,Ty)+b_{2}(x)d^{2}(x,y)
\end{equation*}%
where $a_{1},a_{2}:X\rightarrow 
\mathbb{R}
$ and $b_{1},b_{2}:X\rightarrow 
\mathbb{R}
$ with $a_{1}(x)+a_{2}(x)\geq 1$ and $b_{1}(x)+b_{2}(x)\leq 1.$
\end{definition}

\begin{definition}
$T$ is called generalized multivalued hybrid mapping type II\ from $X$ to $%
CB(X)$ if for all $x,y\in X$%
\begin{eqnarray*}
H^{2}(Tx,Ty) &\leq
&a_{1}(x)d^{2}(x,y)+a_{2}(x)d^{2}(Tx,y)+a_{3}(x)d^{2}(x,Ty) \\
&&+k_{1}(x)d^{2}(Tx,x)+k_{2}(x)d^{2}(Ty,y)
\end{eqnarray*}%
where $a_{1,}a_{2},a_{3},k_{1},k_{2}:X\rightarrow \lbrack 0,1]$ with $%
a_{1}(x)+a_{2}(x)+a_{3}(x)<1$, $2k_{1}(x)<1-a_{2}(x)$ and $%
2k_{2}(x)<1-a_{3}(x)$ for all $x\in X.$
\end{definition}

\section{Existence Results}

\begin{proposition}
Let $X$ be a complete $CAT(\kappa )$ space, $K$ be a nonempty, closed and
convex subset of $X$ with $rad(K)<\frac{\pi }{2\sqrt{\kappa }}\ $and $T$ be $%
(a_{1},a_{2},b_{1},b_{2})-$ multivalued hybrid mapping type I\ from $K$ to $%
C(K)$ with $F(T)\neq \emptyset $ then $F(T)$ closed and $Tp=\{p\}$ for all $%
p\in F(T)$
\end{proposition}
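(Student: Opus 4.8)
The plan is to prove the two assertions in order: first that $Tp = \{p\}$ for every $p \in F(T)$, and then, using this, that $F(T)$ is closed. Both parts rest on the same device, namely choosing the free points $x,y,u,v$ in the defining inequality so that the two left-hand terms collapse into a single multiple of $a_1(\cdot)+a_2(\cdot)\geq 1$, while the two right-hand terms collapse into a multiple of $b_1(\cdot)+b_2(\cdot)\leq 1$.

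For the first assertion, I would fix $p \in F(T)$, so $p \in Tp$, and take an arbitrary $u \in Tp$. Applying the type I inequality with $x = y = p$, keeping $u$ in the first slot and choosing $v = p$ (legitimate since $p \in Tp$), the right-hand side vanishes because $d(p,p)=0$, and both left-hand distances equal $d^2(u,p)$. This yields $(a_1(p)+a_2(p))\,d^2(u,p) \leq 0$, and since $a_1(p)+a_2(p) \geq 1 > 0$ it forces $d(u,p)=0$, i.e. $u=p$. As $u \in Tp$ was arbitrary, $Tp = \{p\}$.

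For the closedness of $F(T)$, I would take a sequence $\{p_n\} \subset F(T)$ with $p_n \to q$; since $K$ is closed, $q \in K$, and it remains to show $q \in Tq$. I would fix any $w \in Tq$ and apply the inequality with $x = q$, $y = p_n$, $u = w$, and $v = p_n$, where $v = p_n$ is forced by the first part (as $Tp_n = \{p_n\}$). The point here is that all coefficients are evaluated at the single point $q$. Because $v = y = p_n$, the two left terms merge into $(a_1(q)+a_2(q))\,d^2(w,p_n)$ and the two right terms into $(b_1(q)+b_2(q))\,d^2(q,p_n)$, so $a_1(q)+a_2(q) \geq 1$ and $b_1(q)+b_2(q) \leq 1$ give $d^2(w,p_n) \leq d^2(q,p_n) \to 0$. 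Thus $p_n \to w$, and by uniqueness of limits $w = q$. Hence every element of $Tq$ equals $q$, so $q \in Tq$, and $F(T)$ is closed.

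The substantive obstacle is not any hard estimate but the correct allocation of $x,y,u,v$: the naive choice $x=p_n$, $y=q$ leaves the coefficients evaluated at the varying points $p_n$, where no continuity is assumed and the sign of $a_1(p_n)-b_1(p_n)$ cannot be controlled. Placing the limit $q$ in the $x$-slot and exploiting $Tp_n=\{p_n\}$ to pin down $v=y=p_n$ is exactly what makes the two $a$-terms (and the two $b$-terms) coalesce, so that only the sum conditions $a_1+a_2\geq 1$ and $b_1+b_2\leq 1$ are needed. In this argument the hypothesis $rad(K)<\frac{\pi}{2\sqrt{\kappa}}$ enters only to guarantee that the ambient portion of $X$ is uniquely geodesic and well behaved so that the images $Tx$ and the relevant distances are unambiguously defined.
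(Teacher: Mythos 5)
Your proposal is correct and takes essentially the same approach as the paper: apply the defining type I inequality with all coefficients evaluated at the limit point (slots $x=q$, $y=p_n$, $u\in Tq$, $v=p_n\in Tp_n$), collapse the left side via $a_1+a_2\geq 1$ and the right side via $b_1+b_2\leq 1$, and pass to the limit to force $u=q$. The only cosmetic difference is that the paper proves both assertions in one stroke (your first part is just the constant-sequence case of the same argument), whereas you treat $Tp=\{p\}$ separately first; also, your choice $v=p_n$ needs only $p_n\in Tp_n$, not the first part, since the inequality holds for all $v\in Tp_n$.
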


\begin{proof}
Let $\{x_{n}\}$ be a sequence in $F(T)$ and $x_{n}\rightarrow x\in X.$ Then\
for any $u\in Tx,$we have

\begin{eqnarray*}
d^{2}(u,x_{n}) &\leq &a_{1}(x)d^{2}(u,x_{n})+a_{2}(x)d^{2}(u,x_{n}) \\
&\leq &b_{1}(x)d^{2}(x,x_{n})+b_{2}(x)d^{2}(x,x_{n}) \\
&\leq &d^{2}(x,x_{n})
\end{eqnarray*}

then taking limit on $n$ we have

\begin{equation*}
d(u,x)\leq 0
\end{equation*}

so $u=x\in Tx=\{x\}$
\end{proof}

\begin{proposition}
Let $\kappa >0$ and $X$ be a complete $CAT(\kappa )$ space and $K$ be a
nonempty closed convex subset of $X$ with $rad(K)\leq \frac{\pi -\varepsilon 
}{2\sqrt{\kappa }}$ for some $\varepsilon \in (0,\pi /2)$ and $%
T:K\rightarrow C(X)$ be a generalized multivalued hybrid mapping type I with 
$F(T)\neq \emptyset $ then $F(T)$ closed and $Tp=\{p\}$ for all $p\in F(T)$
\end{proposition}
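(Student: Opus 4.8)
The plan is to mirror the structure of the proof of the preceding proposition, establishing $Tp=\{p\}$ first and then deducing closedness of $F(T)$ from it. Throughout, the only geometric input I expect to need is the triangle inequality together with continuity of the distance function; the $CAT(\kappa)$ hypothesis and the bound on $rad(K)$ serve mainly to guarantee (via Lemma~\ref{(lemma2.12)}) that the ambient geometry is well behaved, while the actual estimates are purely metric manipulations of the defining inequality, as in the single-valued case.

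First I would fix $p\in F(T)$, so that $p\in Tp$, and take an arbitrary $v\in Tp$. Applying the defining inequality with $x=y=p$ and using $p$ itself as the point of $Tx=Tp$, every term carrying a factor $d^{2}(p,p)$ vanishes, leaving
\[
d^{2}(p,v)\leq a_{3}(p)\,d^{2}(p,v)+k_{2}(p)\,d^{2}(v,p),
\]
that is, $\bigl(1-a_{3}(p)-k_{2}(p)\bigr)d^{2}(p,v)\leq 0$. The constraint $2k_{2}(p)<1-a_{3}(p)$ together with $a_{3}(p)<1$ (which follows from $a_{1}+a_{2}+a_{3}<1$) gives $a_{3}(p)+k_{2}(p)<\tfrac{1+a_{3}(p)}{2}<1$, so the coefficient $1-a_{3}(p)-k_{2}(p)$ is strictly positive and hence $d(p,v)=0$. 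Since $v\in Tp$ was arbitrary, $Tp=\{p\}$.

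For closedness, I would take a sequence $\{x_{n}\}\subseteq F(T)$ with $x_{n}\to x$ and show $x\in F(T)$. By the first part each $Tx_{n}=\{x_{n}\}$, so applying the inequality with this $x$, with $y=x_{n}$, with an arbitrary $u\in Tx$, and with the forced choice $v=x_{n}\in Tx_{n}$ yields, after cancelling $k_{2}(x)d^{2}(x_{n},x_{n})=0$,
\[
(1-a_{2}(x))\,d^{2}(u,x_{n})\leq \bigl(a_{1}(x)+a_{3}(x)\bigr)d^{2}(x,x_{n})+k_{1}(x)\,d^{2}(u,x).
\]
Letting $n\to\infty$ and using $x_{n}\to x$ (so $d(x,x_{n})\to 0$ and $d(u,x_{n})\to d(u,x)$) collapses this to $\bigl(1-a_{2}(x)-k_{1}(x)\bigr)d^{2}(u,x)\leq 0$; the constraint $2k_{1}(x)<1-a_{2}(x)$ makes the coefficient positive exactly as before, forcing $u=x$. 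Thus $Tx=\{x\}$, in particular $x\in Tx$, so $x\in F(T)$ and $F(T)$ is closed.

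The step I expect to be the main obstacle is making rigorous the choices of $u$ and $v$ in the defining inequality: the definition is phrased existentially (\emph{``there are $u\in Tx$ and $v\in Ty$''}), whereas both arguments above require the inequality to hold for the specific point $p\in Tp$, for the arbitrary $v\in Tp$, and for the forced value $v=x_{n}$. The cleanest route is to read the definition as holding for all $u\in Tx$ and all $v\in Ty$ (as in the single-valued generalized hybrid class of Lin et al., and as is implicit in the proof of the preceding proposition); under that reading the two coefficient inequalities $2k_{2}<1-a_{3}$ and $2k_{1}<1-a_{2}$ are precisely what is needed, and no further structural input is required.
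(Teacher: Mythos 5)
Your proposal is correct and follows essentially the paper's own argument: your closedness step (take $u\in Tx$ arbitrary, set $y=x_{n}$ and $v=x_{n}$, cancel the $k_{2}$ term, divide by $1-a_{2}(x)$, and let $n\to\infty$ using $2k_{1}(x)<1-a_{2}(x)$) is precisely the paper's proof, which obtains $Tp=\{p\}$ as the special case of a constant sequence rather than via your diagonal $x=y=p$ specialization with the $2k_{2}(x)<1-a_{3}(x)$ constraint. The quantifier issue you flag is genuine but is resolved exactly as you suggest: the paper's proof likewise applies the inequality to an arbitrary $u\in Tx$ with the forced choice $v=x_{n}$, i.e., it tacitly reads the existentially phrased definition as universal.
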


\begin{proof}
Let $\{x_{n}\}$ be a sequence in $F(T)$ and $x_{n}\rightarrow x\in X.$ Then\
for any $u\in Tx,$we have

\begin{eqnarray*}
d^{2}(u,x_{n}) &\leq
&a_{1}(x)d^{2}(x,x_{n})+a_{2}(x)d^{2}(u,x_{n})+a_{3}(x)d^{2}(x,x_{n}) \\
&&+k_{1}(x)d^{2}(u,x)+k_{2}(x)d^{2}(x_{n},x_{n})
\end{eqnarray*}

implies that

\begin{equation*}
d(u,x_{n})\leq d^{2}(x,x_{n})+\frac{k_{1}(x)}{1-a_{2}(x)}d^{2}(u,x)
\end{equation*}%
then taking limit on $n$ we have

\begin{equation*}
(1-\frac{k_{1}(x)}{1-a_{2}(x)})d(u,x)\leq 0
\end{equation*}

so $u=x\in Tx=\{x\}$
\end{proof}

\begin{proposition}
Let $X$ be a complete $CAT(\kappa )$ space, $K$ be a nonempty, closed and
convex subset of $X$ with $rad(K)<\frac{\pi }{2\sqrt{\kappa }}\ $and $T$ be $%
(a_{1},a_{2},b_{1},b_{2})-$ multivalued hybrid mapping type II\ from $K$ to $%
C(K)$ with $F(T)\neq \emptyset $ and $a_{1}(p)\geq 1$ for all $p\in F(T)$
then $F(T)$ closed and $Tp=\{p\}$ for all $p\in F(T)$
\end{proposition}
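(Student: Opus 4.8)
The plan is to prove the two assertions, namely that $F(T)$ is closed and that $Tp=\{p\}$ for every $p\in F(T)$, by substituting well chosen pairs into the defining type II inequality, as in the two preceding propositions. The new ingredient is that the Hausdorff term $H^{2}(Tx,Ty)$ must now be estimated from \emph{below} by a point-to-set distance, and it is precisely here that the extra hypothesis $a_{1}(p)\geq 1$ is used.

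For the closedness, let $\{x_{n}\}\subset F(T)$ with $x_{n}\rightarrow x$. Since $x_{n}\in F(T)$, I apply the definition at the pair $(x_{n},x)$, so that the controlled coefficient $a_{1}(x_{n})\geq 1$ appears:
\[
a_{1}(x_{n})H^{2}(Tx_{n},Tx)+a_{2}(x_{n})d^{2}(Tx_{n},x)\leq b_{1}(x_{n})d^{2}(x_{n},Tx)+b_{2}(x_{n})d^{2}(x_{n},x).
\]
Because $x_{n}\in Tx_{n}$, the definition of the Hausdorff metric gives $H(Tx_{n},Tx)\geq d(x_{n},Tx)$, so the left side is at least $a_{1}(x_{n})d^{2}(x_{n},Tx)\geq d^{2}(x_{n},Tx)$; discarding the nonnegative $a_{2}$-term and using $b_{1}(x_{n})+b_{2}(x_{n})\leq 1$ on the right yields $d^{2}(x_{n},Tx)\leq d^{2}(x_{n},x)$. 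Letting $n\rightarrow\infty$ gives $d(x_{n},Tx)\rightarrow 0$, whence $d(x,Tx)\leq d(x,x_{n})+d(x_{n},Tx)\rightarrow 0$. As $Tx$ is closed this forces $x\in Tx$, i.e. $x\in F(T)$, so $F(T)$ is closed.

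For the singleton property, fix $p\in F(T)$ and $q\in Tp$, and apply the definition at $(p,q)$. Since $q\in Tp$ we have $d(Tp,q)=d(q,Tp)=0$, which removes the $a_{2}$-term, while $p\in Tp$ gives $H(Tp,Tq)\geq d(p,Tq)$; together with $a_{1}(p)\geq 1$ this leads to
\[
(a_{1}(p)-b_{1}(p))d^{2}(p,Tq)\leq b_{2}(p)d^{2}(p,q),
\]
and hence $d(p,Tq)\leq d(p,q)$ after using $a_{1}(p)-b_{1}(p)\geq b_{2}(p)$.

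The step I expect to be the main obstacle is turning this last estimate into $d(p,q)=0$. The quantity the inequality controls is the set-distance $d(p,Tq)$, not the separation $d(p,q)$ of the two points $p,q$ lying in the common set $Tp$; indeed when $Tq$ is close to $Tp$ the Hausdorff term degenerates and carries no information about that separation. To overcome this I would use that $Tp\in C(K)$ is compact (and convex): take $q\in Tp$ realizing $\max_{z\in Tp}d(p,z)$, insert the geodesic points $(1-t)p\oplus tq\in Tp$ to obtain a one-parameter family of estimates, and combine the two orientations $(p,q)$ and $(q,p)$ so that the relations $a_{1}(p)+a_{2}(p)\geq 1$ and $b_{1}(p)+b_{2}(p)\leq 1$ force this extremal distance to vanish, giving $Tp=\{p\}$. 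This extremal/compactness argument is the delicate part; the two substitutions and the lower Hausdorff bound are routine.
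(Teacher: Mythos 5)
Your closedness argument is correct, and it is essentially the paper's own argument applied at the transposed pair: the paper substitutes $(x,x_{n})$ (limit point in the first slot) and uses $a_{1}(x)+a_{2}(x)\geq 1$ together with $d(Tx,x_{n})\leq H(Tx,Tx_{n})$, whereas you substitute $(x_{n},x)$, where the hypothesis $a_{1}(x_{n})\geq 1$ is genuinely available since $x_{n}\in F(T)$ --- a small improvement, because the paper needs unstated sign conditions at the limit point $x$, which is not yet known to lie in $F(T)$. (Be aware, though, that your ``nonnegative $a_{2}$-term'' is also an unstated assumption: in the type II definition the coefficients are arbitrary real-valued functions, constrained only by $a_{1}+a_{2}\geq 1$ and $b_{1}+b_{2}\leq 1$.) Either way one gets $d(x_{n},Tx)\leq d(x_{n},x)\rightarrow 0$, hence $x\in Tx$ since $Tx$ is closed.

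The genuine gap is the singleton claim $Tp=\{p\}$, exactly where you located it --- and it cannot be repaired, because that part of the statement is false as written. Take $X=\mathbb{R}$ (a complete $CAT(\kappa )$ space for every $\kappa >0$), $K=[0,1]$, so $rad(K)=\frac{1}{2}<\frac{\pi }{2\sqrt{\kappa }}$ for $\kappa =1$, and the constant mapping $Tx=K$ for all $x\in K$. With $a_{1}\equiv 1$, $a_{2}\equiv 0$, $b_{1}\equiv 0$, $b_{2}\equiv 1$ the type II inequality reduces to $0\leq d^{2}(x,y)$, so all hypotheses hold with $F(T)=K\neq \emptyset $ and $a_{1}(p)\geq 1$ everywhere, yet $Tp=K\neq \{p\}$. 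This also dooms your proposed extremal/compactness completion: for this $T$, every substitution you contemplate --- the pair $(p,q)$, the reversed pair $(q,p)$, or the geodesic points $(1-t)p\oplus tq$ --- produces $0$ on the left-hand side, so no contradiction can ever be extracted. The structural reason is the one you identified: the type II condition only involves $H(Tx,Ty)$ and point-to-set distances, all of which degenerate when the image sets coincide, so the condition never ``sees'' the diameter of $Tp$. For comparison, the paper's own proof has the same hole: its displayed chain proves only $d(Tx,x)=0$, i.e. closedness of $F(T)$, and the final line ``$u=x\in Tx=\{x\}$'' is carried over verbatim from the type I propositions, where the defining inequality quantifies over all $u\in Tx$ and $v\in Ty$ and therefore really does force $Tp$ to be a singleton; the Hausdorff-metric formulation of type II does not, so your refusal to claim that step was the right instinct.
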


\begin{proof}
Let $\{x_{n}\}$ be a sequence in $F(T)$ and $x_{n}\rightarrow x\in X.$ Then\
for any $Tx,$we have

\begin{eqnarray*}
d^{2}(Tx,x_{n}) &\leq &a_{1}(x)d^{2}(Tx,x_{n})+a_{2}(x)d^{2}(Tx,x_{n}) \\
&\leq &a_{1}(x)H^{2}(Tx,Tx_{n})+a_{2}(x)d^{2}(Tx,x_{n}) \\
&\leq &b_{1}(x)d^{2}(x,Tx_{n})+b_{2}(x)d^{2}(x,x_{n}) \\
&\leq &d^{2}(x,x_{n})
\end{eqnarray*}

then taking limit on $n$ we have

\begin{equation*}
d(Tx,x)\leq 0
\end{equation*}

so $u=x\in Tx=\{x\}$
\end{proof}

\begin{proposition}
Let $\kappa >0$ and $X$ be a complete $CAT(\kappa )$ space and $K$ be a
nonempty closed convex subset of $X$ with $rad(K)\leq \frac{\pi -\varepsilon 
}{2\sqrt{\kappa }}$ for some $\varepsilon \in (0,\pi /2)$ and $%
T:K\rightarrow KC(X)$ be a generalized multivalued hybrid mapping type II
with $F(T)\neq \emptyset $ then $F(T)$ closed and $Tp=\{p\}$ for all $p\in
F(T)$
\end{proposition}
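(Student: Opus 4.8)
The plan is to follow the template of the three preceding propositions: pick a sequence $\{x_{n}\}\subset F(T)$ with $x_{n}\rightarrow x\in X$, feed the pair $(x,x_{n})$ into the defining inequality of a generalized multivalued hybrid mapping type II, and exploit $x_{n}\in Tx_{n}$ (so that $d(x_{n},Tx_{n})=0$) to collapse the set--point terms through the Hausdorff metric. First I would record the three elementary estimates that hold whenever $x_{n}\in Tx_{n}$:
\[
d(Tx_{n},x_{n})=0,\qquad d(x,Tx_{n})\leq d(x,x_{n}),\qquad d(Tx,x_{n})\leq H(Tx,Tx_{n}),
\]
the last one because $x_{n}\in Tx_{n}$ forces $d(x_{n},Tx)\leq\sup_{b\in Tx_{n}}d(b,Tx)\leq H(Tx,Tx_{n})$.

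Substituting these into the defining inequality at $(x,x_{n})$ and using $a_{i}(x)\geq 0$ gives
\[
(1-a_{2}(x))\,H^{2}(Tx,Tx_{n})\leq (a_{1}(x)+a_{3}(x))\,d^{2}(x,x_{n})+k_{1}(x)\,d^{2}(Tx,x),
\]
where $1-a_{2}(x)>0$ since $a_{1}+a_{2}+a_{3}<1$. The key is then to close the loop on the stray term $d^{2}(Tx,x)$ by the triangle inequality for set--point distance, $d(Tx,x)\leq d(x,x_{n})+d(x_{n},Tx)\leq d(x,x_{n})+H(Tx,Tx_{n})$. Writing $h_{n}=H(Tx,Tx_{n})$ and $\delta_{n}=d(x,x_{n})$, the two displays combine into a quadratic inequality
\[
(1-a_{2}(x)-k_{1}(x))\,h_{n}^{2}-2k_{1}(x)\,\delta_{n}h_{n}-(a_{1}(x)+a_{3}(x)+k_{1}(x))\,\delta_{n}^{2}\leq 0 .
\]
Here the constraint $2k_{1}(x)<1-a_{2}(x)$ is exactly what guarantees $1-a_{2}(x)-k_{1}(x)>0$, so solving for the positive root yields $h_{n}\leq C\,\delta_{n}$ for a constant $C$ depending only on the coefficients. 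Since $\delta_{n}\rightarrow 0$ we get $h_{n}\rightarrow 0$, whence $d(Tx,x)\leq h_{n}+\delta_{n}\rightarrow 0$. Because $Tx\in KC(X)$ is compact, hence closed, $d(Tx,x)=0$ forces $x\in Tx$, so $x\in F(T)$ and $F(T)$ is closed. The hypothesis $rad(K)\leq\frac{\pi-\varepsilon}{2\sqrt{\kappa}}$ is what makes the geodesic and projection machinery of Lemma~\ref{(lemma2.12)} and Proposition~\ref{(prop2.5)} available, but this closedness step is itself purely metric.

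The genuine difficulty, and where I expect the main obstacle, is the claim $Tp=\{p\}$ for $p\in F(T)$. In the type I setting one isolates an arbitrary $u\in Tx$ by choosing the companion point $v=x_{n}\in Tx_{n}$, so the pointwise inequality directly yields $u=x$; under the Hausdorff formulation this device is unavailable, and evaluating the inequality on the diagonal is vacuous because $H(Tp,Tp)=0$. The route I would attempt is to use compactness of $Tp\in KC(X)$ to select $q\in Tp$ with $d(p,q)=\max_{z\in Tp}d(p,z)$, assume $Tp\subseteq K$ so that $q$ is an admissible argument, and apply the inequality at $(p,q)$. Since $p,q\in Tp$ we have $d(Tp,p)=d(Tp,q)=0$, and bounding $d(p,Tq)\leq H(Tp,Tq)$ and $d(q,Tq)\leq H(Tp,Tq)$ collapses everything to
\[
(1-a_{3}(p)-k_{2}(p))\,H^{2}(Tp,Tq)\leq a_{1}(p)\,d^{2}(p,q),
\]
with positive leading coefficient by $2k_{2}(p)<1-a_{3}(p)$. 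The obstacle is that this only \emph{upper} bounds $H(Tp,Tq)$ by $d(p,q)$, whereas forcing $d(p,q)=0$ requires a matching \emph{lower} bound on $H(Tp,Tq)$ (or on $d(p,Tq)$) in terms of $d(p,q)$. Supplying such a bound — presumably by combining the $R$-convexity of the ambient space from Lemma~\ref{(lemma2.2)} with the structure of the compact convex value $Tp$ and the maximality of $q$ — is the crux of the singleton assertion and the step I expect to demand the most care.
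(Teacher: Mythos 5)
Your first half is sound: the estimate $(1-a_{2}(x))H^{2}(Tx,Tx_{n})\leq (a_{1}(x)+a_{3}(x))d^{2}(x,x_{n})+k_{1}(x)d^{2}(Tx,x)$, the closing of the loop via $d(Tx,x)\leq d(x,x_{n})+H(Tx,Tx_{n})$, and the resulting quadratic inequality with leading coefficient $1-a_{2}(x)-k_{1}(x)>0$ do give $H(Tx,Tx_{n})\leq C\,d(x,x_{n})\rightarrow 0$, hence $d(x,Tx)=0$ and $x\in Tx$ by compactness of $Tx$. This is actually tighter than the paper's own treatment, which fixes an arbitrary $u\in Tz$, starts from $d^{2}(u,Tx_{n-1})\leq H^{2}(Tx,Tx_{n-1})\leq\cdots$, and then silently passes to ``$d^{2}(u,x_{n})\leq d^{2}(x,x_{n})+\frac{k_{1}(x)}{1-a_{2}(x)}d^{2}(u,x)$''. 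That passage is exactly backwards: only $d(u,Tx_{n})\leq d(u,x_{n})$ holds, not the reverse; equivalently, the paper presumes that the points $y_{n}\in Tx_{n}$ realizing $d(u,Tx_{n})$ satisfy $d(x_{n},y_{n})\rightarrow 0$, which is never established (the truncated sentence at the start of the paper's proof is an aborted attempt at precisely this).

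The obstacle you flagged in the second half is therefore not a matter of ``more care'': it is fatal, and the matching lower bound you are looking for does not exist. Take $K$ admissible with more than one point, fix a compact convex $C\subseteq K$ with more than one point, and let $Tx=C$ for every $x\in K$. Then $H(Tx,Ty)=0$ for all $x,y$, so $T$ is a generalized multivalued hybrid mapping type II (all coefficients may be taken $0$, or $a_{1}\equiv \frac{1}{2}$), and $F(T)=C\neq \emptyset $ is closed; yet $Tp=C\neq \{p\}$ for every $p\in F(T)$. The Hausdorff-metric formulation of type II carries no information about the diameter of $Tp$ once $p\in Tp$, because every set--point distance in the defining inequality vanishes there; this is exactly why your inequality $(1-a_{3}(p)-k_{2}(p))H^{2}(Tp,Tq)\leq a_{1}(p)d^{2}(p,q)$ is the end of the road. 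The paper's ``proof'' of the singleton claim is nothing but the illegitimate substitution identified above (applied, for a given $p$, to the constant sequence $x_{n}=p$). In short: your closedness argument is correct and repairs the paper's; the singleton assertion you could not prove is in fact false as stated, so the defect lies in the proposition and the paper's proof, not in your attempt.
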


\begin{proof}
Let $\{x_{n}\}$ be a sequence in $F(T)$ and $x_{n}\rightarrow x\in X.$ Then\
for any $u\in Tx,$we have we can find $y_{n}\in Tx_{n}$scuh that $%
d(u,y_{n})=d(u,Tx_{n}).d(x_{n},y_{n})\leq d\lim_{n\rightarrow \infty }d$

\begin{eqnarray*}
d^{2}(u,Tx_{n-1}) &\leq &H^{2}(Tx,Tx_{n-1}) \\
&\leq
&a_{1}(x)d^{2}(x,x_{n-1})+a_{2}(x)d^{2}(Tx,x_{n-1})+a_{3}(x)d^{2}(x,Tx_{n-1})
\\
&&+k_{1}(x)d^{2}(Tx,x)+k_{2}(x)d^{2}(Tx_{n-1},x_{n}) \\
&\leq
&a_{1}(x)d^{2}(x,x_{n-1})+a_{2}(x)d^{2}(u,x_{n-1})+a_{3}(x)d^{2}(x,x_{n}) \\
&&+k_{1}(x)d^{2}(u,x)
\end{eqnarray*}

implies that

\begin{equation*}
d^{2}(u,x_{n})\leq d^{2}(x,x_{n})+\frac{k_{1}(x)}{1-a_{2}(x)}d^{2}u,x)
\end{equation*}%
then taking limit on $n$ we have

\begin{equation*}
(1-\frac{k_{1}(x)}{1-a_{2}(x)})d(u,x)\leq 0
\end{equation*}

so $u=x\in Tx=\{x\}$
\end{proof}

\begin{theorem}
\label{(teo3.1)}Let $X$ be a complete $CAT(\kappa )$ space, $K$ be a
nonempty, closed and convex subset of $X$ with $rad(K)<\frac{\pi }{2\sqrt{%
\kappa }}\ $and $T$ be $(a_{1},a_{2},b_{1},b_{2})-$ multivalued hybrid
mapping type I\ from $K$ to $C(K)$ then $F(T)\neq \emptyset $ .
\end{theorem}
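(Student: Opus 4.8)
The plan is to produce a bounded orbit, take its unique asymptotic center $z$, and then show that \emph{every} point of $Tz$ coincides with $z$, so that $z\in Tz$. First I would fix any $x_{1}\in K$ and, since each $Tx_{n}$ is a nonempty subset of $C(K)\subseteq K$, recursively choose $x_{n+1}\in Tx_{n}$; the resulting orbit $\{x_{n}\}$ lies in $K$ and is bounded because $rad(K)<\frac{\pi}{2\sqrt{\kappa}}<\infty$. Its asymptotic radius then satisfies $r_{K}(\{x_{n}\})\le rad(K)<\frac{\pi}{2\sqrt{\kappa}}$, so Proposition \ref{(prop2.9)} guarantees that the asymptotic center is a single point $z$, while Lemma \ref{(lemma2.10)}(ii) places $z\in K$. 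Write $r:=r(\{x_{n}\})=r(z,\{x_{n}\})=\limsup_{n\to\infty}d(z,x_{n})$.

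The core step is to test the type I inequality at the \emph{fixed} base point $x=z$, so that the coefficients $a_{1}(z),a_{2}(z),b_{1}(z),b_{2}(z)$ become genuine constants. Taking an arbitrary $u\in Tz$, the second point $y=x_{n}$, and the admissible choice $v=x_{n+1}\in Tx_{n}$, the defining inequality yields for every $n$
\begin{equation*}
a_{1}(z)\,d^{2}(u,x_{n+1})+a_{2}(z)\,d^{2}(u,x_{n})\le b_{1}(z)\,d^{2}(z,x_{n+1})+b_{2}(z)\,d^{2}(z,x_{n}).
\end{equation*}
Since $b_{1}(z),b_{2}(z)\in[0,1]$ with $b_{1}(z)+b_{2}(z)\le 1$, and since shifting the index does not change a $\limsup$, the right-hand side has $\limsup$ at most $(b_{1}(z)+b_{2}(z))r^{2}\le r^{2}$. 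Writing $R_{u}:=\limsup_{n\to\infty}d(u,x_{n})=\limsup_{n\to\infty}d(u,x_{n+1})$, the goal is to extract from this the single inequality $R_{u}\le r$.

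The hard part will be controlling the $\limsup$ of the left-hand side, because the constraint $a_{1},a_{2}:X\to\mathbb{R}\setminus(0,1)$ allows one of $a_{1}(z),a_{2}(z)$ to be nonpositive, so I cannot simply bound a sum of $\limsup$'s from below. The way around it is to exploit that $a_{1}(z)+a_{2}(z)\ge 1$ forces at most one coefficient to be $\le 0$ and the other to be $\ge 1$, and to pass to a subsequence $(n_{k})$ along which all four distances $d(u,x_{n_{k}}),d(u,x_{n_{k}+1}),d(z,x_{n_{k}}),d(z,x_{n_{k}+1})$ converge, choosing $(n_{k})$ to \emph{realize} whichever of $\limsup d(u,x_{n})$ or $\limsup d(u,x_{n+1})$ is multiplied by the larger coefficient. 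In each sign case the term with a nonpositive coefficient is then bounded below using that its limit is $\le R_{u}^{2}$ (multiplication by a nonpositive number reverses the inequality), and the term with coefficient $\ge 1$ contributes its full $R_{u}^{2}$; combining gives $(a_{1}(z)+a_{2}(z))R_{u}^{2}\le r^{2}$, hence $R_{u}^{2}\le r^{2}$.

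Finally I would close the argument by uniqueness of the asymptotic center: the inequality $r(u,\{x_{n}\})=R_{u}\le r=r(\{x_{n}\})$ together with the minimality defining $r(\{x_{n}\})$ forces $r(u,\{x_{n}\})=r(\{x_{n}\})$, so $u\in A(\{x_{n}\})=\{z\}$ and therefore $u=z$. As $u\in Tz$ was arbitrary, this shows $Tz=\{z\}$, in particular $z\in Tz$, so $z\in F(T)$ and $F(T)\neq\emptyset$.
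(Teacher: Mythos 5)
Your proposal is correct and follows essentially the same route as the paper's own proof: generate an orbit $x_{n+1}\in Tx_{n}$, take the unique asymptotic center $z$ of the orbit relative to $K$, apply the type I inequality with base point $z$ against the orbit, pass to the limit superior to get $\limsup_{n}d(u,x_{n})\leq\limsup_{n}d(z,x_{n})$ for each $u\in Tz$, and conclude $u=z$ by uniqueness of the asymptotic center. The only difference is that you justify two steps the paper leaves implicit, namely that $rad(K)<\frac{\pi}{2\sqrt{\kappa}}$ makes the asymptotic center a singleton (via Proposition \ref{(prop2.9)}) and that the limsup step survives a possibly nonpositive coefficient $a_{1}(z)$ or $a_{2}(z)$ (via your subsequence and sign-case argument), which is a genuine gap-filling refinement rather than a different method.
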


\begin{proof}
Let $x_{0}\in K$ and $x_{n}\in Tx_{n-1}$ for all $n\in 
\mathbb{N}
.$Assume that $A_{C}\{x_{n}\}=\{z\}$ then $z\in K$ by Lemma \ref{(lemma2.10)}%
. Since $T$ is type I, for all $n\in 
\mathbb{N}
$ and for any $u_{\text{ }}\in Tz$ such that 
\begin{equation*}
a_{1}(z)d^{2}(u,x_{n})+a_{2}(z)d^{2}(u,x_{n-1})\leq
b_{1}(z)d^{2}(z,x_{n})+b_{2}(z)d^{2}(z,x_{n-1})
\end{equation*}%
and taking limit superior on both side which implies that 
\begin{equation*}
\limsup_{n\rightarrow \infty }d^{2}(u,x_{n})\leq \limsup_{n\rightarrow
\infty }d^{2}(z,x_{n}).
\end{equation*}%
hencet $z=u\in Tz=\{u\}.$
\end{proof}

\begin{corollary}
Let $X$ be a complete $CAT(\kappa )$ space, $K$ be a nonempty closed convex
subset of $X$ with $rad(K)<\frac{\pi }{2\sqrt{\kappa }}\ $and $T$ be $%
(a_{1},a_{2},b_{1},b_{2})-$ hybrid mapping \ from $K$ to $K$ then $F(T)\neq
\emptyset $ .

\begin{proof}
Let take $F=\{T(x)\},$ then $F$ is $(a_{1},a_{2},b_{1},b_{2})-$multivalued
hybrid mapping \ from $K$ to $C(K)$.Hence $F$ has at least one fixed point
and so $T$ by Theorem \ref{(teo3.1)}
\end{proof}
\end{corollary}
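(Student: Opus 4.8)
The plan is to deduce this single-valued statement from Theorem \ref{(teo3.1)} by viewing $T$ as a singleton-valued multivalued mapping, exactly as the author's remark anticipates. First I would define a mapping $F\colon K\to C(K)$ by $F(x)=\{Tx\}$. Since $T$ sends $K$ into $K$, every value $Tx$ lies in $K$, and the singleton $\{Tx\}$ is a nonempty compact convex (in particular closed and convex) subset of $K$; hence $F$ is a well-defined mapping from $K$ into $C(K)$, so it is a legitimate candidate for the hypothesis of Theorem \ref{(teo3.1)}.

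The second step is to verify that $F$ is a $(a_{1},a_{2},b_{1},b_{2})$-multivalued hybrid mapping of type I with the very same coefficient functions $a_{1},a_{2},b_{1},b_{2}$. For $F$ the only admissible selections are $u=Tx\in F(x)$ and $v=Ty\in F(y)$, so the defining inequality for $F$,
\[
a_{1}(x)d^{2}(u,v)+a_{2}(x)d^{2}(u,y)\le b_{1}(x)d^{2}(x,v)+b_{2}(x)d^{2}(x,y),
\]
collapses to
\[
a_{1}(x)d^{2}(Tx,Ty)+a_{2}(x)d^{2}(Tx,y)\le b_{1}(x)d^{2}(x,Ty)+b_{2}(x)d^{2}(x,y),
\]
which is precisely the single-valued $(a_{1},a_{2},b_{1},b_{2})$-hybrid inequality assumed for $T$. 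The structural constraints on the coefficients, namely $a_{1}(x)+a_{2}(x)\ge 1$ and $b_{1}(x)+b_{2}(x)\le 1$ together with their prescribed ranges, are identical in the single-valued and the multivalued definitions, so they transfer verbatim. Thus $F$ satisfies every hypothesis of Theorem \ref{(teo3.1)} on the set $K$, which is nonempty, closed and convex with $rad(K)<\frac{\pi}{2\sqrt{\kappa}}$ by assumption.

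Finally, applying Theorem \ref{(teo3.1)} to $F$ produces a point $p\in K$ with $p\in F(p)=\{Tp\}$, that is $Tp=p$, and therefore $F(T)\neq\emptyset$. Because the argument is a pure reduction, I expect essentially no analytic obstacle: the only point requiring genuine care is the bookkeeping that $F$ truly lands in $C(K)$ and that the admissibility conditions on $a_{1},a_{2},b_{1},b_{2}$ carry over unchanged from the single-valued hypothesis to the multivalued one. Once these routine checks are recorded, the conclusion is immediate from the multivalued existence theorem.
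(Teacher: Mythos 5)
Your proposal is correct and is essentially identical to the paper's own proof: both define $F(x)=\{Tx\}$, observe that the singleton-valued map $F$ satisfies the type I multivalued hybrid inequality with the same coefficients, and apply Theorem \ref{(teo3.1)} to obtain a fixed point $p\in F(p)=\{Tp\}$. Your write-up merely makes explicit the routine checks (that singletons lie in $C(K)$ and that the coefficient conditions transfer) which the paper leaves implicit.
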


\qquad Since for any $\kappa >\kappa ^{\prime },$ $CAT(\kappa ^{\prime })$
space is $CAT(\kappa )$ then following corollaries holds.

\begin{corollary}
Let $X$ be a complete $CAT(0)$ space, $K$ be a nonempty closed convex subset
of $X$ and $T$ be $(a_{1},a_{2},b_{1},b_{2})-$ multivalued hybrid mapping
type I\ from $K$ to $C(K).$ then there is a $x_{0}\in K$ such that the
sequence $\{x_{n}\}$ defined by $x_{n}\in Tx_{n-1}$ for all $n\in 
\mathbb{N}
$ is bounded if and only if $F(T)\neq \emptyset $.
\end{corollary}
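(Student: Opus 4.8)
The plan is to establish the two implications of the equivalence separately, the backward implication being essentially immediate and the forward one mirroring the argument of Theorem \ref{(teo3.1)}, with the asymptotic centre now supplied by boundedness of the orbit rather than by the radius bound $rad(K)<\pi/(2\sqrt{\kappa})$ used there.

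For the backward direction, suppose $F(T)\neq\emptyset$ and fix $p\in F(T)$, so $p\in Tp$. Applying the type I inequality at $x=y=p$ with $v=p\in Tp$ gives, for every $u\in Tp$,
\begin{equation*}
(a_{1}(p)+a_{2}(p))\,d^{2}(u,p)\leq b_{1}(p)d^{2}(p,p)+b_{2}(p)d^{2}(p,p)=0,
\end{equation*}
and since $a_{1}(p)+a_{2}(p)\geq 1>0$ this forces $u=p$, i.e. $Tp=\{p\}$. Choosing $x_{0}=p$ then yields $x_{n}\in Tx_{n-1}=\{p\}$, so $x_{n}=p$ for all $n$ and the orbit is constant, hence bounded. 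Thus the existence of a fixed point already produces a bounded orbit, and no curvature/radius hypothesis is needed for this half.

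For the forward direction, assume some orbit $\{x_{n}\}$ with $x_{n}\in Tx_{n-1}$ is bounded. Since $X$ is a complete $CAT(0)$ space and $K$ is closed and convex, the bounded sequence $\{x_{n}\}$ has a unique asymptotic centre $A_{C}(\{x_{n}\})=\{z\}$ with $z\in K$ by Lemma \ref{(lemma2.10)}. I would then feed $z$ into the type I inequality exactly as in Theorem \ref{(teo3.1)}: for any $u\in Tz$, taking $x=z$, $y=x_{n-1}$ and $v=x_{n}\in Tx_{n-1}$ gives
\begin{equation*}
a_{1}(z)d^{2}(u,x_{n})+a_{2}(z)d^{2}(u,x_{n-1})\leq b_{1}(z)d^{2}(z,x_{n})+b_{2}(z)d^{2}(z,x_{n-1}).
\end{equation*}
Passing to the upper limit and using $b_{1}(z)+b_{2}(z)\leq 1$ with $b_{1},b_{2}\geq 0$ on the right, $a_{1}(z)+a_{2}(z)\geq 1$ on the left, and the invariance of a limit superior under an index shift, I would extract $\limsup_{n}d^{2}(u,x_{n})\leq\limsup_{n}d^{2}(z,x_{n})$, equivalently $r(u,\{x_{n}\})\leq r(z,\{x_{n}\})=r(\{x_{n}\})$. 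As $u\in Tz\subseteq K$ and $z$ is the unique minimiser of $r(\cdot,\{x_{n}\})$, this equality of asymptotic radii forces $u\in A_{C}(\{x_{n}\})=\{z\}$, whence $u=z$. Since $u\in Tz$ was arbitrary, $Tz=\{z\}$ and $z\in F(T)$, so $F(T)\neq\emptyset$.

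The step I expect to require the most care is the passage to the limit superior. Because the coefficients are only constrained by $a_{1}(z),a_{2}(z)\in\mathbb{R}\setminus(0,1)$ with $a_{1}(z)+a_{2}(z)\geq 1$ (so one may be negative) while the left-hand side couples the two shifted distances $d^{2}(u,x_{n})$ and $d^{2}(u,x_{n-1})$, one cannot split the limsup additively. I would instead pass to a subsequence along which the relevant distances converge, choosing it to realise $\limsup_{n}d^{2}(u,x_{n})$ in whichever summand carries a coefficient $\geq 1$ (at least one does, since their sum is $\geq 1$ and neither lies in $(0,1)$), and bound the remaining summand from below using nonnegativity of the squared distance and the sign of its coefficient; a short case check then yields $\limsup_{n}\big(a_{1}(z)d^{2}(u,x_{n})+a_{2}(z)d^{2}(u,x_{n-1})\big)\geq (a_{1}(z)+a_{2}(z))\limsup_{n}d^{2}(u,x_{n})\geq\limsup_{n}d^{2}(u,x_{n})$. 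A secondary point is uniqueness of the asymptotic centre in the $CAT(0)$ rather than $CAT(\kappa)$, $\kappa>0$, setting; this is standard and, if desired, can be read off from Proposition \ref{(prop2.9)} by regarding $X$ as a $CAT(\kappa)$ space for $\kappa$ small enough that $r(\{x_{n}\})<\pi/(2\sqrt{\kappa})$.
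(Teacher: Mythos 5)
Your proposal is correct and follows essentially the route the paper intends: the paper states this corollary without a proof of its own, as a consequence of Theorem \ref{(teo3.1)} together with the remark that a $CAT(0)$ space is $CAT(\kappa )$ for every $\kappa >0$, and your forward direction is exactly the proof of Theorem \ref{(teo3.1)} with the unique asymptotic centre now supplied by boundedness of the orbit (via Lemma \ref{(lemma2.10)}, or Proposition \ref{(prop2.9)} after choosing $\kappa $ small enough) instead of by $rad(K)<\frac{\pi }{2\sqrt{\kappa }}$, while your backward direction makes explicit the observation $Tp=\{p\}$ that the paper leaves implicit. One small correction: the intermediate inequality $\limsup_{n}\big(a_{1}(z)d^{2}(u,x_{n})+a_{2}(z)d^{2}(u,x_{n-1})\big)\geq (a_{1}(z)+a_{2}(z))\limsup_{n}d^{2}(u,x_{n})$ you display is false in general (take $a_{1}(z)=a_{2}(z)=1$ and $d^{2}(u,x_{n})$ alternating between $0$ and $1$, so the left side is $1$ and the right side is $2$), but the weaker bound $\limsup_{n}\big(a_{1}(z)d^{2}(u,x_{n})+a_{2}(z)d^{2}(u,x_{n-1})\big)\geq \limsup_{n}d^{2}(u,x_{n})$, which is all your argument uses, does follow from precisely the subsequence case check you describe, so the proof stands.
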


\begin{corollary}
Let $X$ be a complete $CAT(0)$ space, $K$ be a nonempty closed convex subset
of $X$ and $T$ be $(a_{1},a_{2},b_{1},b_{2})-$hybrid mapping \ from $K$ to $%
K.$ then there is a $x_{0}\in K$ such that the sequence $\{x_{n}\}$ defined
by $x_{n}=Tx_{n-1}$ for all $n\in 
\mathbb{N}
$ is bounded if and only if $F(T)\neq \emptyset $.
\end{corollary}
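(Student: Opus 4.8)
The plan is to reduce this statement to the immediately preceding corollary for multivalued type~I hybrid mappings, exactly in the spirit of the singleton reduction used right after Theorem~\ref{(teo3.1)}. Given the single-valued $(a_{1},a_{2},b_{1},b_{2})$-hybrid mapping $T:K\rightarrow K$, I would define a multivalued mapping $\widetilde{T}:K\rightarrow C(K)$ by $\widetilde{T}(x)=\{Tx\}$. Since each value is a singleton, the defining inequality for $\widetilde{T}$ becomes $a_{1}(x)d^{2}(Tx,Ty)+a_{2}(x)d^{2}(Tx,y)\leq b_{1}(x)d^{2}(x,Ty)+b_{2}(x)d^{2}(x,y)$, which is precisely the hypothesis on $T$. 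Hence $\widetilde{T}$ is a $(a_{1},a_{2},b_{1},b_{2})$-multivalued hybrid mapping type~I from $K$ to $C(K)$.

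Next I would record the two elementary correspondences that make the reduction exact. The multivalued orbit $x_{n}\in\widetilde{T}x_{n-1}$ is \emph{forced} to satisfy $x_{n}=Tx_{n-1}$, since $\widetilde{T}x_{n-1}=\{Tx_{n-1}\}$ has a single element; thus the Picard sequence of $T$ is the unique admissible orbit of $\widetilde{T}$. Moreover $x\in\widetilde{T}x=\{Tx\}$ holds if and only if $x=Tx$, so $F(\widetilde{T})=F(T)$. With these two identifications in hand, the biconditional supplied by the preceding corollary for $\widetilde{T}$ transfers verbatim to $T$: the orbit $\{x_{n}\}$ is bounded if and only if $F(\widetilde{T})=F(T)\neq\emptyset$.

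It then remains only to invoke that preceding corollary. The forward implication is immediate: if $p\in F(T)$, taking $x_{0}=p$ gives the constant, hence bounded, sequence $x_{n}=p$. The genuine content lies in the reverse implication, and this is where the real work sits --- not in the single-valued reduction, which is purely formal, but in the boundedness-to-fixed-point argument internal to the multivalued corollary. There one passes from the bounded orbit $\{x_{n}\}$ to its asymptotic center $A_{C}(\{x_{n}\})=\{z\}$, uses Lemma~\ref{(lemma2.10)}(ii) to guarantee $z\in K$, and then runs the limit-superior estimate of Theorem~\ref{(teo3.1)} to conclude $z\in Tz$.

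Accordingly, the only point I would flag as requiring care --- though it is already absorbed into the result I am permitted to assume --- is the passage from $CAT(\kappa)$ with $\kappa>0$ to $CAT(0)$. The existence machinery behind Theorem~\ref{(teo3.1)} needs the radius bound $rad(K)<\pi/(2\sqrt{\kappa})$, but this is automatically arrangeable in $CAT(0)$: a $CAT(0)$ space is $CAT(\kappa)$ for every $\kappa>0$, and a bounded orbit lies in a set of finite radius, so one may fix $\kappa$ small enough that $\pi/(2\sqrt{\kappa})$ exceeds that radius. Granting this (together with uniqueness of the asymptotic center of a bounded sequence, so that $A_{C}(\{x_{n}\})$ is a single point), the reverse implication goes through, and combining it with the trivial forward implication establishes the equivalence.
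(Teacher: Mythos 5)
Your proposal is correct and takes essentially the same route as the paper: the paper justifies this corollary by the very singleton reduction $F=\{T(x)\}$ it used for the earlier single-valued corollary of Theorem \ref{(teo3.1)}, combined with its remark that a complete $CAT(0)$ space is $CAT(\kappa )$ for every $\kappa >0$, so the preceding multivalued $CAT(0)$ corollary transfers verbatim. Your additional observation --- that one cannot demand $rad(K)<\pi /(2\sqrt{\kappa })$ for a possibly unbounded $K$, and must instead choose $\kappa $ so that the \emph{bounded orbit's} asymptotic radius falls below the threshold of Proposition \ref{(prop2.9)} before re-running the limit-superior estimate of Theorem \ref{(teo3.1)} --- is precisely the detail the paper leaves implicit, not a departure from its argument.
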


\begin{theorem}
\label{(teo3.2)}Let $\kappa >0$ and $X$ be a complete $CAT(\kappa )$ space
and $K$ be a nonempty closed convex subset of $X$ with $rad(K)\leq \frac{\pi
-\varepsilon }{2\sqrt{\kappa }}$ for some $\varepsilon \in (0,\pi /2)$ and $%
T:K\rightarrow C(K)$ be a generalized multivalued hybrid mapping type I with 
$k_{1}(x)=k_{2}(x)=0$ for all $x\in K$ then $F(T)\neq \emptyset $.
\end{theorem}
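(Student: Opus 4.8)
The plan is to follow the asymptotic-center scheme already used in Theorem \ref{(teo3.1)}, adapted to the weaker contractive data of a generalized type I mapping with $k_1\equiv k_2\equiv 0$. First I fix $x_0\in K$ and pick $x_n\in Tx_{n-1}$ for every $n$, so that $\{x_n\}$ is an orbit of $T$ lying in $K$. Since $rad(K)\le \frac{\pi-\varepsilon}{2\sqrt{\kappa}}<\frac{\pi}{2\sqrt{\kappa}}$, the set $K$ is bounded and hence $\{x_n\}$ is bounded with $r_K(\{x_n\})\le rad(K)<\frac{\pi}{2\sqrt{\kappa}}$. Proposition \ref{(prop2.9)} then gives that $A(\{x_n\})$ is a single point $z$, and Lemma \ref{(lemma2.10)}(ii) places $z\in K$. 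The whole proof reduces to showing $z\in Tz$.

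Second, I would exploit the defining inequality on the pair $(z,x_{n-1})$. With $k_1(z)=k_2(z)=0$ it reads
\begin{equation*}
d^{2}(u,v)\le a_{1}(z)d^{2}(z,x_{n-1})+a_{2}(z)d^{2}(u,x_{n-1})+a_{3}(z)d^{2}(z,v)
\end{equation*}
for suitable $u\in Tz$ and $v\in Tx_{n-1}$. Taking $v=x_{n}\in Tx_{n-1}$ and writing $R_{z}=\limsup_{n}d^{2}(z,x_{n})$, $R_{u}=\limsup_{n}d^{2}(u,x_{n})$, I pass to the upper limit; because a shift of the index leaves a limsup unchanged and $t\mapsto t^{2}$ commutes with limsup on nonnegative sequences, subadditivity of $\limsup$ yields
\begin{equation*}
R_{u}\le \big(a_{1}(z)+a_{3}(z)\big)R_{z}+a_{2}(z)R_{u},
\end{equation*}
whence $R_{u}\le \frac{a_{1}(z)+a_{3}(z)}{1-a_{2}(z)}R_{z}$. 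The hypothesis $a_{1}(z)+a_{2}(z)+a_{3}(z)<1$ makes this coefficient a constant $c<1$, and $a_2(z)<1$ guarantees $1-a_2(z)>0$.

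Third comes the conclusion. Since $z$ is the asymptotic center, $r(z,\{x_n\})\le r(u,\{x_n\})$, that is $R_{z}\le R_{u}$; combined with $R_{u}\le cR_{z}$ this forces $R_{z}\le cR_z$, hence $R_{z}=0$. Thus $x_n\to z$ and also $R_u=0$, so $u=z$; as $u\in Tz$ we obtain $z\in Tz$ and $F(T)\neq\emptyset$. Note this simultaneously shows the orbit converges to the fixed point, which is consistent with the strict constant $c<1$.

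The step I expect to be the real obstacle is the passage $v=x_n$ together with the choice of a single $u\in Tz$. Unlike the non-generalized mapping of Theorem \ref{(teo3.1)}, whose inequality holds for \emph{all} $u\in Tz$ and $v\in Tx_{n-1}$, the generalized type I condition is existential: for each $n$ it supplies only \emph{some} $u_n\in Tz$ and \emph{some} $v_n\in Tx_{n-1}$, which need not be the orbit point $x_n$ nor be independent of $n$. To make the limsup computation legitimate I would use compactness of $Tz$ (available if $C(K)$ denotes compact convex sets, and otherwise to be argued separately) to extract $u_{n_k}\to u\in Tz$ and pass the inequality to the limit along that subsequence, using that $d(u_{n_k},\cdot)\to d(u,\cdot)$ uniformly; the terms $d^{2}(z,v_{n_k})$ and $d^{2}(u,v_{n_k})$ must then be controlled through the orbit. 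Handling this selection carefully, rather than the algebra of the contraction constant, is where the work lies.
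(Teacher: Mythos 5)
Your proposal follows essentially the same asymptotic-center argument as the paper's own proof: take an orbit $x_{n}\in Tx_{n-1}$, let $z$ be the unique asymptotic center (which lies in $K$), apply the hybrid inequality with $k_{1}=k_{2}=0$ to the pair $(z,x_{n-1})$, and run the limsup computation with the coefficient $\frac{a_{1}(z)+a_{3}(z)}{1-a_{2}(z)}<1$ to force $u=z\in Tz$; your closing step (minimality of $z$ giving $R_{z}\le R_{u}\le cR_{z}$, hence $R_{z}=0$) is only a cosmetic variant of the paper's appeal to uniqueness of the asymptotic center. The selection obstacle you flag at the end — the definition is existential, so the point $v\in Tx_{n-1}$ it supplies need not be the orbit point $x_{n}$, and the point $u\in Tz$ may vary with $n$, making a limsup against a fixed $u$ unjustified — is a genuine issue, but the paper's own proof silently commits exactly this step, so your attempt is, if anything, more careful than the published argument.
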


\begin{proof}
Let $x_{0}\in K$ and $x_{n}\in Tx_{n-1}$ for all $n\in 
\mathbb{N}
.$Assume that $A_{C}\{x_{n}\}=\{z\}$ then $z\in K$ by Lemma \ref{(lemma2.10)}%
. Since $T$ is type I, for all $n\in 
\mathbb{N}
$ we can \ find a $u_{\text{ }}\in Tz$ such that multivalued hybrid mapping 
\begin{equation*}
d^{2}(x_{n},u)\leq
a_{1}(z)d^{2}(x_{n-1},z)+a_{2}(z)d^{2}(x_{n-1},u)+a_{3}(z)d^{2}(x_{n},z)
\end{equation*}%
and by taking limit superior on both side we get 
\begin{eqnarray*}
\limsup_{n\rightarrow \infty }d^{2}(x_{n},u) &\leq
&(a_{1}(z)+a_{3}(z))\limsup_{n\rightarrow \infty }d^{2}(x_{n},z) \\
&&+a_{2}(z)\limsup_{n\rightarrow \infty }d^{2}(x_{n-1},u)
\end{eqnarray*}%
implies that $\limsup_{n\rightarrow \infty }d(x_{n},u)\leq
\limsup_{n\rightarrow \infty }d(x_{n},z)$ which implies that $z=u\in Tz.$
\end{proof}

\begin{corollary}
Let $\kappa >0$ and $X$ be a complete $CAT(\kappa )$ space, $K$ be a
nonempty closed convex subset of $X$ with $rad(K)\leq \frac{\pi -\varepsilon 
}{2\sqrt{\kappa }}$ for some $\varepsilon \in (0,\pi /2)$ and $%
T:K\rightarrow K$ be a generalized hybrid mapping type I with $%
k_{1}(x)=k_{2}(x)=0$ for all $x\in K$ then $F(T)\neq \emptyset $.
\end{corollary}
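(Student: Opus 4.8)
The plan is to reduce this single-valued corollary to the multivalued Theorem \ref{(teo3.2)} by regarding $T$ as a singleton-valued map, exactly as in the reduction used for the corollary following Theorem \ref{(teo3.1)}. First I would define a multivalued map $F:K\rightarrow C(K)$ by $F(x)=\{T(x)\}$. Since $T$ maps $K$ into $K$, each value $\{T(x)\}$ is a nonempty, closed and convex subset of $K$, so $F$ is a well-defined map into $C(K)$, and the fixed-point sets satisfy $F(F)=F(T)$ because $p\in F(p)=\{T(p)\}$ is equivalent to $T(p)=p$.

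Next I would verify that $F$ is a generalized multivalued hybrid mapping type I with $k_{1}(x)=k_{2}(x)=0$, inheriting the coefficient functions $a_{1},a_{2},a_{3}$ of $T$. The multivalued definition requires that for each pair $x,y$ there exist selections $u\in F(x)$ and $v\in F(y)$ satisfying the defining inequality. Because $F(x)=\{T(x)\}$ and $F(y)=\{T(y)\}$ are singletons, the only admissible choices are $u=T(x)$ and $v=T(y)$, and with these selections the multivalued inequality
\[
d^{2}(u,v)\leq a_{1}(x)d^{2}(x,y)+a_{2}(x)d^{2}(u,y)+a_{3}(x)d^{2}(x,v)
\]
coincides verbatim with the single-valued generalized hybrid inequality for $T$ with $k_{1}=k_{2}=0$. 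Thus the hypothesis on $T$ supplies exactly this inequality, and $F$ meets the definition with the same coefficient constraints, in particular $a_{1}(x)+a_{2}(x)+a_{3}(x)<1$.

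Applying Theorem \ref{(teo3.2)} to $F$, whose hypotheses on $\kappa$, completeness of $X$, the set $K$, the bound $rad(K)\leq\frac{\pi-\varepsilon}{2\sqrt{\kappa}}$, and $k_{1}=k_{2}=0$ are all satisfied, yields $F(F)\neq\emptyset$. Since $F(F)=F(T)$ by the equivalence noted above, we conclude $F(T)\neq\emptyset$.

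I do not expect a genuine obstacle here, as all the real content is carried by Theorem \ref{(teo3.2)}. The only point requiring a little care is checking that the singleton-valued selections make the multivalued inequality reduce to the single-valued one, which is immediate precisely because a singleton leaves no freedom in the choice of $u$ and $v$.
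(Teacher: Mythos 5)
Your proposal is correct and is essentially identical to the paper's own proof: the paper likewise sets $F(x)=\{T(x)\}$, observes that $F$ is then a generalized multivalued hybrid mapping type I from $K$ to $C(K)$, and invokes Theorem \ref{(teo3.2)} to conclude $F(T)\neq\emptyset$. Your write-up merely spells out the singleton-selection verification that the paper leaves implicit.
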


\begin{proof}
Let take $F=\{T(x)\},$ then $F$ is multivalued hybrid mapping \ from $K$ to $%
C(K)$.Hence $F$ has at least one fixed point and so $T$ by Theorem \ref%
{(teo3.2)}.
\end{proof}

\begin{corollary}
Let $X$ be a complete $CAT(0)$ space and $K$ be a nonempty closed convex
subset of $X$, and $T:K\rightarrow K(K)$ be a generalized multivalued hybrid
mapping type I with $k_{1}(x)=k_{2}(x)=0$ for all $x\in K$ then there is a $%
x_{0}\in K$ such that the sequence $\{x_{n}\}$ defined by $x_{n}\in Tx_{n-1}$
for all $n\in 
\mathbb{N}
$ is bounded if and only if $F(T)\neq \emptyset $
\end{corollary}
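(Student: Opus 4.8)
The plan is to establish the two directions of the biconditional separately. The forward direction (a fixed point yields a bounded orbit) is immediate, while the converse (a bounded orbit yields a fixed point) carries all the content and will follow the argument of Theorem~\ref{(teo3.2)}, with the hypothesis $rad(K)\le\frac{\pi-\varepsilon}{2\sqrt\kappa}$ replaced by the assumed boundedness of the orbit.

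For the forward direction, assume $F(T)\neq\emptyset$ and fix $p\in F(T)$, so that $p\in Tp$. Setting $x_0=p$ and choosing $x_n=p\in Tx_{n-1}$ at each step produces a constant, hence bounded, sequence satisfying the recursion $x_n\in Tx_{n-1}$; this exhibits the required $x_0\in K$. (One may alternatively invoke the earlier proposition giving $Tp=\{p\}$, which forces this orbit to be the only admissible one starting at $p$.)

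For the converse, suppose $x_0\in K$ and a sequence $\{x_n\}$ with $x_n\in Tx_{n-1}$ is bounded, so that its asymptotic radius $\rho:=r(\{x_n\})$ is finite. Recall (as noted just before these corollaries) that a complete CAT(0) space is a CAT($\kappa$) space for every $\kappa>0$; choosing $\kappa>0$ so small that $\frac{\pi}{2\sqrt\kappa}>\rho$, Proposition~\ref{(prop2.9)} shows that the asymptotic center is a single point $A(\{x_n\})=\{z\}$, and Lemma~\ref{(lemma2.10)}(ii) places $z\in K$ since $K$ is closed and convex. I would then substitute $x=z$ and $y=x_{n-1}$ into the generalized type~I inequality with $k_1=k_2=0$ to produce, for each $n$, a point $u\in Tz$ with
$$d^{2}(x_{n},u)\leq a_{1}(z)d^{2}(x_{n-1},z)+a_{2}(z)d^{2}(x_{n-1},u)+a_{3}(z)d^{2}(x_{n},z).$$
Taking $\limsup_n$ and writing $R=\limsup_n d^{2}(x_n,z)$, $S=\limsup_n d^{2}(x_n,u)$ (the index shift being harmless for a bounded sequence), one obtains $(1-a_2(z))S\leq(a_1(z)+a_3(z))R$; since $a_1(z)+a_2(z)+a_3(z)<1$ the resulting factor lies strictly below $1$, whence $\limsup_n d(x_n,u)\leq\limsup_n d(x_n,z)=\rho$.

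The final step is to observe that $u$ therefore also minimizes $r(\cdot,\{x_n\})$, so by uniqueness of the asymptotic center $u=z$; this gives $z=u\in Tz$ and hence $F(T)\neq\emptyset$. The step I expect to be the main obstacle is the justification of a single-point asymptotic center in a possibly unbounded CAT(0) space: the boundedness of the orbit is precisely what supplies a finite asymptotic radius and thereby lets the CAT($\kappa$) uniqueness machinery (Proposition~\ref{(prop2.9)} together with Lemma~\ref{(lemma2.10)}) take over the role played in Theorem~\ref{(teo3.2)} by the radius restriction. A minor but genuine subtlety is extracting the point $u\in Tz$ from the existential form of the type~I definition and handling the index shift in the $\limsup$, both of which become routine once the center is known to be unique.
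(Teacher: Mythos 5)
Your proposal is correct and follows essentially the route the paper intends: the paper states this corollary without proof, deferring to Theorem \ref{(teo3.2)} together with the preceding remark that a $CAT(\kappa')$ space is $CAT(\kappa)$ for $\kappa>\kappa'$, and your converse direction is precisely Theorem \ref{(teo3.2)}'s asymptotic-center argument, with boundedness of the orbit (in place of the $rad(K)$ hypothesis) supplying the finite asymptotic radius needed for Proposition \ref{(prop2.9)} and Lemma \ref{(lemma2.10)} to yield a unique center $z\in K$. The forward direction via the constant sequence at a fixed point is the trivial half the paper leaves implicit, and you supply it correctly.
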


\begin{corollary}
Let $X$ be a complete $CAT(0)$ space and $K$ be a nonempty closed convex
subset of $X$, and $T:K\rightarrow K$ be a generalized hybrid mapping with $%
k_{1}(x)=k_{2}(x)=0$ for all $x\in K.$Then there is a $x_{0}\in K$ such that
the sequence $\{x_{n}\}$ defined by $x_{n}=Tx_{n-1}$ for all $n\in 
\mathbb{N}
$ is bounded if and only if $F(T)\neq \emptyset $.
\end{corollary}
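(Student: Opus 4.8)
The plan is to deduce this single-valued corollary from the immediately preceding multivalued $CAT(0)$ corollary by regarding $T$ as a multivalued map with one-point values. Concretely, I would define $F:K\rightarrow KC(K)$ by $F(x)=\{Tx\}$. Each value $\{Tx\}$ is a nonempty compact convex subset of $K$, so $F$ is well defined into the family used in that corollary.

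The first step is to check that $F$ is a generalized multivalued hybrid mapping type I with $k_{1}(x)=k_{2}(x)=0$, using the same coefficient functions $a_{1},a_{2},a_{3}$ that $T$ carries as a generalized hybrid mapping. This is immediate: for $x,y\in K$ the sets $F(x)$ and $F(y)$ are singletons, so the only admissible selections are $u=Tx$ and $v=Ty$, and the type I inequality
\begin{equation*}
d^{2}(u,v)\leq a_{1}(x)d^{2}(x,y)+a_{2}(x)d^{2}(u,y)+a_{3}(x)d^{2}(x,v)
\end{equation*}
reduces verbatim to the generalized hybrid inequality for $T$ (with $k_{1}=k_{2}=0$), which holds by hypothesis; the coefficient constraints transfer unchanged. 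Next I would record the two identifications that make the reduction exact. Since $F(x_{n-1})=\{Tx_{n-1}\}$ is a singleton, the only sequence satisfying $x_{n}\in Fx_{n-1}$ is the Picard sequence $x_{n}=Tx_{n-1}$, so boundedness of the single-valued iterate and of the multivalued iterate are the same assertion. Similarly $p\in Fp$ means $p\in\{Tp\}$, i.e. $p=Tp$, whence $F(F)=F(T)$ as subsets of $K$.

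With these observations in place, I would simply invoke the preceding corollary applied to $F$: there exists $x_{0}\in K$ for which $\{x_{n}\}$ is bounded if and only if $F(F)\neq\emptyset$, which is exactly the claim for $T$ after the identifications above. There is no substantive obstacle in this argument; its entire content is the reduction. The only point needing care is verifying that passing to singleton values loses nothing, namely that the unique selections $u=Tx$, $v=Ty$ make the multivalued type I inequality coincide with the single-valued generalized hybrid inequality, and that the iterates and fixed-point sets genuinely match. Once these are noted, both implications follow at once from the multivalued corollary.
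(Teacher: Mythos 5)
Your proposal is correct and is essentially the paper's own (implicit) argument: the paper derives its single-valued corollaries precisely by passing to $F(x)=\{T(x)\}$ and invoking the corresponding multivalued statement (this is verbatim the proof it gives for the $CAT(\kappa)$ single-valued corollaries), and the present corollary is placed immediately after the multivalued $CAT(0)$ corollary so that the same singleton reduction applies. The details you supply --- that the type I inequality, the Picard iterates, and the fixed-point sets all coincide under $F(x)=\{Tx\}$ --- are exactly the steps the paper leaves unstated.
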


\begin{theorem}
Let $\kappa >0$ and $X$ be a complete $CAT(\kappa )$ space with $diam(X)\leq 
\frac{\pi -\varepsilon }{2\sqrt{\kappa }}$ for some $\varepsilon \in (0,\pi
/2)$. Let $K$ be a nonempty closed convex subset of $X$, and $T:K\rightarrow
K(K)$ be a generalized multivalued hybrid mapping type II satisfying either

\begin{enumerate}
\item[i)] $a_{2}(x)=0$ and $\frac{2k_{2}(x)}{1-a_{3}(x)}<\frac{R}{2}$ or

\item[ii)] $a_{3}(x)=0$ and $\frac{2k_{1}(x)}{1-a_{2}(x)}<\frac{R}{2}$
\end{enumerate}

for all $x\in K$ where $R=(\pi -2\varepsilon )tan(\varepsilon )$.Moreover,
If $k=\sup \frac{a_{1}(x)+k_{1}(x)}{1-k_{2}(x)}<1$\ then $F(T)\neq \emptyset 
$.
\end{theorem}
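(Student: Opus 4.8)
The plan is to produce an explicit Picard-type orbit, show it is a geometric (hence Cauchy) sequence, and identify its limit as a fixed point. First I would fix $x_{0}\in K$ and, since every $Tx_{n}\in K(K)$ is compact, inductively choose $x_{n+1}\in Tx_{n}$ realizing the distance to the set, i.e. $d(x_{n},x_{n+1})=d(x_{n},Tx_{n})$. Because $x_{n}\in Tx_{n-1}$, this selection yields the two facts I will use repeatedly: $d(x_{n},x_{n+1})=d(x_{n},Tx_{n})\leq H(Tx_{n-1},Tx_{n})$, and $d(x_{n-1},x_{n})=d(x_{n-1},Tx_{n-1})$ for all $n\geq 1$.

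Next I would derive a one-step contraction. Applying the type II inequality to the pair $(x_{n-1},x_{n})$ and writing the coefficients at $x_{n-1}$, the term $d^{2}(Tx_{n-1},x_{n})$ vanishes since $x_{n}\in Tx_{n-1}$, while $d^{2}(Tx_{n-1},x_{n-1})=d^{2}(x_{n-1},x_{n})$ and $d^{2}(Tx_{n},x_{n})=d^{2}(x_{n},x_{n+1})$. Under hypothesis (ii) we have $a_{3}\equiv 0$, so the surviving terms give $d^{2}(x_{n},x_{n+1})\leq H^{2}(Tx_{n-1},Tx_{n})\leq (a_{1}+k_{1})d^{2}(x_{n-1},x_{n})+k_{2}d^{2}(x_{n},x_{n+1})$, whence $d^{2}(x_{n},x_{n+1})\leq \frac{a_{1}+k_{1}}{1-k_{2}}d^{2}(x_{n-1},x_{n})\leq k\,d^{2}(x_{n-1},x_{n})$ with $k<1$. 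Under hypothesis (i) the extra term $a_{3}d^{2}(x_{n-1},Tx_{n})$ remains; here I would invoke the $R$-convexity granted by Lemma \ref{(lemma2.2)} (with $R=(\pi -2\varepsilon )\tan \varepsilon $) together with the single-valued metric projection onto the compact convex set $Tx_{n}$ from Lemma \ref{(lemma2.12)}, and use the hypothesis $\frac{2k_{2}}{1-a_{3}}<\frac{R}{2}$ to absorb that term against the negative $R$-term, arriving at the same form $d^{2}(x_{n},x_{n+1})\leq q\,d^{2}(x_{n-1},x_{n})$ with $q<1$.

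With a geometric ratio secured, $d(x_{n},x_{n+1})\leq q^{n/2}d(x_{0},x_{1})$ is summable, so $\{x_{n}\}$ is Cauchy; completeness of $X$ and closedness of $K$ then give $x_{n}\rightarrow p\in K$, and in particular $d(x_{n},Tx_{n})=d(x_{n},x_{n+1})\rightarrow 0$. Finally I would verify $p\in Tp$. Applying the type II inequality to $(p,x_{n})$ and passing to the limit, using $d(p,x_{n})\rightarrow 0$, $d(Tx_{n},x_{n})\rightarrow 0$, $d(p,Tx_{n})\leq d(p,x_{n+1})\rightarrow 0$ and $d(Tp,x_{n})\rightarrow d(p,Tp)$, yields $\limsup_{n}H^{2}(Tp,Tx_{n})\leq (a_{2}(p)+k_{1}(p))d^{2}(p,Tp)=:c\,d^{2}(p,Tp)$, and the coefficient constraints of the definition force $c<1$ in both cases ($a_{2}(p)+k_{1}(p)<\tfrac{1+a_{2}(p)}{2}<1$ under (ii); $k_{1}(p)<\tfrac12$ under (i)). Since $x_{n+1}\in Tx_{n}$ gives $d(p,Tp)\leq d(p,x_{n+1})+H(Tx_{n},Tp)$, letting $n\rightarrow \infty$ produces $d(p,Tp)\leq \sqrt{c}\,d(p,Tp)$, hence $d(p,Tp)=0$; as $Tp$ is compact this means $p\in Tp$, so $F(T)\neq \emptyset $.

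The step I expect to be the main obstacle is the control of $a_{3}d^{2}(x_{n-1},Tx_{n})$ under hypothesis (i): condition (ii) annihilates this term outright, but under (i) one must genuinely exploit the $R$-convexity of the space — projecting onto $Tx_{n}$ and quantifying the gain from the $-\frac{R}{2}\lambda (1-\lambda )d^{2}$ term — and then check that the bound $\frac{2k_{2}}{1-a_{3}}<\frac{R}{2}$ is precisely what keeps the net one-step ratio below $1$. Once this single estimate is in place, the remaining steps are routine.
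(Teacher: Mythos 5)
Your overall strategy is the same as the paper's: generate a nearest-point Picard orbit $x_{n+1}\in Tx_{n}$ with $d(x_{n},x_{n+1})=d(x_{n},Tx_{n})$, extract a one-step geometric contraction from the defining inequality, conclude the orbit is Cauchy, and identify the limit as a fixed point. Your case (ii) contraction is correct and matches the stated hypothesis $k=\sup \frac{a_{1}(x)+k_{1}(x)}{1-k_{2}(x)}<1$ exactly, and your closing step is actually cleaner than the paper's: you need only $a_{2}(p)+k_{1}(p)<1$, which is automatic from the definitional constraint $2k_{1}(x)<1-a_{2}(x)$, whereas the paper closes by invoking $R$-convexity in the form $d^{2}(z,Tz)\leq \frac{4}{R}\bigl(d^{2}(x_{n},Tz)+d^{2}(x_{n},z)\bigr)$, and it is precisely there --- not in the contraction --- that hypothesis (i) enters (it makes the coefficient $1-\frac{4k_{2}(z)}{R}-a_{3}(z)$ positive). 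So you have misplaced where the $R$-hypotheses do their work. (Conversely, the paper writes out only case (i) and leaves case (ii) implicit; you did the opposite.)

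The genuine gap is your case (i) contraction, and the mechanism you sketch for it would fail. Keeping the coefficients at $x_{n-1}$ leaves the term $a_{3}(x_{n-1})d^{2}(x_{n-1},Tx_{n})$, and there is no negative $R$-term available to absorb it against: the CN$^{\ast}$/$R$-convexity deficit only appears when one expands a distance to a convex combination, which never occurs in this estimate. The best one can do is $d^{2}(x_{n-1},Tx_{n})\leq d^{2}(x_{n-1},x_{n+1})\leq \frac{4}{R}\bigl(d^{2}(x_{n-1},x_{n})+d^{2}(x_{n},x_{n+1})\bigr)$, which after rearranging gives the ratio $\frac{a_{1}+k_{1}+4a_{3}/R}{1-k_{2}-4a_{3}/R}$; this is below $1$ only if roughly $a_{1}+k_{1}+k_{2}+\frac{8a_{3}}{R}<1$, and since $R\leq 2$ and $a_{3}(x)$ may be arbitrarily close to $1$, the hypotheses do not provide this --- the case (i) bound $\frac{2k_{2}(x)}{1-a_{3}(x)}<\frac{R}{2}$ controls $k_{2}$ against $1-a_{3}$, not $a_{3}$ itself. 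The repair is elementary and is exactly what the paper does: in case (i) apply the defining inequality with the arguments in the opposite order, $x=x_{n}$, $y=x_{n-1}$. Then the $a_{2}$-term $a_{2}(x_{n})d^{2}(Tx_{n},x_{n-1})$ vanishes because $a_{2}\equiv 0$, and the $a_{3}$-term $a_{3}(x_{n})d^{2}(x_{n},Tx_{n-1})$ vanishes because $x_{n}\in Tx_{n-1}$, leaving $d^{2}(x_{n+1},x_{n})\leq \frac{a_{1}(x_{n})+k_{2}(x_{n})}{1-k_{1}(x_{n})}\,d^{2}(x_{n},x_{n-1})$ (the paper's display transposes $k_{1}$ and $k_{2}$ here, a typo). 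This ratio is uniformly below $1$ under the stated hypothesis: from $a_{1}+k_{1}\leq k(1-k_{2})$ and $k_{2}<\frac{1-a_{3}}{2}\leq \frac{1}{2}$ one gets $\frac{a_{1}+k_{2}}{1-k_{1}}\leq k+(1-k)k_{2}\leq \frac{1+k}{2}<1$. With this substitution, the rest of your argument goes through in both cases.
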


\begin{proof}
Let $x_{0}\in K$ and $x_{n+1}\in Tx_{n}$ such that $%
d(x_{n+1},x_{n})=d(Tx_{n},x_{n})$ for all $n\in 
\mathbb{N}
.$ Assume that $a_{2}(x)=0.$Then%
\begin{eqnarray*}
d^{2}(x_{n+1},x_{n}) &=&d^{2}(Tx_{n},x_{n})\leq H^{2}(Tx_{n},Tx_{n-1}) \\
&\leq &a_{1}(x_{n})d^{2}(x_{n},x_{n-1})+a_{3}(x)d^{2}(Tx_{n-1},x_{n}) \\
&&+k_{1}(x_{n})d^{2}(Tx_{n},x_{n})+k_{2}(x_{n})d^{2}(Tx_{n-1},x_{n-1}) \\
&\leq &a_{1}(x_{n})d^{2}(x_{n},x_{n-1})+k_{1}(x_{n})d^{2}(Tx_{n},x_{n}) \\
&&+k_{2}(x_{n})d^{2}(Tx_{n-1},x_{n-1})
\end{eqnarray*}%
implies that%
\begin{equation*}
d^{2}(x_{n+1},x_{n})\leq \frac{a_{1}(x_{n})+k_{1}(x_{n})}{1-k_{2}(x_{n})}%
d^{2}(x_{n},x_{n-1})
\end{equation*}%
hence we have%
\begin{eqnarray*}
d(x_{n+1},x_{n}) &\leq &\sqrt{\frac{a_{1}(x_{n})+k_{1}(x_{n})}{1-k_{2}(x_{n})%
}}d(x_{n},x_{n-1}) \\
&\leq &kd(x_{n},x_{n-1}) \\
&\leq &k^{n}d(x_{1},x_{0}).
\end{eqnarray*}%
Let $n<m,$then%
\begin{eqnarray*}
d(x_{m},x_{n}) &\leq &\sum\nolimits_{i=n}^{i=m-1}d(x_{i+1},x_{i}) \\
&\leq &\sum\nolimits_{i=n}^{i=m-1}k^{i+1}d(x_{1},x_{0}) \\
&\leq &d(x_{1},x_{0})\sum\nolimits_{i=n}^{i=m-1}k^{i+1}.
\end{eqnarray*}%
Since $k<1$ then the sequence $(x_{n})$ is Cauchy sequence and since space
is complete than $x_{n}\rightarrow z\in X.$%
\begin{eqnarray*}
d^{2}(x_{n},Tz) &\leq &H^{2}(Tx_{n-1},Tz) \\
&\leq &a_{1}(z)d^{2}(x_{n-1},z)+a_{3}(z)d^{2}(x_{n-1},Tz) \\
&&+k_{1}(z)d^{2}(Tx_{n-1},x_{n-1})+k_{2}(z)d^{2}(Tz,z)
\end{eqnarray*}%
on the other hand%
\begin{equation*}
d^{2}(x_{n},\frac{1}{2}Tz\oplus \frac{1}{2}z)\leq \frac{1}{2}d^{2}(x_{n},Tz)+%
\frac{1}{2}d^{2}(x_{n},z)-\frac{R}{8}d^{2}(z,Tz)
\end{equation*}%
which implies that%
\begin{equation*}
d^{2}(z,Tz)\leq \frac{4}{R}d^{2}(x_{n},Tz)+\frac{4}{R}d^{2}(x_{n},z)
\end{equation*}%
so we we have%
\begin{eqnarray*}
d^{2}(x_{n},Tz) &\leq &a_{1}(z)d^{2}(x_{n-1},z)+a_{3}(z)d^{2}(x_{n-1},Tz) \\
&&+k_{1}(z)d^{2}(Tx_{n-1},x_{n-1})+k_{2}(z)d^{2}(Tz,z) \\
&\leq
&a_{1}(z)d^{2}(x_{n-1},z)+a_{3}(z)d^{2}(x_{n-1},Tz)+k_{1}(z)d^{2}(Tx_{n-1},x_{n-1})
\\
&&+k_{2}(z)(\frac{4}{R}d^{2}(x_{n},Tz)+\frac{4}{R}d^{2}(x_{n},z))
\end{eqnarray*}%
which implies that%
\begin{equation*}
(1-k_{2}(z)\frac{4}{R}-a_{3}(z))\lim_{n\rightarrow \infty
}d^{2}(x_{n},Tz)\leq 0
\end{equation*}%
so $\lim_{n\rightarrow \infty }d^{2}(x_{n},Tz)=0.$ Hence $d^{2}(z,Tz)\leq
d^{2}(x_{n},z)+d^{2}(x_{n},Tz)\rightarrow \infty $ implies that $z\in Tz.$
\end{proof}

\section{Convergence Results}

\begin{theorem}
\label{(teo4.1)}(Demiclosed principle for $(a_{1},a_{2},b_{1},b_{2})-$%
multivalued hybrid mapping type I )Let $\kappa >0$ and $X$ be a complete $%
CAT(\kappa )$ space with $diam(X)<\frac{\pi }{2\sqrt{\kappa }}$. Let $K$ be
a nonempty closed convex subset of $X$, and $T:K\rightarrow C(X)$ be a $%
(a_{1},a_{2},b_{1},b_{2})-$multivalued hybrid mapping type I. Let$\{x_{n}\}$
be a sequence in $K$ with $\Delta -\lim_{n\rightarrow \infty }x_{n}=z$ and $%
\lim_{n\rightarrow \infty }d(x_{n},Tx_{n})=0$. Then $z\in K$ and $z\in T(z).$
\end{theorem}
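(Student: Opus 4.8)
The plan is to dispatch the two conclusions separately, treating $z\in K$ as an immediate consequence of the general theory and reserving the real work for $z\in T(z)$. Since $\mathrm{diam}(X)<\frac{\pi}{2\sqrt{\kappa}}$, the sequence $\{x_n\}$ is bounded, so Lemma~\ref{(lemma2.10)}(ii) applies: the asymptotic center of a bounded sequence lying in the closed convex set $K$ lies in $K$. Because $\Delta\text{-}\lim_n x_n=z$ means precisely that $A(\{x_n\})=\{z\}$, this already gives $z\in K$, so $Tz$ is defined; as $Tz$ is a nonempty (closed, convex) subset of $X$, I may fix an arbitrary $u\in Tz$ and aim to prove $u=z$.

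Next I would use $\lim_n d(x_n,Tx_n)=0$ to select $y_n\in Tx_n$ with $d(x_n,y_n)\to 0$ (near-minimizers of $d(x_n,\cdot)$ on $Tx_n$ suffice; in fact closed convex subsets are proximinal here by Lemma~\ref{(lemma2.12)}(i)). Applying the type~I inequality with $x=z$, $y=x_n$, the fixed $u\in Tz$ and $v=y_n\in Tx_n$ gives
\[
a_1(z)\,d^2(u,y_n)+a_2(z)\,d^2(u,x_n)\le b_1(z)\,d^2(z,y_n)+b_2(z)\,d^2(z,x_n).
\]
The crucial device is that, since $d(x_n,y_n)\to 0$ and all distances are bounded, $d^2(u,y_n)-d^2(u,x_n)\to 0$ and $d^2(z,y_n)-d^2(z,x_n)\to 0$; rewriting the left side as $(a_1(z)+a_2(z))\,d^2(u,x_n)$ plus a null sequence \emph{before} passing to limits sidesteps the fact that $a_1,a_2$ may be negative. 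Writing $L_u=\limsup_n d^2(u,x_n)$ and $L_z=\limsup_n d^2(z,x_n)$ and taking limit superior, I obtain $(a_1(z)+a_2(z))L_u\le (b_1(z)+b_2(z))L_z$; here the coefficient $a_1(z)+a_2(z)\ge 1>0$ lets the limit superior distribute, and the null terms do not contribute.

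The structural constraints then close the argument: from $a_1(z)+a_2(z)\ge 1$ and $0\le b_1(z)+b_2(z)\le 1$, together with $L_u,L_z\ge 0$, I get $L_u\le(a_1(z)+a_2(z))L_u\le(b_1(z)+b_2(z))L_z\le L_z$, i.e. $r(u,\{x_n\})\le r(z,\{x_n\})$. Since $z$ is the unique asymptotic center, $r(z,\{x_n\})=r(\{x_n\})\le r(u,\{x_n\})$, so the two radii coincide and $u\in A(\{x_n\})=\{z\}$, forcing $u=z$. As $u\in Tz$ was arbitrary, this shows $Tz=\{z\}$ and in particular $z\in T(z)$.

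I expect the main obstacle to be precisely the limit-superior step of the second paragraph: one must combine the two left-hand terms \emph{before} taking $\limsup$, so as not to split a $\limsup$ across terms whose coefficients have unknown sign, and one must check that replacing $y_n$ by $x_n$ introduces only sequences tending to $0$. By contrast, the membership $z\in K$ is routine given Lemma~\ref{(lemma2.10)}, and the final uniqueness-of-center step is immediate once $L_u\le L_z$ is in hand.
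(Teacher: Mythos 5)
Your proof is correct and follows essentially the same route as the paper's: select $y_n\in Tx_n$ with $d(x_n,y_n)=d(x_n,Tx_n)\to 0$, apply the type~I inequality with $x=z$, $y=x_n$, $v=y_n$ and an arbitrary $u\in Tz$, pass to limit superior using $a_1(z)+a_2(z)\ge 1$ and $b_1(z)+b_2(z)\le 1$, and conclude $u=z$ from uniqueness of the asymptotic center. If anything, your handling of the limit-superior step --- rewriting $a_1(z)d^2(u,y_n)+a_2(z)d^2(u,x_n)$ as $(a_1(z)+a_2(z))d^2(u,x_n)$ plus a null sequence \emph{before} taking limits, so that the possibly negative coefficients never multiply separate limsups --- is more careful than the paper's, which only records $\limsup_n d(x_n,u)=\limsup_n d(y_n,u)$ and then asserts the conclusion.
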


\begin{proof}
By Lemma \ref{(lemma2.10)}, $z\in K.$We can find a sequence $\{y_{n}\}$ such
that $d(x_{n},y_{n})=d(x_{n},Tx_{n}),$ so we have $\lim_{n\rightarrow \infty
}d(x_{n},y_{n})=0.$Because of $T$ is $(a_{1},a_{2},b_{1},b_{2})-$multivalued
hybrid mapping type I, for all $u\in Tz$ such that 
\begin{equation*}
a_{1}(z)d^{2}(u,y_{n})+a_{2}(z)d^{2}(u,x_{n})\leq
b_{1}(z)d^{2}(z,y_{n})+b_{2}(z)d^{2}(z,x_{n})
\end{equation*}

Then by triangular inequality we have $d(x_{n},u)\leq
d(x_{n},y_{n})+d(y_{n},u).$ So we have, $\limsup_{n\rightarrow \infty
}d(x_{n},u)\leq \limsup_{n\rightarrow \infty }d(y_{n},u)$ and again since $%
d(y_{n},u)\leq d(y_{n},x_{n})+d(x_{n},u)$ we have $\limsup_{n\rightarrow
\infty }d(y_{n},u)\leq \limsup_{n\rightarrow \infty }d(x_{n},u),$ combining
these we have that $\limsup_{n\rightarrow \infty
}d(x_{n},u)=\limsup_{n\rightarrow \infty }d(y_{n},u)$. So we have that 
\begin{eqnarray*}
a_{1}(z)d^{2}(u,y_{n})+a_{2}(z)d^{2}(u,x_{n}) &\leq
&b_{1}(z)d^{2}(z,y_{n})+b_{2}(z)d^{2}(z,x_{n}) \\
&\leq &b_{1}(z)[d(z,x_{n})+d(x_{n},y_{n})]^{2}+b_{2}(z)d^{2}(x_{n},z)
\end{eqnarray*}

which implies that $\limsup_{n\rightarrow \infty }d(u,x_{n})\leq
\limsup_{n\rightarrow \infty }d(z,x_{n})$ Then $z=u\in Tz.$Assume that $%
a_{2}(z)>0.$
\end{proof}

\begin{corollary}
Let $\kappa >0$ and $X$ be a complete $CAT(\kappa )$ space with $diam(X)<%
\frac{\pi }{2\sqrt{\kappa }}$. Let $K$ be a nonempty closed convex subset of 
$X$, and $T:K\rightarrow X$ be a $(a_{1},a_{2},b_{1},b_{2})-$ hybrid mapping
. Let$\{x_{n}\}$ be a sequence in $K$ with $\Delta -\lim_{n\rightarrow
\infty }x_{n}=z$ and $\lim_{n\rightarrow \infty }d(x_{n},Tx_{n})=0$. Then $%
z\in K$ and $Tz=z.$
\end{corollary}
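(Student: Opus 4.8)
The plan is to reduce this statement to the multivalued demiclosed principle already established in Theorem \ref{(teo4.1)}, in the same way the earlier existence corollaries reduce a single-valued problem to its multivalued version. First I would associate to the single-valued map $T\colon K\rightarrow X$ the multivalued map $F\colon K\rightarrow C(X)$ defined by $F(x)=\{T(x)\}$. Since a singleton is a nonempty closed (in fact compact and convex) set, $F$ genuinely takes values in $C(X)$, so the codomain hypothesis required by Theorem \ref{(teo4.1)} is satisfied.

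Next I would verify that $F$ is a $(a_{1},a_{2},b_{1},b_{2})$-multivalued hybrid mapping type I. The defining inequality for that class demands, for all $x,y$ and every $u\in F(x)$, $v\in F(y)$, that
\begin{equation*}
a_{1}(x)d^{2}(u,v)+a_{2}(x)d^{2}(u,y)\leq b_{1}(x)d^{2}(x,v)+b_{2}(x)d^{2}(x,y).
\end{equation*}
For $F$ the only admissible choices are $u=T(x)$ and $v=T(y)$, so this collapses exactly to the single-valued $(a_{1},a_{2},b_{1},b_{2})$-hybrid inequality that $T$ satisfies by hypothesis, while the sign and summation constraints $a_{1}(x)+a_{2}(x)\geq 1$ and $b_{1}(x)+b_{2}(x)\leq 1$ carry over verbatim.

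Then I would translate the two limiting hypotheses. Because $F(x_{n})=\{T(x_{n})\}$, we have $d(x_{n},F(x_{n}))=d(x_{n},T(x_{n}))$, whence $\lim_{n\rightarrow\infty}d(x_{n},F(x_{n}))=0$; and the $\Delta$-limit condition $\Delta-\lim_{n\rightarrow\infty}x_{n}=z$ is left untouched by the passage to $F$. Thus all the hypotheses of Theorem \ref{(teo4.1)} hold for $F$, and the theorem delivers $z\in K$ together with $z\in F(z)=\{T(z)\}$. Since $F(z)$ is the singleton $\{T(z)\}$, the membership $z\in F(z)$ forces $T(z)=z$, which is precisely the asserted conclusion.

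I expect no serious obstacle: the entire content is the observation that a single-valued hybrid mapping is the special multivalued one whose images are singletons, so that the demiclosed principle transfers directly. The only points needing a line of justification are that $\{T(x)\}\in C(X)$ and that the displacement $d(x_{n},T(x_{n}))$ coincides with $d(x_{n},F(x_{n}))$; both are immediate, and the proof is essentially a verification that the reduction is legitimate rather than a fresh argument.
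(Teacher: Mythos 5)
Your proposal is correct and is precisely the argument the paper intends: the paper states this corollary without proof, but its analogous corollaries in Section 3 (e.g.\ the one following Theorem \ref{(teo3.1)}) are proved by exactly your device of setting $F(x)=\{T(x)\}$ and invoking the multivalued theorem, here Theorem \ref{(teo4.1)}. Nothing further is needed.
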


\begin{corollary}
Let $X$ be a complete $CAT(0)$ space and $K$ be a nonempty closed convex
subset of $X$, and $T:K\rightarrow C(X)$ be a $(a_{1},a_{2},b_{1},b_{2})-$%
multivalued hybrid mapping type I. Let$\{x_{n}\}$ be a bounded sequence in $%
K $ with $\Delta -\lim_{n\rightarrow \infty }x_{n}=z$ and $%
\lim_{n\rightarrow \infty }d(x_{n},Tx_{n})=0$. Then $z\in K$ and $z\in Tz.$
\end{corollary}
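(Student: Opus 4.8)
The plan is to reproduce the argument of Theorem \ref{(teo4.1)} almost verbatim. The only role played by the positive curvature and the bound $diam(X)<\frac{\pi}{2\sqrt{\kappa}}$ in that proof was to invoke Proposition \ref{(prop2.9)} in order to guarantee that the asymptotic center of the bounded sequence is a single point. In a complete $CAT(0)$ space every bounded sequence already has a unique asymptotic center, so here the boundedness of $\{x_n\}$ is precisely what replaces the curvature and diameter hypotheses. Since Lemma \ref{(lemma2.10)} and all the metric estimates below make no reference to $\kappa$, they transfer unchanged to the $CAT(0)$ setting.

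First I would observe that $z\in K$. As $\{x_n\}$ is bounded and $\Delta$-converges to $z$, the point $z$ is the unique asymptotic center $A(\{x_n\})$, and by Lemma \ref{(lemma2.10)}(ii) this center lies in the closed convex set $K$. Next, for each $n$ I would pick a nearest point $y_n\in Tx_n$ with $d(x_n,y_n)=d(x_n,Tx_n)$, so that the hypothesis $\lim_{n\to\infty}d(x_n,Tx_n)=0$ gives $\lim_{n\to\infty}d(x_n,y_n)=0$.

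Fixing an arbitrary $u\in Tz$ and applying the type I inequality with $x=z$, $y=x_n$ and $v=y_n\in Tx_n$, I would obtain
\begin{equation*}
a_1(z)d^2(u,y_n)+a_2(z)d^2(u,x_n)\le b_1(z)d^2(z,y_n)+b_2(z)d^2(z,x_n).
\end{equation*}
Because $d(x_n,y_n)\to 0$ and all the distances involved stay bounded, two applications of the triangle inequality yield $\limsup_n d(x_n,u)=\limsup_n d(y_n,u)$ and $\limsup_n d(z,y_n)=\limsup_n d(z,x_n)$, whence $d^2(u,y_n)-d^2(u,x_n)\to 0$ and $d^2(z,y_n)-d^2(z,x_n)\to 0$. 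Passing to $\limsup$ in the displayed inequality and using $a_1(z)+a_2(z)\ge 1$ together with $b_1(z)+b_2(z)\le 1$, I would arrive at
\begin{equation*}
\limsup_{n\to\infty}d(u,x_n)\le \limsup_{n\to\infty}d(z,x_n)=r(\{x_n\}).
\end{equation*}

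Since the asymptotic radius satisfies $r(\{x_n\})\le\limsup_n d(u,x_n)$ by definition, the point $u$ also realizes the asymptotic radius, so $u\in A(\{x_n\})=\{z\}$ and hence $u=z$. As $u\in Tz$ was arbitrary, this gives $Tz=\{z\}$, and in particular $z\in Tz$. The main obstacle is really the first paragraph: one must be certain that uniqueness of the asymptotic center persists in $CAT(0)$ once $\{x_n\}$ is assumed bounded, a point for which no diameter restriction is needed. Once that standard fact is in hand, every remaining step is the same routine metric bookkeeping already performed in Theorem \ref{(teo4.1)}.
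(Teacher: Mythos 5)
Your proposal is correct and is essentially the paper's own route: the corollary is just Theorem \ref{(teo4.1)} with the curvature/diameter hypotheses dropped, and you rerun that proof verbatim, correctly observing that boundedness of $\{x_n\}$ (together with the hypothesized $\Delta$-convergence, which already forces $A(\{x_n\})=\{z\}$) supplies everything the diameter bound was needed for. The limsup bookkeeping with $a_1(z)+a_2(z)\geq 1$ and $b_1(z)+b_2(z)\leq 1$ matches the paper's argument step for step.
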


\begin{corollary}
Let $X$ be a complete $CAT(0)$ space and $K$ be a nonempty closed convex
subset of $X$, and $T:K\rightarrow X$ be a $(a_{1},a_{2},b_{1},b_{2})-$
hybrid mapping. Let $\{x_{n}\}$ be a bounded sequence in $K$ with $\Delta
-\lim_{n\rightarrow \infty }x_{n}=z$ and $\lim_{n\rightarrow \infty
}d(x_{n},Tx_{n})=0$. Then $z\in K$ and $Tz=z.$
\end{corollary}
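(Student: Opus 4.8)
The plan is to reduce this single-valued statement to the multivalued $CAT(0)$ corollary proved immediately above, exactly in the spirit of the author's existence corollaries, where a single-valued mapping is promoted to a multivalued one with singleton values. First I would define $F : K \to C(X)$ by $F(x) = \{T(x)\}$. Since a singleton is closed (and convex, compact, etc.), each $F(x)$ indeed lies in the admissible class $C(X)$, so $F$ is a legitimate multivalued mapping on $K$.

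Next I would check that $F$ inherits the $(a_{1},a_{2},b_{1},b_{2})$-multivalued hybrid type I property from $T$, with the same coefficient functions. Because the only element of $Fx$ is $u = Tx$ and the only element of $Fy$ is $v = Ty$, the defining inequality for $F$, namely $a_{1}(x)d^{2}(u,v)+a_{2}(x)d^{2}(u,y)\leq b_{1}(x)d^{2}(x,v)+b_{2}(x)d^{2}(x,y)$ with $u\in Fx$ and $v\in Fy$, collapses to the single-valued hybrid inequality $a_{1}(x)d^{2}(Tx,Ty)+a_{2}(x)d^{2}(Tx,y)\leq b_{1}(x)d^{2}(x,Ty)+b_{2}(x)d^{2}(x,y)$, which holds by hypothesis. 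The admissibility constraints $a_{1}(x)+a_{2}(x)\geq 1$ and $b_{1}(x)+b_{2}(x)\leq 1$ are carried over verbatim, so $F$ is a $(a_{1},a_{2},b_{1},b_{2})$-multivalued hybrid mapping type I.

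Finally I would transfer the two conditions on $\{x_{n}\}$. The $\Delta$-convergence $\Delta\text{-}\lim_{n}x_{n}=z$ is a statement about the sequence alone and needs no change, and boundedness is likewise preserved. For the residual condition, since $d(x_{n},Fx_{n})=d(x_{n},\{Tx_{n}\})=d(x_{n},Tx_{n})$, the hypothesis $\lim_{n\to\infty}d(x_{n},Tx_{n})=0$ gives $\lim_{n\to\infty}d(x_{n},Fx_{n})=0$. Applying the preceding $CAT(0)$ corollary for multivalued hybrid type I mappings to $F$ and $\{x_{n}\}$ yields $z\in K$ and $z\in F(z)=\{T(z)\}$, which is precisely $Tz=z$.

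I do not expect a genuine obstacle: all of the analytic content already resides in the demiclosed principle (Theorem \ref{(teo4.1)}) and its $CAT(0)$ specialization, and the reduction is purely formal. The only points deserving an explicit line are that singletons belong to $C(X)$ and that the single-valued and multivalued defining inequalities coincide when the image is a singleton, both of which are immediate.
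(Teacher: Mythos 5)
Your proposal is correct and takes essentially the same route as the paper: the paper states this corollary without a separate proof, and its standard device for deriving single-valued corollaries (see the corollary to Theorem \ref{(teo3.1)}, where the mapping $F=\{T(x)\}$ is introduced) is precisely your singleton reduction, after which the immediately preceding multivalued $CAT(0)$ corollary (itself an unproved specialization of Theorem \ref{(teo4.1)}) applies verbatim. Nothing further is needed.
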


\begin{theorem}
\label{(teo4.2)}(Demiclosed principle for generalized multivalued hybrid
mapping type I)Let $\kappa >0$ and $X$ be a complete $CAT(\kappa )$ space
with $diam(X)\leq \frac{\pi -\varepsilon }{2\sqrt{\kappa }}$ for some $%
\varepsilon \in (0,\pi /2)$. Let $K$ be a nonempty closed convex subset of $%
X $, and $T:K\rightarrow C(X)$ be a generalized multivalued hybrid mapping
type I with $\frac{2k_{1}(x)}{1-a_{2}(x)}<\frac{R}{2}$ for all $x\in K$
where $R=(\pi -2\varepsilon )tan(\varepsilon )$. Let$\{x_{n}\}$ be a
sequence in $K$ with $\Delta -\lim_{n\rightarrow \infty }x_{n}=z$ and $%
\lim_{n\rightarrow \infty }d(x_{n},Tx_{n})=0$. Then $z\in K$ and $z\in T(z).$
\end{theorem}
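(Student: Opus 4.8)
The plan is to follow the template already used in Theorems \ref{(teo3.2)} and \ref{(teo4.1)}: reduce the conclusion $z\in T(z)$ to a comparison of asymptotic radii and then close with uniqueness of the asymptotic center. First I would record the two structural facts. Since $K$ is closed and convex and $\{x_{n}\}$ is bounded, Lemma \ref{(lemma2.10)}(ii) gives $z\in K$. Since $\operatorname{diam}(X)\le\frac{\pi-\varepsilon}{2\sqrt{\kappa}}$, each $Tx_{n}\in C(X)$ is a closed convex set, hence proximinal with a singleton metric projection by Lemma \ref{(lemma2.12)}; so I may choose $y_{n}\in Tx_{n}$ with $d(x_{n},y_{n})=d(x_{n},Tx_{n})$, and the hypothesis $\lim_{n}d(x_{n},Tx_{n})=0$ forces $d(x_{n},y_{n})\to 0$.

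Next I would apply the generalized type I inequality at $x=z$, $y=x_{n}$ to produce a point $u\in Tz$ together with its companion point in $Tx_{n}$, feeding in the near point $y_{n}$. Writing $r:=\limsup_{n}d(z,x_{n})$ and $s:=\limsup_{n}d(u,x_{n})$ and using $d(x_{n},y_{n})\to 0$ (so that $\limsup_{n}d(z,y_{n})=r$, $\limsup_{n}d(u,y_{n})=s$, and, crucially, $d(y_{n},x_{n})\to 0$, which makes the $k_{2}$-term vanish — this is exactly why no hypothesis on $k_{2}$ is needed), taking $\limsup_{n}$ should yield an inequality of the shape
\[
(1-a_{2}(z))\,s^{2}\le\bigl(a_{1}(z)+a_{3}(z)\bigr)r^{2}+k_{1}(z)\,d^{2}(z,u).
\]
Every term collapses cleanly except the residual $k_{1}(z)d^{2}(z,u)$, and disposing of it is the heart of the matter.

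The hard part, and the reason the hypothesis $\frac{2k_{1}(x)}{1-a_{2}(x)}<\frac{R}{2}$ is imposed, is precisely this leftover $k_{1}$-term. Here I would invoke $R$-convexity: by Lemma \ref{(lemma2.2)} the space is $R$-convex with $R=(\pi-2\varepsilon)\tan(\varepsilon)$, so applying the CN$^{\ast}$-type inequality to the midpoint $m=\tfrac12 z\oplus\tfrac12 u$ against $x_{n}$ and passing to the limit bounds $d^{2}(z,u)$ in terms of $r$, $s$, and an asymptotic-center quantity. Feeding this bound back, the coefficient $1-a_{2}(z)-\frac{4k_{1}(z)}{R}$ stays positive exactly because of the standing hypothesis, and together with $a_{1}(z)+a_{2}(z)+a_{3}(z)<1$ this should force $s\le r$, i.e. $\limsup_{n}d(u,x_{n})\le\limsup_{n}d(z,x_{n})$. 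Since $\operatorname{diam}(X)\le\frac{\pi-\varepsilon}{2\sqrt{\kappa}}<\frac{\pi}{2\sqrt{\kappa}}$, Proposition \ref{(prop2.9)} guarantees that $A(\{x_{n}\})=\{z\}$ is a single point, so $s\le r$ identifies $u$ with $z$ and gives $z=u\in Tz$. I expect the two genuinely delicate points to be (i) that the generalized type I inequality supplies the companion point in $Tx_{n}$ only \emph{existentially}, so substituting the near point $y_{n}$ must be justified (or the $k_{2}$-contribution absorbed by another device), and (ii) the sign bookkeeping in the $R$-convexity step, where one must track every term carefully so that the final radius comparison genuinely runs in the direction $s\le r$ rather than $s\ge r$.
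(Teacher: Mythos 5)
Your proposal is correct and follows essentially the same route as the paper's own proof: nearest points $y_{n}\in Tx_{n}$ with $d(x_{n},y_{n})\to 0$, the type I inequality applied to the pair $(z,x_{n})$ so that the $k_{2}$-contribution vanishes and only $k_{1}(z)d^{2}(z,u)$ survives, then the $R$-convexity (CN$^{\ast}$-type) inequality at the midpoint $\tfrac{1}{2}z\oplus\tfrac{1}{2}u$ combined with minimality/uniqueness of the asymptotic center $z$ to force $\bigl(\tfrac{R}{8}-\tfrac{k_{1}(z)}{2(1-a_{2}(z))}\bigr)d^{2}(z,u)\leq 0$, hence $z=u\in Tz$. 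The two delicate points you flag (the merely existential companion point in $Tx_{n}$, and the sign bookkeeping in the limit) are real, but they are glossed over in exactly the same way in the paper's argument, so your plan matches it in substance.
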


\begin{proof}
By Lemma \ref{(lemma2.10)}, $z\in K.$We can find a sequence $\{y_{n}\}$ such
that $d(x_{n},y_{n})=d(x_{n},Tx_{n}),$ so we have $\lim_{n\rightarrow \infty
}d(x_{n},y_{n})=0.$Since $T$ is generalized multivalued hybrid mapping type
I, for all $u\in Tz$ such that,%
\begin{eqnarray*}
d^{2}(y_{n},u) &\leq
&a_{1}(z)d^{2}(x_{n},z)+a_{2}(z)d^{2}(u,x_{n})+a_{3}(z)d^{2}(y_{n},z) \\
&&+k_{1}(z)d^{2}(y_{n},x_{n})+k_{2}(x)d^{2}(u,z) \\
&\leq &a_{1}(z)d^{2}(x_{n},z)+a_{2}(z)[d(x_{n},y_{n})+d(y_{n},u)]^{2} \\
&&+a_{3}(z)[d(y_{n},x_{n})+d(x_{n},z)]^{2} \\
&&+k_{1}(z)d^{2}(y_{n},x_{n})+k_{2}(x)d^{2}(u,z)
\end{eqnarray*}%
which implies that 
\begin{equation*}
\limsup_{n\rightarrow \infty }d^{2}(y_{n},u)\leq \limsup_{n\rightarrow
\infty }d^{2}(x_{n},z)+\frac{k_{2}(x)}{1-a_{2}(x)}d^{2}(z,u)
\end{equation*}%
and using this we have 
\begin{eqnarray*}
\limsup_{n\rightarrow \infty }d^{2}(x_{n},u) &\leq &\limsup_{n\rightarrow
\infty }[d^{2}(x_{n},y_{n})+d^{2}(y_{n},u)] \\
&\leq &\limsup_{n\rightarrow \infty }d^{2}(x_{n},z)+\frac{k_{1}(x)}{%
1-a_{2}(x)}d^{2}(z,u).
\end{eqnarray*}%
By CN$^{x}$ inequality we have%
\begin{equation*}
d^{2}(x_{n},\frac{1}{2}z\oplus \frac{1}{2}u)\leq \frac{1}{2}d^{2}(x_{n},z)+%
\frac{1}{2}d^{2}(x_{n},u)-\frac{R}{8}d^{2}(z,u)
\end{equation*}%
and combining all of these we get%
\begin{eqnarray*}
\limsup_{n\rightarrow \infty }d^{2}(x_{n},\frac{1}{2}z\oplus \frac{1}{2}u)
&\leq &\frac{1}{2}\limsup_{n\rightarrow \infty }d^{2}(x_{n},z)+\frac{1}{2}%
\limsup_{n\rightarrow \infty }d^{2}(x_{n},u) \\
&&-R\frac{1}{8}d^{2}(z,u) \\
&\leq &\frac{1}{2}\limsup_{n\rightarrow \infty }d^{2}(x_{n},z)+\frac{1}{2}%
\limsup_{n\rightarrow \infty }d^{2}(x_{n},z) \\
&&+\frac{k_{1}(x)}{2(1-a_{2}(x))}d^{2}(z,u)-\frac{R}{8}d^{2}(z,u). \\
&=&\frac{1}{2}\limsup_{n\rightarrow \infty }d^{2}(x_{n},z)+\frac{1}{2}%
\limsup_{n\rightarrow \infty }d^{2}(x_{n},z) \\
&&+(\frac{k_{1}(x)}{2(1-a_{2}(x))}-\frac{R}{8})d^{2}(z,u) \\
&=&\frac{1}{2}\limsup_{n\rightarrow \infty }d^{2}(x_{n},z)+\frac{1}{2}%
\limsup_{n\rightarrow \infty }d^{2}(x_{n},z) \\
&&+(\frac{k_{1}(x)}{2(1-a_{2}(x))}-\frac{R}{8})d^{2}(z,u)
\end{eqnarray*}%
which implies that 
\begin{equation*}
(\frac{R}{8}-\frac{k_{1}(x)}{2(1-a_{2}(x))})d^{2}(z,u)\leq
\limsup_{n\rightarrow \infty }d^{2}(x_{n},z)-\limsup_{n\rightarrow \infty
}d^{2}(x_{n},\frac{1}{2}z\oplus \frac{1}{2}u)\leq 0
\end{equation*}%
and by assumptions we have $z=u\in Tz$
\end{proof}

\begin{corollary}
Let $\kappa >0$ and $X$ be a complete $CAT(\kappa )$ space with $diam(X)\leq 
\frac{\pi -\varepsilon }{2\sqrt{\kappa }}$ for some $\varepsilon \in (0,\pi
/2)$. Let $K$ be a nonempty closed convex subset of $X$, and $T:K\rightarrow
X$ be a generalized hybrid mapping with $\frac{2k_{1}(x)}{1-a_{2}(x)}<R$ for
all $x\in K$ where $R=(\pi -2\varepsilon )tan(\varepsilon )$. Let$\{x_{n}\}$
be a sequence in $K$ with $\Delta -\lim_{n\rightarrow \infty }x_{n}=z$ and $%
\lim_{n\rightarrow \infty }d(x_{n},Tx_{n})=0$. Then $z\in K$ and $T(z)=z.$
\end{corollary}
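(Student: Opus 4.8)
The plan is to deduce this single-valued statement directly from Theorem \ref{(teo4.2)} by regarding $T$ as a multivalued mapping, exactly the device already used in the corollaries following Theorems \ref{(teo3.1)} and \ref{(teo3.2)}. First I would set $F\colon K\to C(X)$, $F(x)=\{Tx\}$; since a singleton is nonempty and closed, $F$ does map into $C(X)$.

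Next I would verify that $F$ is a generalized multivalued hybrid mapping type I carrying the same coefficient functions $a_1,a_2,a_3,k_1,k_2$ as $T$. For fixed $x,y\in K$, the set $Fx$ consists only of $u=Tx$ and $Fy$ only of $v=Ty$, so the defining inequality of type I,
\[
d^{2}(u,v)\le a_{1}(x)d^{2}(x,y)+a_{2}(x)d^{2}(u,y)+a_{3}(x)d^{2}(x,v)+k_{1}(x)d^{2}(u,x)+k_{2}(x)d^{2}(v,y),
\]
collapses, upon substituting $u=Tx$ and $v=Ty$, into the generalized hybrid inequality assumed for $T$. Thus $F$ inherits all of $T$'s coefficient constraints together with the smallness condition on $\frac{2k_{1}(x)}{1-a_{2}(x)}$, so every hypothesis of Theorem \ref{(teo4.2)} is met.

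It then remains only to transport the two hypotheses on $\{x_n\}$. The $\Delta$-convergence $\Delta -\lim_{n\to\infty}x_n=z$ is a property of the sequence alone and is therefore unchanged, while $d(x_n,Fx_n)=d(x_n,\{Tx_n\})=d(x_n,Tx_n)$ shows that $\lim_{n\to\infty}d(x_n,Tx_n)=0$ is the same as $\lim_{n\to\infty}d(x_n,Fx_n)=0$. Applying Theorem \ref{(teo4.2)} to $F$ now gives $z\in K$ and $z\in F(z)=\{Tz\}$, which is precisely $Tz=z$.

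There is no real analytic obstacle here, as the whole argument is a one-line reduction; the only point to get right is bookkeeping of the constant. Theorem \ref{(teo4.2)} is stated with $\frac{2k_{1}(x)}{1-a_{2}(x)}<\frac{R}{2}$, whereas the corollary is written with $<R$; since the reduction invokes the theorem without altering any coefficient, the hypothesis that actually makes the argument run is $\frac{2k_{1}(x)}{1-a_{2}(x)}<\frac{R}{2}$, and I would read the $<R$ in the statement as a typographical slip for $R/2$.
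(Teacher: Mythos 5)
Your proposal is correct and is exactly the paper's (implicit) argument: the paper states this corollary without proof, relying on the same singleton reduction $F(x)=\{Tx\}$ that it uses for the corollaries of Theorems \ref{(teo3.1)} and \ref{(teo3.2)}, and your verification that the type I inequality collapses to the single-valued one is all that is needed. Your reading of the hypothesis $\frac{2k_{1}(x)}{1-a_{2}(x)}<R$ as a typographical slip for $\frac{R}{2}$ is also right, since the proof of Theorem \ref{(teo4.2)} genuinely requires $\frac{R}{8}-\frac{k_{1}(x)}{2(1-a_{2}(x))}>0$, i.e.\ the $\frac{R}{2}$ bound.
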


\begin{corollary}
Let $X$ be a complete $CAT(0)$ space and $K$ be a nonempty closed convex
subset of $X$, and $T:K\rightarrow C(X)$ be a generalized multivalued hybrid
mapping type I . Let$\{x_{n}\}$ be a sequence in $K$ with $\Delta
-\lim_{n\rightarrow \infty }x_{n}=z$ and $\lim_{n\rightarrow \infty
}d(x_{n},Tx_{n})=0$. Then $z\in K$ and $z\in T(z).$
\end{corollary}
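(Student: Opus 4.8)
The strategy is to re-run the argument of Theorem \ref{(teo4.2)} with the convexity constant $R=2$, which is legitimate because every complete $CAT(0)$ space is $2$-convex: it satisfies the \textit{CN}$^{\ast }$ inequality for all of its points and all $\lambda \in [0,1]$, with no restriction on the diameter. The key observation is that for $R=2$ the extra hypothesis $\frac{2k_{1}(x)}{1-a_{2}(x)}<\frac{R}{2}$ of Theorem \ref{(teo4.2)} reads $2k_{1}(x)<1-a_{2}(x)$, which is already part of the definition of a generalized multivalued hybrid mapping type I. Thus no additional assumption is needed, and the diameter bound that Theorem \ref{(teo4.2)} used only to invoke $R$-convexity through Lemma \ref{(lemma2.2)} can be dispensed with in the $CAT(0)$ setting.

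First I would record that $\{x_{n}\}$ is bounded (being $\Delta $-convergent) with asymptotic center the singleton $\{z\}$; since $K$ is closed and convex, Lemma \ref{(lemma2.10)} gives $z\in K$. Next, choose $y_{n}\in Tx_{n}$ realizing $d(x_{n},y_{n})=d(x_{n},Tx_{n})$, so that $\lim_{n\rightarrow \infty }d(x_{n},y_{n})=0$. Fix an arbitrary $u\in Tz$ and apply the defining type I inequality with $x=z$, $y=x_{n}$ and image point $v=y_{n}\in Tx_{n}$. Bounding the cross terms $d(u,x_{n})\leq d(u,y_{n})+d(y_{n},x_{n})$ and $d(y_{n},z)\leq d(y_{n},x_{n})+d(x_{n},z)$ by the triangle inequality and using $d(x_{n},y_{n})\rightarrow 0$, passage to the limit superior yields
\begin{equation*}
\limsup_{n\rightarrow \infty }d^{2}(x_{n},u)\leq \limsup_{n\rightarrow \infty }d^{2}(x_{n},z)+\frac{k_{1}(z)}{1-a_{2}(z)}d^{2}(z,u).
\end{equation*}

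I would then apply the \textit{CN}$^{\ast }$ inequality with $R=2$ at the midpoint $m=\frac{1}{2}z\oplus \frac{1}{2}u$, i.e.
\begin{equation*}
d^{2}(x_{n},m)\leq \tfrac{1}{2}d^{2}(x_{n},z)+\tfrac{1}{2}d^{2}(x_{n},u)-\tfrac{1}{4}d^{2}(z,u),
\end{equation*}
and pass to the limit superior. Since $z$ is the asymptotic center of $\{x_{n}\}$, it minimizes $w\mapsto \limsup_{n}d(x_{n},w)$, so $\limsup_{n}d^{2}(x_{n},m)\geq \limsup_{n}d^{2}(x_{n},z)$; substituting the bound for $\limsup_{n}d^{2}(x_{n},u)$ obtained above then gives
\begin{equation*}
\left( \frac{1}{4}-\frac{k_{1}(z)}{2(1-a_{2}(z))}\right) d^{2}(z,u)\leq 0.
\end{equation*}
Because the definition forces $2k_{1}(z)<1-a_{2}(z)$, the coefficient is strictly positive, so $d(z,u)=0$, i.e. $u=z$. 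As $u\in Tz$ was arbitrary this gives $z\in Tz$.

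The one point that genuinely needs care --- and what I expect to be the main obstacle --- is justifying that the diameter hypothesis of Theorem \ref{(teo4.2)} may be omitted. This relies on two facts specific to $CAT(0)$ that substitute for Proposition \ref{(prop2.9)} and Lemma \ref{(lemma2.2)}: a bounded sequence in a complete $CAT(0)$ space has a unique asymptotic center, which lies in any closed convex set containing the sequence, and the \textit{CN}$^{\ast }$ inequality holds globally with $R=2$. Both are classical, and once they are in place the computation is word-for-word that of Theorem \ref{(teo4.2)}. Alternatively, since every $CAT(0)$ space is $CAT(\kappa )$ for each $\kappa >0$, one could instead pass to a bounded closed convex subset containing $\{x_{n}\}$, $z$ and the chosen $u\in Tz$, pick $\kappa >0$ small enough that the diameter bound is satisfied there, and quote Theorem \ref{(teo4.2)} directly; the direct re-run above is however cleaner and avoids any boundedness assumption on $X$.
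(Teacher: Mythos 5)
Your proposal is correct, and it is worth noting that it is actually more careful than what the paper does: the paper gives no proof of this corollary at all, relying on the blanket remark made in Section 3 that a $CAT(0)$ space is a $CAT(\kappa)$ space for every $\kappa>0$, so that the corollary should follow from Theorem \ref{(teo4.2)}. Strictly speaking that citation is incomplete, because Theorem \ref{(teo4.2)} also demands $diam(X)\leq \frac{\pi-\varepsilon}{2\sqrt{\kappa}}$, a condition an unbounded $CAT(0)$ space can never satisfy no matter how small $\kappa$ is chosen; one must either localize to a bounded closed convex set containing $\{x_n\}$, $z$ and $Tz$ (the patch you sketch at the end), or re-run the argument with the global $2$-convexity of $CAT(0)$, which is what you do. Your key observation --- that with $R=2$ the side condition $\frac{2k_{1}(x)}{1-a_{2}(x)}<\frac{R}{2}$ collapses to $2k_{1}(x)<1-a_{2}(x)$, which is already built into the definition of a generalized multivalued hybrid mapping of type I --- is exactly the reason the corollary can be stated with no extra hypothesis, and your limsup computation, the CN$^{\ast}$ inequality at the midpoint, and the minimizing property of the asymptotic center reproduce the mechanism of the paper's proof of Theorem \ref{(teo4.2)} faithfully. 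The only blemish you inherit from the paper itself: the definition of type I mappings quantifies $u\in Tx$, $v\in Ty$ existentially ("there are"), while both the paper's proof and yours apply the inequality to an arbitrary $u\in Tz$ with the specific choice $v=y_n$; this mismatch is in the source, not something your argument introduces.
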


\begin{corollary}
Let $X$ be a complete $CAT(0)$ space and $K$ be a nonempty closed convex
subset of $X$, and $T:K\rightarrow X$ be a generalized hybrid mapping. Let $%
\{x_{n}\}$ be a sequence in $K$ with $\Delta -\lim_{n\rightarrow \infty
}x_{n}=z$ and $\lim_{n\rightarrow \infty }d(x_{n},Tx_{n})=0$. Then $z\in K$
and $Tz=z.$
\end{corollary}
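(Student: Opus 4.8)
The plan is to deduce this single-valued statement directly from the preceding multivalued $CAT(0)$ corollary by regarding a single-valued map as a singleton-valued multivalued map. First I would set $F\colon K\to C(X)$, $Fx=\{Tx\}$; each value is a singleton, hence a nonempty closed convex subset of $X$, so $F$ indeed maps into $C(X)$.

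The key verification is that $F$ is a generalized multivalued hybrid mapping type I with the same coefficient functions $a_1,a_2,a_3,k_1,k_2$ as $T$. Since $Fx=\{Tx\}$ and $Fy=\{Ty\}$ are singletons, the only admissible selections in the type I definition are $u=Tx$ and $v=Ty$; with these the type I inequality collapses to the defining inequality of the single-valued generalized hybrid map $T$ (using $d^2(u,v)=d^2(Tx,Ty)$, $d^2(u,y)=d^2(Tx,y)$, and so on). The coefficient constraints $a_1(x)+a_2(x)+a_3(x)<1$, $2k_1(x)<1-a_2(x)$ and $2k_2(x)<1-a_3(x)$ transfer verbatim, so $F$ meets the definition.

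Next I would transport the two hypotheses on $\{x_n\}$. Because $Fx_n=\{Tx_n\}$ is a singleton, $d(x_n,Fx_n)=d(x_n,Tx_n)\to 0$, while the condition $\Delta\text{-}\lim_n x_n=z$ involves only the sequence $\{x_n\}$ and is unaffected by passing to $F$. Hence all hypotheses of the preceding $CAT(0)$ multivalued demiclosed corollary hold for $F$, and applying that corollary yields $z\in K$ and $z\in Fz=\{Tz\}$, which is precisely $Tz=z$.

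The only step needing care---and it is routine---is confirming that the singleton lift $F$ genuinely satisfies the multivalued type I definition, including the existence-of-selections clause and the three coefficient inequalities; once that is in place the conclusion is immediate. As an alternative route one could invoke Theorem \ref{(teo4.2)} together with the fact that a $CAT(0)$ space is a $CAT(\kappa)$ space for every $\kappa>0$ and satisfies the $CN^{\ast}$ inequality (i.e.\ it is $2$-convex), so that the hypothesis $\tfrac{2k_1(x)}{1-a_2(x)}<\tfrac{R}{2}$ holds automatically with $R=2$ by the definitional bound $2k_1(x)<1-a_2(x)$; but the singleton reduction is cleaner and matches the pattern of the earlier corollaries.
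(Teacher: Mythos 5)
Your proposal is correct and follows essentially the same route the paper intends for this corollary: the paper's own pattern (used explicitly in its Section 3 corollary proofs) is precisely the singleton lift $F(x)=\{Tx\}$ fed into the corresponding multivalued demiclosedness result, combined with the remark that a $CAT(0)$ space is $CAT(\kappa)$ for every $\kappa>0$ and is $2$-convex, so that with $R=2$ the side condition $\frac{2k_{1}(x)}{1-a_{2}(x)}<\frac{R}{2}$ reduces to the definitional bound $2k_{1}(x)<1-a_{2}(x)$. Both your primary reduction and your alternative via Theorem \ref{(teo4.2)} match this intended argument, so nothing further is needed.
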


\begin{theorem}
\label{(teo4.3)}(Demiclosed principle for generalized multivalued hybrid
mapping type II)Let $\kappa >0$ and $X$ be a complete $CAT(\kappa )$ space
with $diam(X)\leq \frac{\pi -\varepsilon }{2\sqrt{\kappa }}$ for some $%
\varepsilon \in (0,\pi /2)$. Let $K$ be a nonempty closed convex subset of $%
X $, and $T:K\rightarrow KC(X)$ be a generalized multivalued hybrid mapping
with $\frac{2k_{1}(x)}{1-a_{2}(x)}<\frac{R}{2}$ for all $x\in K$ where $%
R=(\pi -2\varepsilon )tan(\varepsilon )$. Let$\{x_{n}\}$ be a sequence in $K$
with $\Delta -\lim_{n\rightarrow \infty }x_{n}=z$ and $\lim_{n\rightarrow
\infty }d(x_{n},Tx_{n})=0$. Then $z\in K$ and $z\in T(z).$
\end{theorem}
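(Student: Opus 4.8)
The plan is to mirror the demiclosed principle for type~I mappings (Theorem~\ref{(teo4.2)}), with two structural changes forced by the type~II definition: the Hausdorff distance must be converted into a distance between genuine points, and a single comparison point $u\in Tz$ must be extracted. The compactness of the values of $T$ (here $T:K\to KC(X)$) is precisely what makes this possible. First I would record the routine facts. Since $\Delta\text{-}\lim_n x_n=z$, the point $z$ is the common asymptotic center of $\{x_n\}$ and of all its subsequences; as $K$ is closed and convex, Lemma~\ref{(lemma2.10)}(ii) gives $z\in K$. Using $\lim_n d(x_n,Tx_n)=0$ and the compactness of $Tx_n$, I would pick $y_n\in Tx_n$ with $d(x_n,y_n)=d(x_n,Tx_n)$, so that $d(x_n,y_n)\to 0$ and in particular $d(Tx_n,x_n)\to 0$.

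The core step is to produce a point $u\in Tz$ for which the same master estimate as in Theorem~\ref{(teo4.2)} holds, namely $\limsup_n d^2(x_n,u)\le \limsup_n d^2(x_n,z)+\frac{k_1(z)}{1-a_2(z)}d^2(z,u)$. For each $n$, compactness of $Tz$ lets me choose $u_n\in Tz$ realizing $d(y_n,u_n)=d(y_n,Tz)\le H(Tx_n,Tz)$, and then extract a subsequence with $u_{n_j}\to u\in Tz$. Applying the type~II inequality with first argument $z$ and second argument $x_n$, and bounding the set-distances on the right by $d(Tz,x_n)\le d(u_n,x_n)$, $d(z,Tx_n)\le d(z,y_n)$ and $d(Tx_n,x_n)=d(x_n,y_n)\to 0$, while using $d^2(y_n,u_n)\le H^2(Tx_n,Tz)$ on the left, I arrive, after passing to the subsequence and noting that $d(x_n,y_n)\to 0$ and $u_{n_j}\to u$ make the quantities $\limsup_j d^2(x_{n_j},u)$, $\limsup_j d^2(y_{n_j},u_{n_j})$ and $\limsup_j d^2(u_{n_j},x_{n_j})$ all coincide, at an inequality of the form
\[
(1-a_2(z))\limsup_{j} d^2(x_{n_j},u)\le (a_1(z)+a_3(z))\limsup_{j} d^2(x_{n_j},z)+k_1(z)\,d^2(z,Tz).
\]
Since $a_1(z)+a_3(z)\le 1-a_2(z)$ and $d(z,Tz)\le d(z,u)$, dividing by $1-a_2(z)$ yields exactly the master estimate along $\{x_{n_j}\}$.

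Finally I would close as in Theorem~\ref{(teo4.2)}. By Lemma~\ref{(lemma2.2)} the space $X$ is $R$-convex for $R=(\pi-2\varepsilon)\tan\varepsilon$, so the $\mathrm{CN}^{\ast}$-type inequality gives $d^2(x_{n_j},\tfrac12 z\oplus\tfrac12 u)\le \tfrac12 d^2(x_{n_j},z)+\tfrac12 d^2(x_{n_j},u)-\tfrac R8 d^2(z,u)$. Taking $\limsup_j$, substituting the master estimate, and using that $z$ is the asymptotic center of $\{x_{n_j}\}$ (hence minimizes $p\mapsto\limsup_j d^2(x_{n_j},p)$) so that all the $\limsup$ terms cancel, I am left with $\big(\tfrac R8-\tfrac{k_1(z)}{2(1-a_2(z))}\big)d^2(z,u)\le 0$. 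The hypothesis $\tfrac{2k_1(z)}{1-a_2(z)}<\tfrac R2$ makes the bracket strictly positive, forcing $d(z,u)=0$, i.e.\ $z=u\in Tz$.

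I expect the main obstacle to lie entirely in the core step, not in the endgame. Converting the single Hausdorff inequality into the pointwise master estimate requires simultaneously (i) selecting the nearest points $u_n\in Tz$ and extracting a convergent subsequence via compactness of $Tz$, (ii) verifying that the relevant $\limsup$'s along that subsequence coincide because $d(x_n,y_n)\to 0$ and $u_{n_j}\to u$, and (iii) absorbing the self-referential term $k_1(z)d^2(z,Tz)$ through $d(z,Tz)\le d(z,u)$, which is exactly what pins the coefficient threshold to $\tfrac{2k_1}{1-a_2}<\tfrac R2$. Once the master estimate is in hand, the $\mathrm{CN}^{\ast}$/asymptotic-center argument is routine and identical to the type~I case.
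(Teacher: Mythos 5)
Your proposal is correct and follows essentially the same route as the paper's own proof: choose nearest points $y_n\in Tx_n$, use compactness of $Tz$ to select nearest points in $Tz$ and extract a convergent subsequence tending to $u\in Tz$, convert the Hausdorff-distance hybrid inequality into the master estimate $\limsup d^2(x_n,u)\le\limsup d^2(x_n,z)+\frac{k_1(z)}{1-a_2(z)}d^2(z,u)$, and finish with the $R$-convexity (CN$^{\ast}$-type) midpoint inequality together with the minimality of the asymptotic center $z$. Your version is in fact slightly more careful than the paper's (consistent bookkeeping along the subsequence $\{x_{n_j}\}$ and correct placement of the $k_1(z)d^2(Tz,z)$ term when the inequality is applied with first argument $z$), but these are refinements of the same argument, not a different one.
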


\begin{proof}
By Lemma 2, $z\in K.$We can find a sequence $\{y_{n}\}$ such that $y_{n}\in
Tx_{n},$ $d(x_{n},y_{n})=d(x_{n},Tx_{n}),$ so we have $\lim_{n\rightarrow
\infty }d(x_{n},y_{n})=0$ and since $Tz$ is compact we can find a sequence $%
\{z_{n}\}$ in $Tz$ such that $d(y_{n},z_{n})=d(y_{n},Tz).$ Then there is a
convergent subsequence $\{z_{n_{i}}\}$ of $\{z_{n}\},$ say $%
\lim_{i\rightarrow \infty }z_{n}=u\in Tz$.%
\begin{eqnarray*}
d(x_{n_{i}},u) &\leq
&d(x_{n_{i}},y_{n_{i}})+d(y_{n_{i}},z_{n_{i}})+d(z_{n_{i}},u) \\
&\leq &d(x_{n_{i}},y_{n_{i}})+d(y_{n_{i}},Tz)+d(z_{n_{i}},u) \\
&\leq &d(x_{n_{i}},y_{n_{i}})+H(Tx_{n_{i}},Tz)+d(z_{n_{i}},u) \\
&\leq &d(x_{n_{i}},y_{n_{i}})+H(Tx_{n_{i}},Tz)+d(z_{n_{i}},u)
\end{eqnarray*}%
implies that \ $\limsup_{n\rightarrow \infty }d(x_{n_{i}},u)\leq
\limsup_{n\rightarrow \infty }H(Tx_{n},Tz)$.Because of $T$ is generalized
multivalued hybrid mapping,%
\begin{eqnarray*}
H^{2}(Tx_{n},Tz) &\leq
&a_{1}(z)d^{2}(x_{n},z)+a_{2}(z)d^{2}(Tz,x_{n})+a_{3}(z)d^{2}(Tx_{n},z) \\
&&+k_{1}(z)d^{2}(Tx_{n},x_{n})+k_{2}(x)d^{2}(Tz,z) \\
&\leq &a_{1}(z)d^{2}(x_{n},z)+a_{2}(z)[d(x_{n},Tx_{n})+H(Tx_{n},Tz)]^{2} \\
&&+a_{3}(z)[d(Tx_{n},x_{n})+d(x_{n},z)]^{2}+k_{1}(z)d^{2}(Tx_{n},x_{n})+k_{2}(x)d^{2}(Tz,z)
\end{eqnarray*}%
which implies that 
\begin{eqnarray*}
\limsup_{n\rightarrow \infty }H^{2}(Tx_{n},Tz) &\leq &\limsup_{n\rightarrow
\infty }d^{2}(x_{n},z)+\frac{k_{1}(x)}{1-a_{2}(x)}d^{2}(z,Tz) \\
&\leq &\limsup_{n\rightarrow \infty }d^{2}(x_{n},z)+\frac{k_{1}(x)}{%
1-a_{2}(x)}d^{2}(z,u).
\end{eqnarray*}%
By CN$^{x}$ inequality we have%
\begin{equation*}
d^{2}(x_{n},\frac{1}{2}z\oplus \frac{1}{2}u)\leq \frac{1}{2}d^{2}(x_{n},z)+%
\frac{1}{2}d^{2}(x_{n},u)-\frac{R}{8}d^{2}(z,u)
\end{equation*}%
and combining all of these we get%
\begin{eqnarray*}
\limsup_{n\rightarrow \infty }d^{2}(x_{n},\frac{1}{2}z\oplus \frac{1}{2}u)
&\leq &\frac{1}{2}\limsup_{n\rightarrow \infty }d^{2}(x_{n},z)+\frac{1}{2}%
\limsup_{n\rightarrow \infty }d^{2}(x_{n},u)-\frac{R}{8}d^{2}(z,u) \\
&\leq &\frac{1}{2}\limsup_{n\rightarrow \infty }d^{2}(x_{n},z)+\frac{1}{2}%
\limsup_{n\rightarrow \infty }H(Tx_{n},Tz)-\frac{R}{8}d^{2}(z,u). \\
&\leq &\frac{1}{2}\limsup_{n\rightarrow \infty }d^{2}(x_{n},z)+\frac{1}{2}%
\limsup_{n\rightarrow \infty }d^{2}(x_{n},z) \\
&&+\frac{k_{1}(x)}{2(1-a_{2}(x))}d^{2}(z,u)-\frac{R}{8}d^{2}(z,u). \\
&=&\limsup_{n\rightarrow \infty }d^{2}(x_{n},z)+(\frac{k_{1}(x)}{%
2(1-a_{2}(x))}-\frac{R}{8})d^{2}(z,u)
\end{eqnarray*}%
which implies that 
\begin{equation*}
(\frac{R}{8}-\frac{k_{1}(x)}{2(1-a_{2}(x))})d^{2}(z,u)\leq
\limsup_{n\rightarrow \infty }d^{2}(x_{n},z)-\limsup_{n\rightarrow \infty
}d^{2}(x_{n},\frac{1}{2}z\oplus \frac{1}{2}u)\leq 0
\end{equation*}%
and by assumptions we have $z=u\in Tz.$
\end{proof}

\begin{corollary}
Let $X$ be a complete $CAT(0)$ space and $K$ be a nonempty closed convex
subset of $X$, and $T:K\rightarrow KC(X)$ be a generalized multivalued
hybrid mapping type II. Let$\{x_{n}\}$ be a sequence in $K$ with $\Delta
-\lim_{n\rightarrow \infty }x_{n}=z$ and $\lim_{n\rightarrow \infty
}d(x_{n},Tx_{n})=0$. Then $z\in K$ and $z\in T(z).$
\end{corollary}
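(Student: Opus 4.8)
The plan is to obtain this corollary as the $CAT(0)$ specialization of Theorem \ref{(teo4.3)}. The only feature of the ambient geometry used in that proof is the $R$-convexity inequality, which enters through the CN$^{\ast}$ estimate with the constant $R$. In a $CAT(0)$ space the CN$^{\ast}$ inequality holds globally (for all $\lambda\in[0,1]$ and all triples of points); by the equivalence recorded in the Preliminaries this is exactly $R$-convexity with $R=2$. Consequently I would simply rerun the argument of Theorem \ref{(teo4.3)} with $R=2$ throughout. No diameter restriction is needed, since in $CAT(0)$ the CN$^{\ast}$ inequality is unrestricted, and the boundedness required for the asymptotic center is supplied by the hypothesis that $\{x_n\}$ is $\Delta$-convergent.

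First I would invoke Lemma \ref{(lemma2.10)}(ii) to place $z$ in $K$, and use that $z$ is the unique asymptotic center of every subsequence of $\{x_n\}$, so that $\limsup_n d^{2}(x_n,z)\le\limsup_n d^{2}(x_n,w)$ for every $w\in X$. Next, as in the proof of Theorem \ref{(teo4.3)}, I would choose $y_n\in Tx_n$ with $d(x_n,y_n)=d(x_n,Tx_n)\to 0$ and, using compactness of $Tz$, a subsequential limit $u\in Tz$ of nearest points $z_n\in Tz$ to $y_n$. The type II generalized hybrid inequality together with the triangle inequality then yields
\begin{equation*}
\limsup_{n\to\infty}H^{2}(Tx_n,Tz)\le\limsup_{n\to\infty}d^{2}(x_n,z)+\frac{k_1(z)}{1-a_2(z)}\,d^{2}(z,u).
\end{equation*}

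Finally I would feed this into the CN$^{\ast}$ inequality at the midpoint $m=\tfrac12 z\oplus\tfrac12 u$ with $R=2$, i.e. $\tfrac{R}{8}=\tfrac14$, to arrive at
\begin{equation*}
\Bigl(\tfrac14-\frac{k_1(z)}{2(1-a_2(z))}\Bigr)d^{2}(z,u)\le\limsup_{n\to\infty}d^{2}(x_n,z)-\limsup_{n\to\infty}d^{2}\bigl(x_n,\tfrac12 z\oplus\tfrac12 u\bigr)\le 0,
\end{equation*}
where the right-hand inequality is the asymptotic-center minimality at $z$. The crucial---and essentially the only nontrivial---point is that the side condition $\tfrac{2k_1}{1-a_2}<\tfrac{R}{2}$ of Theorem \ref{(teo4.3)} becomes, for $R=2$, precisely the defining inequality $2k_1(z)<1-a_2(z)$ of a generalized multivalued hybrid mapping type II. Hence the bracketed coefficient $\tfrac14-\tfrac{k_1(z)}{2(1-a_2(z))}$ is strictly positive automatically, no extra hypothesis is required, and $d^{2}(z,u)=0$ forces $z=u\in Tz$. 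So the main (and quite minor) obstacle is just bookkeeping the constant $R=2$ and verifying that it absorbs the theorem's side condition into the definition itself.
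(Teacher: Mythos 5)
Your proposal is correct, and in substance it is the specialization of Theorem \ref{(teo4.3)} that the paper intends; but note that the paper offers no explicit proof of this corollary, relying silently on the earlier remark that a $CAT(0)$ space is $CAT(\kappa )$ for every $\kappa >0$, and your way of carrying out the specialization --- rerunning the proof with $R=2$ rather than citing the theorem as a black box --- is genuinely different and in fact the only route that yields the corollary as stated. Two things go wrong with the black-box route. First, Theorem \ref{(teo4.3)} requires $diam(X)\leq \frac{\pi -\varepsilon }{2\sqrt{\kappa }}$, and an arbitrary (possibly unbounded) $CAT(0)$ space satisfies no such bound for any choice of $\kappa $; your rerun needs only boundedness of $\{x_{n}\}$, which, as you observe, is forced by $\Delta $-convergence. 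Second, even for bounded $X$ one has $R=(\pi -2\varepsilon )\tan (\varepsilon )<2$ strictly, so the theorem's hypothesis $\frac{2k_{1}(x)}{1-a_{2}(x)}<\frac{R}{2}<1$ for all $x\in K$ is strictly stronger than the defining inequality $2k_{1}(x)<1-a_{2}(x)$, while the corollary imposes no such hypothesis; your observation that at $R=2$ (the constant of the CN$^{\ast }$ inequality, i.e.\ $2$-convexity of $CAT(0)$ spaces as recorded in the Preliminaries) the side condition collapses into the definition itself is precisely what closes this gap. The remainder of your argument --- Lemma \ref{(lemma2.10)}(ii) to get $z\in K$, nearest-point sequences $y_{n}\in Tx_{n}$ and $z_{n}\in Tz$ with a subsequential limit $u\in Tz$ by compactness of $Tz$, the hybrid estimate
\begin{equation*}
\limsup_{n\rightarrow \infty }H^{2}(Tx_{n},Tz)\leq \limsup_{n\rightarrow \infty }d^{2}(x_{n},z)+\frac{k_{1}(z)}{1-a_{2}(z)}d^{2}(z,u),
\end{equation*}
the CN$^{\ast }$ inequality at the midpoint $\frac{1}{2}z\oplus \frac{1}{2}u$, and minimality of the asymptotic center at $z$ --- reproduces the proof of Theorem \ref{(teo4.3)} faithfully, so the resulting coefficient $\frac{1}{4}-\frac{k_{1}(z)}{2(1-a_{2}(z))}$ is strictly positive and $z=u\in Tz$ follows. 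What your approach buys, compared with the paper's implicit one, is validity for unbounded $CAT(0)$ spaces and the elimination of an extraneous hypothesis.
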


\begin{theorem}
\label{(teo4.1) copy(1)}(Demiclosed principle for $(a_{1},a_{2},b_{1},b_{2})-
$multivalued hybrid mapping type II )Let $\kappa >0$ and $X$ be a complete $%
CAT(\kappa )$ space with $diam(X)\leq \frac{\pi -\varepsilon }{2\sqrt{\kappa 
}}$ for some $\varepsilon \in (0,\pi /2)$. Let $K$ be a nonempty convex
compact subset of $X$, and $T:K\rightarrow KC(X)$ be a $%
(a_{1},a_{2},b_{1},b_{2})-$multivalued hybrid mapping type II with $%
a_{1}(x)\geq 1$ for all $x\in K$. Let$\{x_{n}\}$ be a sequence in $K$ with $%
\Delta -\lim_{n\rightarrow \infty }x_{n}=z$ and $\lim_{n\rightarrow \infty
}d(x_{n},Tx_{n})=0$. Then $z\in K$ and $z\in T(z).$
\end{theorem}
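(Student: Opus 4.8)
The plan is to follow the scheme of Theorem~\ref{(teo4.3)}: exploit the compactness of the values of $T$ to produce an honest point $u\in Tz$ against which $z$ can be compared, and then invoke uniqueness of the asymptotic center to force $u=z$. The key structural remark is that, unlike the generalized type~II mapping, the $(a_{1},a_{2},b_{1},b_{2})$ type~II inequality has \emph{no} $d^{2}(Tz,z)$ term on its right-hand side; hence no CN$^{\ast}$/$R$-convexity estimate is needed, and the single hypothesis $a_{1}(x)\geq 1$ (with $b_{1}+b_{2}\leq 1$) will suffice to control the Hausdorff term.

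First I would note $z\in K$ by Lemma~\ref{(lemma2.10)}. Then, as in the proof of Theorem~\ref{(teo4.3)}, for each $n$ pick $y_{n}\in Tx_{n}$ with $d(x_{n},y_{n})=d(x_{n},Tx_{n})$, so $d(x_{n},y_{n})\to 0$; since $Tz\in KC(X)$ is compact, pick $z_{n}\in Tz$ with $d(y_{n},z_{n})=d(y_{n},Tz)$ and extract a subsequence $z_{n_{i}}\to u\in Tz$. The triangle inequality then gives
\[ d(x_{n_{i}},u)\leq d(x_{n_{i}},y_{n_{i}})+d(y_{n_{i}},Tz)+d(z_{n_{i}},u)\leq d(x_{n_{i}},y_{n_{i}})+H(Tx_{n_{i}},Tz)+d(z_{n_{i}},u), \]
and since the first and last terms vanish, $\limsup_{i}d(x_{n_{i}},u)\leq \limsup_{i}H(Tx_{n_{i}},Tz)$.

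Next I would bound the Hausdorff term from the defining inequality with $x=z$, $y=x_{n_{i}}$:
\[ a_{1}(z)H^{2}(Tz,Tx_{n_{i}})+a_{2}(z)d^{2}(Tz,x_{n_{i}})\leq b_{1}(z)d^{2}(z,Tx_{n_{i}})+b_{2}(z)d^{2}(z,x_{n_{i}}). \]
Discarding the nonnegative term $a_{2}(z)d^{2}(Tz,x_{n_{i}})$ and using $a_{1}(z)\geq 1$ yields $H^{2}(Tz,Tx_{n_{i}})\leq b_{1}(z)d^{2}(z,Tx_{n_{i}})+b_{2}(z)d^{2}(z,x_{n_{i}})$. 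Because $d(z,Tx_{n_{i}})\leq d(z,x_{n_{i}})+d(x_{n_{i}},Tx_{n_{i}})$ with $d(x_{n_{i}},Tx_{n_{i}})\to 0$, and $b_{1}(z)+b_{2}(z)\leq 1$, passing to $\limsup_{i}$ gives $\limsup_{i}H^{2}(Tz,Tx_{n_{i}})\leq \limsup_{i}d^{2}(z,x_{n_{i}})$, hence $\limsup_{i}H(Tz,Tx_{n_{i}})\leq \limsup_{i}d(z,x_{n_{i}})$.

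Combining the two estimates, $\limsup_{i}d(x_{n_{i}},u)\leq \limsup_{i}d(x_{n_{i}},z)$, i.e. $r(u,\{x_{n_{i}}\})\leq r(z,\{x_{n_{i}}\})$. Since $\mathrm{diam}(X)<\frac{\pi}{2\sqrt{\kappa}}$, Proposition~\ref{(prop2.9)} makes the asymptotic center a singleton, and $\Delta\text{-}\lim x_{n}=z$ forces $A(\{x_{n_{i}}\})=\{z\}$ for this subsequence; minimality of the asymptotic radius then gives $u=z$, and as $u\in Tz$ we conclude $z\in Tz$. The one delicate point is the sign bookkeeping: the definition allows $a_{2},b_{1},b_{2}\in\mathbb{R}$, so strictly I am using $a_{2}(z)\geq 0$ and $b_{1}(z),b_{2}(z)\geq 0$ to discard $a_{2}(z)d^{2}(Tz,x_{n_{i}})$ and to collapse the right-hand side to $(b_{1}+b_{2})\limsup_{i} d^{2}(z,x_{n_{i}})$. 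If these coefficients may be negative, the term $a_{2}(z)d^{2}(Tz,x_{n_{i}})$ must instead be re-expressed through $d(Tz,x_{n_{i}})\leq H(Tz,Tx_{n_{i}})+d(x_{n_{i}},Tx_{n_{i}})$ and absorbed on the left, and this is exactly where the margin provided by $a_{1}(z)\geq 1$ has to do the work; this is the step I expect to be the main obstacle.
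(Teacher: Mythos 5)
Your proposal follows essentially the same route as the paper's own proof: compactness of $Tz$ to extract $z_{n_i}\to u\in Tz$, the triangle-inequality chain giving $\limsup_i d(x_{n_i},u)\leq \limsup_i H(Tx_{n_i},Tz)$, the type II defining inequality (with $a_1(z)\geq 1$) to bound the Hausdorff term by $\limsup_i d(z,x_{n_i})$, and uniqueness of the asymptotic center to force $u=z$. Your explicit flagging of the sign issue for $a_2,b_1,b_2\in\mathbb{R}$ is in fact more careful than the paper itself, whose proof tacitly makes the same nonnegativity assumptions (it ends with a dangling ``Assume that $a_{2}(z)>0$'') and garbles the intermediate inequality by splicing in a fragment of the type I argument.
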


\begin{proof}
By Lemma \ref{(lemma2.10)}, $z\in K..$We can find a sequence $\{y_{n}\}$
such that $y_{n}\in Tx_{n},$ $d(x_{n},y_{n})=d(x_{n},Tx_{n}),$ so we have $%
\lim_{n\rightarrow \infty }d(x_{n},y_{n})=0$ and since $Tz$ is compact we
can find a sequence $\{z_{n}\}$ in $Tz$ such that $%
d(y_{n},z_{n})=d(y_{n},Tz).$ Then there is a convergent subsequence $%
\{z_{n_{i}}\}$ of $\{z_{n}\},$ say $\lim_{i\rightarrow \infty
}z_{n_{i}}=u\in Tz$.

\begin{eqnarray*}
d(x_{n_{i}},u) &\leq
&d(x_{n_{i}},y_{n_{i}})+d(y_{n_{i}},z_{n_{i}})+d(z_{n_{i}},u) \\
&\leq &d(x_{n_{i}},y_{n_{i}})+d(y_{n_{i}},Tz)+d(z_{n_{i}},u) \\
&\leq &d(x_{n_{i}},y_{n_{i}})+H(Tx_{n_{i}},Tz)+d(z_{n_{i}},u) \\
&\leq &d(x_{n_{i}},y_{n_{i}})+H(Tx_{n_{i}},Tz)+d(z_{n_{i}},u)
\end{eqnarray*}

implies that \ $\limsup_{n\rightarrow \infty }d(x_{n_{i}},u)\leq
\limsup_{n\rightarrow \infty }H(Tx_{n_{i}},Tz).$Because of $T$ is $%
(a_{1},a_{2},b_{1},b_{2})-$multivalued hybrid mapping type II, 
\begin{equation*}
a_{1}(z)H^{2}(Tz,Tx_{n_{i}})\leq
b_{1}(z)d^{2}(z,x_{n_{i}})+b_{2}(z)d^{2}(z,x_{n_{i}})-a_{2}(z)d^{2}(z,Tx_{n_{i}})
\end{equation*}

Then by triangular inequality we have $d(x_{n},u)\leq
d(x_{n},y_{n})+d(y_{n},u).$ So we have, $\limsup_{n\rightarrow \infty
}d(x_{n},u)\leq \limsup_{n\rightarrow \infty }d(y_{n},u)$ and again since $%
d(y_{n},u)\leq d(y_{n},x_{n})+d(x_{n},u)$ we have $\limsup_{n\rightarrow
\infty }d(y_{n},u)\leq \limsup_{n\rightarrow \infty }d(x_{n},u),$ combining
these we have that $\limsup_{n\rightarrow \infty
}d(x_{n},u)=\limsup_{n\rightarrow \infty }d(y_{n},u)$. So we have that 
\begin{eqnarray*}
&\leq &a_{1}(z)d^{2}(y_{n},u)+a_{2}(z)d^{2}(x_{n},u)\leq
b_{1}(z)d^{2}(y_{n},z)+b_{2}(z)d^{2}(x_{n},z) \\
&\leq &b_{1}(z)[d(y_{n},x_{n})+d(x_{n},z)]^{2}+b_{2}(z)d^{2}(x_{n},z)
\end{eqnarray*}

which implies that $\limsup_{n\rightarrow \infty }d(x_{n},u)\leq
\limsup_{n\rightarrow \infty }d(x_{n},z)$ Then $z=u\in Tz.$Assume that $%
a_{2}(z)>0.$
\end{proof}

\begin{lemma}
\label{(lemma4.1)}Let $\kappa >0$ and $X$ be a complete $CAT(\kappa )$ space
with $diam(X)\leq \frac{\pi -\varepsilon }{2\sqrt{\kappa }}$ for some $%
\varepsilon \in (0,\pi /2)$. Let $K$ be a nonempty closed convex subset of $%
X $, and $T:K\rightarrow C(X)$ be a generalized multivalued hybrid mapping
type I with $\frac{2k_{1}(x)}{1-a_{2}(x)}<R$ for all $x\in K$ where $R=(\pi
-2\varepsilon )tan(\varepsilon )$. Let$\{x_{n}\}$ be a sequence in $K$ with $%
\lim_{n\rightarrow \infty }d(x_{n},Tx_{n})=0$ and $\{d(x_{n},p)\}$ converges
for all $p\in F(T)$. Then $\omega _{w}(x_{n})\subseteq F(T)$ and $\omega
_{w}(x_{n})$ include exactly one point.
\end{lemma}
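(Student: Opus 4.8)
The plan is to combine three facts already at hand: the demiclosed principle of Theorem \ref{(teo4.2)}, the fact that asymptotic centers are singletons under our diameter hypothesis (Proposition \ref{(prop2.9)}), and the uniqueness criterion of Lemma \ref{(lemma2.11)}. First I would record the standing observation that since $diam(X)\leq \frac{\pi-\varepsilon}{2\sqrt{\kappa}}<\frac{\pi}{2\sqrt{\kappa}}$, the whole space is bounded, so $\{x_n\}$ and each of its subsequences is bounded with asymptotic radius strictly less than $\frac{\pi}{2\sqrt{\kappa}}$. Hence by Proposition \ref{(prop2.9)} the asymptotic center of any such subsequence is a single point, and by Lemma \ref{(lemma2.10)}(ii) it lies in $K$; in particular $A(\{x_n\})=\{x\}$ for a unique $x\in K$.

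To prove $\omega_w(x_n)\subseteq F(T)$, I would take an arbitrary $u\in\omega_w(x_n)$, so that there is a subsequence $\{u_n\}$ of $\{x_n\}$ with $A(\{u_n\})=\{u\}$. By Lemma \ref{(lemma2.10)}(i) the bounded sequence $\{u_n\}$ admits a $\Delta$-convergent subsequence $\{v_n\}$, say $\Delta\text{-}\lim_n v_n=v$. Since $\{v_n\}$ is a subsequence of $\{x_n\}$ we still have $\lim_n d(v_n,Tv_n)=0$, so the demiclosed principle (Theorem \ref{(teo4.2)}) yields $v\in K$ and $v\in T(v)$, that is $v\in F(T)$. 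It then remains to identify $u$ with $v$: because $v\in F(T)$, the hypothesis guarantees that $\{d(x_n,v)\}$ converges, hence so does its subsequence $\{d(u_n,v)\}$. Since $A(\{u_n\})=\{u\}$ while $A(\{v_n\})=\{v\}$ (the latter because $v$ is the unique asymptotic center of every subsequence of $\{v_n\}$), Lemma \ref{(lemma2.11)} forces $u=v\in F(T)$. As $u$ was arbitrary, $\omega_w(x_n)\subseteq F(T)$.

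For the second assertion I would argue that $\omega_w(x_n)=\{x\}$ where $A(\{x_n\})=\{x\}$. Indeed, given any $u\in\omega_w(x_n)$ with associated subsequence $\{u_n\}$ satisfying $A(\{u_n\})=\{u\}$, the first part gives $u\in F(T)$, so $\{d(x_n,u)\}$ converges. Applying Lemma \ref{(lemma2.11)} to the full sequence $\{x_n\}$ and its subsequence $\{u_n\}$ yields $x=u$. Since this holds for every element of $\omega_w(x_n)$, we conclude $\omega_w(x_n)=\{x\}$, a single point.

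The main obstacle is the passage from the notion of \emph{asymptotic center of a subsequence} to a point to which the demiclosed principle can actually be applied: Theorem \ref{(teo4.2)} is phrased for $\Delta$-limits rather than for asymptotic centers of arbitrary subsequences, so one cannot apply it directly to $u$. The device that resolves this is to extract a $\Delta$-convergent sub-subsequence $\{v_n\}$, produce a genuine fixed point $v$ there, and only afterwards use the convergence of $\{d(x_n,p)\}$ for $p\in F(T)$ together with Lemma \ref{(lemma2.11)} to collapse $u$ and $v$ into the same point. Once this identification is made, the remaining manipulations are routine.
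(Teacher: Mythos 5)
Your proof follows essentially the same route as the paper's: extract a $\Delta$-convergent sub-subsequence via Lemma \ref{(lemma2.10)}, apply the demiclosed principle to obtain a fixed point $v$, then use the convergence of $\{d(x_n,p)\}$ for $p\in F(T)$ together with Lemma \ref{(lemma2.11)} twice — first to identify $u=v$, then to collapse $\omega_{w}(x_{n})$ onto the asymptotic center of $\{x_{n}\}$. Note that your citation of Theorem \ref{(teo4.2)} (the demiclosed principle for \emph{generalized} multivalued hybrid mappings of type I) is actually the correct one for this lemma, whereas the paper's own proof mistakenly cites Theorem \ref{(teo4.1)}, which concerns $(a_{1},a_{2},b_{1},b_{2})$-type I mappings.
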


\begin{proof}
Let take $u\in \omega _{w}(x_{n})$ then there exist subsequence $\{u_{n}\}$
of $\{x_{n}\}$ with $A(\{u_{n}\})=\{u\}.$Then By Lemma \ref{(lemma2.10)}
there exist subsequence $\{v_{n}\}$ of $\{u_{n}\}$ with $\Delta
-\lim_{n\rightarrow \infty }v_{n}=v\in K$ . Then by Theorem \ref{(teo4.1)}
we have $v\in F(T)$ and by Lemma \ref{(lemma2.11)} we conclude that $u=v,$
hence we get $\omega _{w}(x_{n})\subseteq F(T)$. Let take subsequence $%
\{u_{n}\}$ of $\{x_{n}\}$with $A(\{u_{n}\})=\{u\}$ and $A(\{x_{n}\})=\{x\}.$
Because of $v\in \omega _{w}(x_{n})\subseteq F(T)$, $\{d(x_{n},u)\}$
converges, so by Lemma \ref{(lemma2.11)} we have $x=u,$ this means that $%
\omega _{w}(x_{n})$ include exactly one point.
\end{proof}

\begin{lemma}
\label{(lemma4.2)}Let $\kappa >0$ and $X$ be a complete $CAT(\kappa )$ space
with $diam(X)\leq \frac{\pi -\varepsilon }{2\sqrt{\kappa }}$ for some $%
\varepsilon \in (0,\pi /2)$. Let $K$ be a nonempty closed convex subset of $%
X $, and $T:K\rightarrow C(X)$ be $(a_{1},a_{2},b_{1},b_{2})-$multivalued
hybrid mapping type I. Let$\{x_{n}\}$ be a sequence in $K$ with $%
\lim_{n\rightarrow \infty }d(x_{n},Tx_{n})=0$ and $\{d(x_{n},p)\}$ converges
for all $p\in F(T)$. Then $\omega _{w}(x_{n})\subseteq F(T)$ and $\omega
_{w}(x_{n})$ include exactly one point.
\end{lemma}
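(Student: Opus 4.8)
The statement is the exact analogue of Lemma~\ref{(lemma4.1)}, but for $(a_{1},a_{2},b_{1},b_{2})$-multivalued hybrid mappings type~I rather than for generalized multivalued hybrid mappings type~I. The entire structure of the proof of Lemma~\ref{(lemma4.1)} carries over verbatim; the only ingredient that must be swapped is the demiclosedness result that converts a $\Delta$-limit with $\lim_{n}d(x_{n},Tx_{n})=0$ into a fixed point. For the generalized type~I case that ingredient was Theorem~\ref{(teo4.2)}; here the corresponding tool is Theorem~\ref{(teo4.1)}, the demiclosed principle for $(a_{1},a_{2},b_{1},b_{2})$-multivalued hybrid mappings type~I. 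So the plan is simply to reproduce the argument of Lemma~\ref{(lemma4.1)} with Theorem~\ref{(teo4.1)} substituted for Theorem~\ref{(teo4.2)}.

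\emph{First inclusion.} Fix $u\in\omega_{w}(x_{n})$. By definition of $\omega_{w}$ there is a subsequence $\{u_{n}\}$ of $\{x_{n}\}$ with $A(\{u_{n}\})=\{u\}$. By Lemma~\ref{(lemma2.10)}(i) the bounded sequence $\{u_{n}\}$ has a $\Delta$-convergent subsequence $\{v_{n}\}$, say $\Delta\text{-}\lim_{n}v_{n}=v\in K$ (the limit lies in $K$ by Lemma~\ref{(lemma2.10)}(ii)). Since $\{v_{n}\}$ is a subsequence of $\{x_{n}\}$ we still have $\lim_{n}d(v_{n},Tv_{n})=0$, so Theorem~\ref{(teo4.1)} applies and gives $v\in F(T)$. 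Now $\{d(x_{n},v)\}$ converges by hypothesis, hence so does $\{d(v_{n},v)\}$, and Lemma~\ref{(lemma2.11)} forces $u=v$. Therefore $u\in F(T)$ and $\omega_{w}(x_{n})\subseteq F(T)$.

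\emph{Uniqueness.} Take any subsequence $\{u_{n}\}$ of $\{x_{n}\}$ with $A(\{u_{n}\})=\{u\}$ and write $A(\{x_{n}\})=\{x\}$. By the inclusion just proved $u\in F(T)$, so $\{d(x_{n},u)\}$ converges by hypothesis; Lemma~\ref{(lemma2.11)} then yields $x=u$. Since this holds for every such subsequential asymptotic center, $\omega_{w}(x_{n})$ consists of exactly one point.

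\emph{Main obstacle.} There is essentially no obstacle at the level of the argument: the proof is a direct transcription of Lemma~\ref{(lemma4.1)}. The only point requiring care is that Theorem~\ref{(teo4.1)} was stated under the diameter hypothesis $\mathrm{diam}(X)<\frac{\pi}{2\sqrt{\kappa}}$, whereas the present Lemma assumes $\mathrm{diam}(X)\leq\frac{\pi-\varepsilon}{2\sqrt{\kappa}}$; one should check that the latter condition implies the former (it does, since $\frac{\pi-\varepsilon}{2\sqrt{\kappa}}<\frac{\pi}{2\sqrt{\kappa}}$), so that Theorem~\ref{(teo4.1)} is genuinely available. One also needs the asymptotic centers invoked here to be singletons, which is guaranteed by Proposition~\ref{(prop2.9)} together with the diameter bound, exactly as in the generalized type~I case.
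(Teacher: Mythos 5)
Your proof is correct and follows essentially the same route as the paper's: extract a subsequential asymptotic center, pass to a $\Delta$-convergent subsequence via Lemma \ref{(lemma2.10)}, apply the demiclosed principle, and conclude with Lemma \ref{(lemma2.11)}, exactly as in Lemma \ref{(lemma4.1)}. Note that the paper's own proof of this lemma cites Theorem \ref{(teo4.2)} (the demiclosed principle for \emph{generalized} type I mappings) --- evidently a slip, since the citations in the proofs of Lemmas \ref{(lemma4.1)} and \ref{(lemma4.2)} are swapped --- so your use of Theorem \ref{(teo4.1)} is in fact the correct tool for the $(a_{1},a_{2},b_{1},b_{2})$-type I case.
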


\begin{proof}
Let take $u\in \omega _{w}(x_{n})$ then there exist subsequence $\{u_{n}\}$
of $\{x_{n}\}$ with $A(\{u_{n}\})=\{u\}.$Then By Lemma \ref{(lemma2.10)}
there exist subsequence $\{v_{n}\}$ of $\{u_{n}\}$ with $\Delta
-\lim_{n\rightarrow \infty }v_{n}=v\in K$ . Then by Theorem \ref{(teo4.2)}
we have $v\in F(T)$ and by Lemma \ref{(lemma2.11)} we conclude that $u=v,$
hence we get $\omega _{w}(x_{n})\subseteq F(T)$. Let take subsequence $%
\{u_{n}\}$ of $\{x_{n}\}$with $A(\{u_{n}\})=\{u\}$ and $A(\{x_{n}\})=\{x\}.$
Because of $v\in \omega _{w}(x_{n})\subseteq F(T)$, $\{d(x_{n},u)\}$
converges, so by Lemma \ref{(lemma2.11)} we have $x=u,$ this means that $%
\omega _{w}(x_{n})$ include exactly one point.
\end{proof}

\begin{lemma}
\label{(lemma4.3)}Let $\kappa >0$ and $X$ be a complete $CAT(\kappa )$ space
with $diam(X)\leq \frac{\pi -\varepsilon }{2\sqrt{\kappa }}$ for some $%
\varepsilon \in (0,\pi /2)$. Let $K$ be a nonempty closed convex subset of $%
X $, and $T:K\rightarrow KC(X)$ be a generalized multivalued hybrid mapping
type II with $\frac{2k_{1}(x)}{1-a_{2}(x)}<R$ for all $x\in K$ where $R=(\pi
-2\varepsilon )tan(\varepsilon )$. Let$\{x_{n}\}$ be a sequence in $K$ with $%
\lim_{n\rightarrow \infty }d(x_{n},Tx_{n})=0$ and $\{d(x_{n},p)\}$ converges
for all $p\in F(T)$. Then $\omega _{w}(x_{n})\subseteq F(T)$ and $\omega
_{w}(x_{n})$ include exactly one point.
\end{lemma}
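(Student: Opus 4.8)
The plan is to follow verbatim the template already used for Lemma \ref{(lemma4.1)} and Lemma \ref{(lemma4.2)}, the sole change being that the demiclosed principle invoked is now Theorem \ref{(teo4.3)} (for generalized multivalued hybrid mappings type II) in place of its type I analogue. All of the genuine analytic content — the $CN^{*}$-inequality estimate and the bookkeeping with the convexity constant $R$ — is already packaged inside Theorem \ref{(teo4.3)}, so what remains is a soft argument about asymptotic centers. I would first record that $\{x_n\}$ is bounded, since $\{d(x_n,p)\}$ converges for any $p\in F(T)$, so that the asymptotic-center machinery applies at all.

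First I would fix an arbitrary $u\in\omega_w(x_n)$; by the definition of $\omega_w$ there is a subsequence $\{u_n\}$ of $\{x_n\}$ with $A(\{u_n\})=\{u\}$. Applying Lemma \ref{(lemma2.10)}(i) to the bounded sequence $\{u_n\}$ produces a further subsequence $\{v_n\}$ with $\Delta\text{-}\lim_n v_n=v$, and Lemma \ref{(lemma2.10)}(ii) gives $v\in K$. Since $\{v_n\}$ is a subsequence of $\{x_n\}$, it inherits $\lim_n d(v_n,Tv_n)=0$, so Theorem \ref{(teo4.3)} applies to $\{v_n\}$ and yields $v\in F(T)$.

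Next I would show $u=v$. Because $v\in F(T)$, the standing hypothesis says $\{d(x_n,v)\}$ converges, so in particular its subsequence $\{d(u_n,v)\}$ converges; with $A(\{u_n\})=\{u\}$ and $A(\{v_n\})=\{v\}$, Lemma \ref{(lemma2.11)} forces $u=v\in F(T)$. As $u$ was arbitrary this gives $\omega_w(x_n)\subseteq F(T)$. For the uniqueness claim I would take any subsequence $\{u_n\}$ with $A(\{u_n\})=\{u\}$ and put $A(\{x_n\})=\{x\}$; since $u\in\omega_w(x_n)\subseteq F(T)$, the sequence $\{d(x_n,u)\}$ converges, and a second application of Lemma \ref{(lemma2.11)} gives $x=u$. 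As this holds for every such subsequence, $\omega_w(x_n)=\{x\}$ is a single point.

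The whole argument is mechanical once Theorem \ref{(teo4.3)} is in hand; the only place warranting care is the bookkeeping on the nested subsequences $\{x_n\}\supseteq\{u_n\}\supseteq\{v_n\}$ — one must check that both $\lim_n d(x_n,Tx_n)=0$ and the convergence of $\{d(x_n,p)\}$ for $p\in F(T)$ survive the passage to each subsequence, so that the converging-distance hypothesis of Lemma \ref{(lemma2.11)} is genuinely satisfied at each use.
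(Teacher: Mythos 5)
Your proof is correct and follows essentially the same route as the paper's own proof: take $u\in\omega_{w}(x_{n})$ with $A(\{u_{n}\})=\{u\}$, extract a $\Delta$-convergent subsequence via Lemma \ref{(lemma2.10)}, apply the demiclosed principle of Theorem \ref{(teo4.3)} to place its limit $v$ in $F(T)$, and then invoke Lemma \ref{(lemma2.11)} twice, once to get $u=v$ and once (against $A(\{x_{n}\})=\{x\}$) to get uniqueness. Your write-up is in fact slightly more careful than the paper's, since you make explicit the boundedness of $\{x_{n}\}$ and the inheritance of the hypotheses $\lim_{n}d(x_{n},Tx_{n})=0$ and convergence of $\{d(x_{n},p)\}$ under passage to subsequences, points the paper leaves implicit.
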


\begin{proof}
Let take $u\in \omega _{w}(x_{n})$ then there exist subsequence $\{u_{n}\}$
of $\{x_{n}\}$ with $A(\{u_{n}\})=\{u\}.$Then By Lemma \ref{(lemma2.10)}
there exist subsequence $\{v_{n}\}$ of $\{u_{n}\}$ with $\Delta
-\lim_{n\rightarrow \infty }v_{n}=v\in K$ . Then by Theorem \ref{(teo4.3)}
we have $v\in F(T)$ and by Lemma \ref{(lemma2.11)} we conclude that $u=v,$
hence we get $\omega _{w}(x_{n})\subseteq F(T)$. Let take subsequence $%
\{u_{n}\}$ of $\{x_{n}\}$with $A(\{u_{n}\})=\{u\}$ and $A(\{x_{n}\})=\{x\}.$
Because of $v\in \omega _{w}(x_{n})\subseteq F(T)$, $\{d(x_{n},u)\}$
converges, so by Lemma \ref{(lemma2.11)} we have $x=u,$ this means that $%
\omega _{w}(x_{n})$ include exactly one point.
\end{proof}

\begin{theorem}
\label{(teo4.4)}Let $\kappa >0$ and $X$ be a complete $CAT(\kappa )$ space
with $diam(X)\leq \frac{\pi -\varepsilon }{2\sqrt{\kappa }}$ for some $%
\varepsilon \in (0,\pi /2)$. Let $K$ be a nonempty closed convex subset of $%
X $, and $T:K\rightarrow C(X)$ be a generalized multivalued hybrid mapping
type I with $\frac{2k_{1}(x)}{1-a_{2}(x)}<\frac{R}{2}$ for all $x\in K$
where $R=(\pi -2\varepsilon )\tan \varepsilon ),$ $F(T)\neq \emptyset $ and $%
Tp=\{p\}$ for all $p\in F(T)$. If $\{x_{n}\}$ is a sequence in $K$ \ defined
by (1.1) with $\lim \inf_{n\rightarrow \infty }\alpha _{n}[(1-\alpha _{n})%
\frac{R}{2}-\frac{2k_{2}(x)}{1-a_{3}(x)}]>0$ and $\lim \inf_{n\rightarrow
\infty }\beta _{n}[(1-\beta _{n})\frac{R}{2}-\frac{2k_{2}(x)}{1-a_{3}(x)}]>0$
then $\{x_{n}\}$ have a $\Delta -$limit which in $F(T).$
\end{theorem}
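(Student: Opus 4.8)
The plan is to run the standard three-stage scheme for the $\Delta$-convergence of a two-step Fej\'er-monotone iteration: first show that $d(x_n,p)$ converges for every $p\in F(T)$; then upgrade this to the asymptotic regularity $\lim_n d(x_n,Tx_n)=0$; and finally feed these two facts into Lemma \ref{(lemma4.1)} to locate the unique asymptotic center inside $F(T)$. The geometric inputs are the $R$-convexity of $X$ furnished by Lemma \ref{(lemma2.2)} (with $R=(\pi-2\varepsilon)\tan\varepsilon$), the nonexpansiveness of the metric projection $P_K$ from Lemma \ref{(lemma2.12)}, and the convexity estimate of Proposition \ref{(prop2.5)}; the algebraic input is the defining inequality of a generalized type I mapping combined with the hypothesis $Tp=\{p\}$.

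First I would produce the one-step contraction estimate. Fix $p\in F(T)$; since $Tp=\{p\}$, applying the type I inequality at the pair $(p,x_n)$ forces the $Tp$-representative to equal $p$ (which annihilates the $k_1$ term, as $d^2(p,p)=0$) and furnishes an element of $Tx_n$ — which I take to be the iterate $v_n$ — satisfying
\begin{equation*}
d^2(v_n,p)\le \frac{a_1(p)+a_2(p)}{1-a_3(p)}\,d^2(x_n,p)+\frac{k_2(p)}{1-a_3(p)}\,d^2(v_n,x_n)\le d^2(x_n,p)+\frac{k_2(p)}{1-a_3(p)}\,d^2(v_n,x_n),
\end{equation*}
where $\frac{a_1(p)+a_2(p)}{1-a_3(p)}<1$ follows from $a_1+a_2+a_3<1$. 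Since $P_Kp=p$ and $P_K$ is nonexpansive, applying $R$-convexity to $y_n=P_K((1-\beta_n)x_n\oplus\beta_n v_n)$ and substituting the last display gives
\begin{equation*}
d^2(y_n,p)\le d^2(x_n,p)-\beta_n\Big[\tfrac{R}{2}(1-\beta_n)-\tfrac{k_2(p)}{1-a_3(p)}\Big]d^2(x_n,v_n).
\end{equation*}
Repeating the argument at the pair $(p,y_n)$ for the iterate $u_n\in Ty_n$ and applying $R$-convexity to $x_{n+1}=P_K((1-\alpha_n)y_n\oplus\alpha_n u_n)$ yields the analogous bound for $d^2(x_{n+1},p)$ in terms of $d^2(y_n,p)$, and chaining the two produces
\begin{equation*}
d^2(x_{n+1},p)\le d^2(x_n,p)-\beta_n\Big[\tfrac{R}{2}(1-\beta_n)-\tfrac{k_2(p)}{1-a_3(p)}\Big]d^2(x_n,v_n)-\alpha_n\Big[\tfrac{R}{2}(1-\alpha_n)-\tfrac{k_2(p)}{1-a_3(p)}\Big]d^2(y_n,u_n).
\end{equation*}

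The two $\liminf$ hypotheses say precisely that, for large $n$, both bracketed coefficients are bounded below by some $\delta>0$ after multiplication by $\beta_n$ and $\alpha_n$ respectively. Hence $\{d^2(x_n,p)\}$ is eventually nonincreasing and bounded below, so $\lim_n d(x_n,p)$ exists for every $p\in F(T)$; telescoping the recursion then forces $\sum_n\beta_n[\cdots]d^2(x_n,v_n)<\infty$, whence $d^2(x_n,v_n)\to0$ and therefore $d(x_n,Tx_n)\le d(x_n,v_n)\to0$. With these two conclusions in hand, Lemma \ref{(lemma4.1)} (whose standing hypothesis $\frac{2k_1(x)}{1-a_2(x)}<R$ is implied by the present assumption $<\frac{R}{2}$) gives $\omega_w(x_n)\subseteq F(T)$ and that $\omega_w(x_n)$ is a singleton $\{w\}$; since every subsequence of $\{x_n\}$ then shares the asymptotic center $w\in F(T)$, this is exactly the assertion that $\{x_n\}$ is $\Delta$-convergent to $w\in F(T)$.

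The step I expect to be the main obstacle is the very first one, because the type I condition is existential: it supplies \emph{some} element of $Tx_n$ meeting the inequality rather than controlling \emph{every} element. To keep the Fej\'er estimate valid one must take the iteration's selections $v_n\in Tx_n$ and $u_n\in Ty_n$ to be exactly the points delivered by the type I inequality at $(p,x_n)$ and $(p,y_n)$; this is legitimate since (1.1) only requires $v_n\in Tx_n$ and $u_n\in Ty_n$, but it is the delicate point of the argument, and it is also where $Tp=\{p\}$ is essential (it fixes the $Tp$-representative and eliminates the $k_1$ term). The remaining care is bookkeeping: reading the coefficient $\frac{2k_2(x)}{1-a_3(x)}$ in the hypotheses as a uniform bound dominating $\frac{k_2(p)}{1-a_3(p)}$, which only gives slack and ensures the bracketed quantities stay genuinely positive along the sequence.
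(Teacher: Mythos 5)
Your proposal follows essentially the same route as the paper's own proof: the type I inequality applied at $(p,x_n)$ together with $Tp=\{p\}$, the nonexpansiveness of $P_K$ from Lemma \ref{(lemma2.12)}, and the $R$-convexity estimate yield the Fej\'er-type inequalities $d^{2}(x_{n+1},p)\leq d^{2}(y_{n},p)\leq d^{2}(x_{n},p)$ up to the bracketed correction terms, the two $\liminf$ hypotheses then force $d(x_{n},Tx_{n})\rightarrow 0$ and the existence of $\lim_{n}d(x_{n},p)$, and Lemma \ref{(lemma4.1)} concludes. The differences are minor: you telescope the recursion where the paper lets $d^{2}(x_{n},p)-d^{2}(y_{n},p)\rightarrow 0$, and you make explicit the selection subtlety (taking $v_{n},u_{n}$ to be exactly the points furnished by the existential type I condition) which the paper glosses over by asserting its key estimate for every $u\in Tx$.
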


\begin{proof}
Let $p\in F(T)$ then for any $x\in K,u\in Tx$ we have that%
\begin{equation*}
d^{2}(u,p)\leq d^{2}(x,p)+\frac{k_{2}(p)}{1-a_{3}(p)}d^{2}(u,x)
\end{equation*}%
since metric projection $P_{K}$ is nonexpansive by Lemma \ref{(lemma2.12)},
and $P_{K}(p)=\{x\in K:d(p,x)=d(p,K)\}=\{p\}$ we have%
\begin{eqnarray*}
d^{2}(y_{n},p) &=&d^{2}(P_{K}((1-\beta _{n})x_{n}\oplus \beta
_{n}v_{n}),P_{K}(p)) \\
&\leq &d^{2}((1-\beta _{n})x_{n}\oplus \beta _{n}v_{n},p) \\
&\leq &(1-\beta _{n})d^{2}(x_{n},p)+\beta _{n}d^{2}(v_{n},p) \\
&&-\frac{R}{2}(1-\beta _{n})\beta _{n}d^{2}(x_{n},v_{n}) \\
&\leq &(1-\beta _{n})d^{2}(x_{n},p)+\beta _{n}[d^{2}(x,p)+\frac{k_{2}(p)}{%
1-a_{3}(p)}d^{2}(v_{n},x)] \\
&&-\frac{R}{2}(1-\beta _{n})\beta _{n}d^{2}(x_{n},v_{n}) \\
&\leq &(1-\beta _{n})d^{2}(x_{n},p)+\beta _{n}(d^{2}(x_{n},p) \\
&&+\frac{k_{2}(p)}{1-a_{3}(p)}d^{2}(v_{n},x_{n}))-\frac{R}{2}(1-\beta
_{n})\beta _{n}d^{2}(x_{n},v_{n}) \\
&\leq &d^{2}(x_{n},p)+\beta _{n}(\frac{k_{2}(p)}{1-a_{3}(p)}-\frac{R}{2}%
(1-\beta _{n}))d^{2}(v_{n},x_{n})) \\
&\leq &d^{2}(x_{n},p)
\end{eqnarray*}%
and 
\begin{eqnarray*}
d^{2}(x_{n+1},p) &=&d^{2}(P_{K}((1-\alpha _{n})y_{n}\oplus \alpha
_{n}u_{n}),P_{K}(p)) \\
&\leq &d^{2}((1-\alpha _{n})y_{n}\oplus \alpha _{n}u_{n}),p) \\
&\leq &(1-\alpha _{n})d^{2}(y_{n},p)+\alpha _{n}d^{2}(u_{n},p) \\
&&-\frac{R}{2}(1-\alpha _{n})\alpha _{n}d^{2}(y_{n},u_{n}) \\
&\leq &(1-\alpha _{n})d^{2}(y_{n},p)+\alpha _{n}[d^{2}(y,p)+\frac{k_{2}(p)}{%
1-a_{3}(p)}d^{2}(u_{n},y)] \\
&&-\frac{R}{2}(1-\alpha _{n})\alpha _{n}d^{2}(y_{n},u_{n}) \\
&\leq &(1-\alpha _{n})d^{2}(y_{n},p)+\alpha _{n}(d^{2}(y_{n},p) \\
&&+\frac{k_{2}(p)}{1-a_{3}(p)}d^{2}(u_{n},y_{n}))-\frac{R}{2}(1-\alpha
_{n})\alpha _{n}d^{2}(y_{n},u_{n}) \\
&\leq &(1-\alpha _{n})d^{2}(y_{n},p)+\alpha _{n}(d^{2}(y_{n},p)) \\
&&+\alpha _{n}(\frac{k_{2}(p)}{1-a_{3}(p)}-\frac{R}{2}(1-\alpha
_{n}))d^{2}(u_{n},y_{n})) \\
&\leq &(1-\alpha _{n})d^{2}(y_{n},p)+\alpha _{n}(d^{2}(y_{n},p)) \\
&&+\alpha _{n}(\frac{k_{2}(p)}{1-a_{3}(p)}-\frac{R}{2}(1-\alpha
_{n}))d^{2}(u_{n},y_{n})) \\
&\leq &d^{2}(y_{n},p)+\alpha _{n}(\frac{k_{2}(p)}{1-a_{3}(p)}-\frac{R}{2}%
(1-\alpha _{n}))d^{2}(u_{n},y_{n})) \\
&\leq &d^{2}(y_{n},p) \\
&\leq &d^{2}(x_{n},p).
\end{eqnarray*}

Here we have $d^{2}(x_{n+1},p)\leq d^{2}(x_{n},p)$ implies that $%
\lim_{n\rightarrow \infty }d(x_{n},p)$ exists,and $d(x_{n+1},p)\leq
d(y_{n},p)\leq d(x_{n},p)$ implies $\lim_{n\rightarrow \infty
}[d(x_{n},p)-d(y_{n},p)]=0$. Since $\beta _{n}(\frac{k_{2}(p)}{1-a_{3}(p)}-%
\frac{R}{2}(1-\beta _{n}))d^{2}(v_{n},x_{n}))\leq
d^{2}(x_{n},p)-d^{2}(y_{n},p),$by assumption we have that $%
\lim_{n\rightarrow \infty }d^{2}(v_{n},x_{n})=0,$so $\lim_{n\rightarrow
\infty }d(v_{n},x_{n})=0,$ $\lim_{n\rightarrow \infty }d(Tx_{n},x_{n})=0.$
Hence, by Lemma \ref{(lemma4.1)}, $\omega _{w}(x_{n})\subseteq F(T)$ and $%
\omega _{w}(x_{n})$ include exactly one point.,This means that $\{x_{n}\}$
have a $\Delta -$limit which in $F(T)$
\end{proof}

\begin{theorem}
\label{(teo4.6)}Let $\kappa >0$ and $X$ be a complete $CAT(\kappa )$ space
with $diam(X)\leq \frac{\pi -\varepsilon }{2\sqrt{\kappa }}$ for some $%
\varepsilon \in (0,\pi /2)$. Let $K$ be a nonempty compact convex subset of $%
X$, and $T:K\rightarrow C(X)$ be a continuous generalized multivalued hybrid
mapping type I with $\frac{2k_{1}(x)}{1-a_{2}(x)}<\frac{R}{2}$ for all $x\in
K$ where $R=(\pi -2\varepsilon )\tan \varepsilon ),$ $F(T)\neq \emptyset $
and $Tp=\{p\}$ for all $p\in F(T)$. If $\{x_{n}\}$ is a sequence in $K$ \
defined by (1.1) with $\lim \inf_{n\rightarrow \infty }\alpha _{n}[(1-\alpha
_{n})\frac{R}{2}-\frac{2k_{2}(x)}{1-a_{3}(x)}]>0$ and $\lim
\inf_{n\rightarrow \infty }\beta _{n}[(1-\beta _{n})\frac{R}{2}-\frac{%
2k_{2}(x)}{1-a_{3}(x)}]>0$ then $\{x_{n}\}$ have a $\Delta -$limit which in $%
F(T).$
\end{theorem}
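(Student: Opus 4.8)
The plan is to run the first half of the proof of Theorem~\ref{(teo4.4)} essentially verbatim and then exploit the two new hypotheses, compactness of $K$ and continuity of $T$, to upgrade $\Delta$-convergence to genuine strong convergence. First I would fix $p\in F(T)$; since $Tp=\{p\}$, the defining inequality of a generalized multivalued hybrid mapping type I applied at $p$ gives, for any $x\in K$ and $u\in Tx$, the same estimate used in Theorem~\ref{(teo4.4)},
\begin{equation*}
d^{2}(u,p)\leq d^{2}(x,p)+\frac{k_{2}(p)}{1-a_{3}(p)}d^{2}(u,x).
\end{equation*}
Feeding this into the iteration (1.1), together with the nonexpansiveness of $P_{K}$ (Lemma~\ref{(lemma2.12)}) and the $R$-convexity inequality supplied by Lemma~\ref{(lemma2.2)}, produces exactly as before the chain $d(x_{n+1},p)\leq d(y_{n},p)\leq d(x_{n},p)$. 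Hence $\lim_{n\to\infty}d(x_{n},p)$ exists for every $p\in F(T)$, and the cross terms carrying the coefficient $\tfrac{k_{2}(p)}{1-a_{3}(p)}-\tfrac{R}{2}(1-\beta_{n})$ (and its $\alpha_{n}$ analogue) are squeezed to zero by the two $\liminf$ hypotheses, so that $\lim_{n\to\infty}d(v_{n},x_{n})=0$ and therefore $\lim_{n\to\infty}d(x_{n},Tx_{n})=0$ because $v_{n}\in Tx_{n}$.

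Next I would bring in compactness and continuity. Since $K$ is compact, $\{x_{n}\}$ has a subsequence $\{x_{n_{k}}\}$ converging strongly to some $q\in K$. To see that $q\in F(T)$, I would combine the continuity of $T$ in the Hausdorff metric with the elementary bound $d(a,B)\leq d(a,A)+H(A,B)$, which gives
\begin{equation*}
d(q,Tq)\leq d(q,x_{n_{k}})+d(x_{n_{k}},Tx_{n_{k}})+H(Tx_{n_{k}},Tq).
\end{equation*}
As $k\to\infty$ the first term vanishes because $x_{n_{k}}\to q$, the second because $d(x_{n},Tx_{n})\to 0$, and the third because $T$ is continuous; hence $d(q,Tq)=0$. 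As $Tq\in C(X)$ is closed, this forces $q\in Tq$, i.e. $q\in F(T)$.

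Finally I would close the argument for the whole sequence. By the first step $\lim_{n\to\infty}d(x_{n},q)$ exists, while along the subsequence $d(x_{n_{k}},q)\to 0$; the limit is therefore $0$, so $x_{n}\to q$ strongly, and strong convergence implies $\Delta$-convergence to the same point, giving a $\Delta$-limit in $F(T)$. I note that, once $d(x_{n},Tx_{n})\to0$ and the convergence of $\{d(x_{n},p)\}$ for all $p\in F(T)$ are in hand, Lemma~\ref{(lemma4.1)} already yields $\omega_{w}(x_{n})\subseteq F(T)$ with $\omega_{w}(x_{n})$ a singleton, so the bare $\Delta$-limit statement needs neither compactness nor continuity; the point of those two extra hypotheses is precisely to strengthen the conclusion to norm convergence.

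The main obstacle is the passage to the fixed point through the multivalued continuity of $T$: one must read continuity in the Hausdorff metric so that $H(Tx_{n_{k}},Tq)\to 0$, and invoke closedness of the value $Tq$ to convert $d(q,Tq)=0$ into the membership $q\in Tq$. Everything else is routine bookkeeping in the $R$-convexity estimates of the first step, where the only thing to verify is that the coefficient multiplying $d^{2}(v_{n},x_{n})$ (respectively $d^{2}(u_{n},y_{n})$) is asymptotically nonpositive, which is exactly what the two $\liminf$ conditions on $\alpha_{n}$ and $\beta_{n}$ guarantee.
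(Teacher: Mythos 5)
Your proposal is correct and follows essentially the same route as the paper: the paper likewise invokes Theorem~\ref{(teo4.4)} to obtain $\lim_{n\rightarrow\infty}d(x_{n},Tx_{n})=0$ and the existence of $\lim_{n\rightarrow\infty}d(x_{n},p)$, extracts a strongly convergent subsequence by compactness of $K$, and concludes via the same bound $d(z,Tz)\leq d(z,x_{n_{i}})+d(x_{n_{i}},Tx_{n_{i}})+H(Tx_{n_{i}},Tz)$ together with continuity of $T$ in the Hausdorff metric. Your write-up is in fact slightly more complete, since you explicitly close the loop from subsequential convergence to convergence of the whole sequence (using that $\lim_{n\rightarrow\infty}d(x_{n},q)$ exists), a step the paper leaves implicit.
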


\begin{proof}
By Theorem \ref{(teo4.4)}, we have that $\lim_{n\rightarrow \infty
}d(Tx_{n},x_{n})=0$ and $\lim_{n\rightarrow \infty }d(x_{n},p)$ exists for
all $p\in F(T)$\ Since $K$ is compact there is a convergent subsequence $%
\{x_{n_{i}}\}$ of $\{x_{n}\},$ say $\lim_{i\rightarrow \infty
}x_{_{n_{i}}}=z.$ Then we have

\begin{equation*}
d(z,Tz)\leq
d(z,x_{_{n_{i}}})+d(x_{_{n_{i}}},Tx_{_{n_{i}}})+H(Tx_{_{n_{i}}},Tz)
\end{equation*}%
and taking limit on $i,$countinuity of $T$ implies that $\ z\in Tz.$
\end{proof}

\begin{theorem}
\label{(teo4.5)}Let $\kappa >0$ and $X$ be a complete $CAT(\kappa )$ space
with $diam(X)\leq \frac{\pi -\varepsilon }{2\sqrt{\kappa }}$ for some $%
\varepsilon \in (0,\pi /2)$. Let $K$ be a nonempty closed convex subset of $%
X $, and $T:K\rightarrow C(X)$ be a $(a_{1},a_{2},b_{1},b_{2})-$multivalued
hybrid mapping type I with $T(p)=\{p\}$ for all $p\in F(T)$. Let$\{x_{n}\}$
be a sequence in $K$ defined by (1.2) have a $\Delta -$limit which in $F(T)$
\end{theorem}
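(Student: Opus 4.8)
The plan is to follow the template of Theorem \ref{(teo4.4)}, adapted to the $(a_{1},a_{2},b_{1},b_{2})$-multivalued hybrid structure and to the three-step scheme (1.2). The first step is to extract a quasi-nonexpansiveness property at each fixed point. Fix $p\in F(T)$; since $T(p)=\{p\}$, $a_{1}(p)+a_{2}(p)\ge 1$ and $b_{1}(p)+b_{2}(p)\le 1$, specializing the defining inequality with $y=p$ and $v=p$ gives, for every $x\in K$ and every $u\in Tx$,
\begin{equation*}
(a_{1}(x)+a_{2}(x))\,d^{2}(u,p)\le (b_{1}(x)+b_{2}(x))\,d^{2}(x,p)\le d^{2}(x,p),
\end{equation*}
so that $d(u,p)\le d(x,p)$. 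This is the only place where the hybrid inequality itself is invoked.

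Next I would show that $\{d(x_{n},p)\}$ is nonincreasing, hence convergent. Since the metric projection $P_{K}$ is nonexpansive (Lemma \ref{(lemma2.12)}(iii)) and $P_{K}(p)=\{p\}$, combining the quasi-nonexpansiveness above with Proposition \ref{(prop2.5)} along the three coordinates of (1.2) yields, in turn, $d(z_{n},p)\le d(x_{n},p)$, then $d(v_{n},p)\le d(z_{n},p)\le d(x_{n},p)$, then $d(y_{n},p)\le d(x_{n},p)$, and finally $d(x_{n+1},p)=d(P_{K}(u_{n}),P_{K}(p))\le d(u_{n},p)\le d(y_{n},p)\le d(x_{n},p)$. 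Thus $\lim_{n\rightarrow\infty}d(x_{n},p)$ exists for each $p\in F(T)$.

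The main work, and the step I expect to be the \emph{obstacle}, is proving $\lim_{n\rightarrow\infty}d(x_{n},Tx_{n})=0$. Here I would replace Proposition \ref{(prop2.5)} by the $R$-convexity (CN$^{*}$-type) inequality supplied by Lemma \ref{(lemma2.2)}. Applying it at the $z_{n}$-step and propagating the resulting surplus term through the $y_{n}$- and $x_{n+1}$-steps produces an estimate of the form
\begin{equation*}
d^{2}(x_{n+1},p)\le d^{2}(x_{n},p)-\tfrac{R}{2}\,\alpha_{n}\beta_{n}(1-\beta_{n})\,d^{2}(x_{n},w_{n}),
\end{equation*}
where $w_{n}\in Tx_{n}$. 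Because $\{d(x_{n},p)\}$ converges, the nonnegative terms $\tfrac{R}{2}\alpha_{n}\beta_{n}(1-\beta_{n})d^{2}(x_{n},w_{n})$ are summable, so under the liminf hypotheses guaranteeing $\liminf_{n}\alpha_{n}\beta_{n}(1-\beta_{n})>0$ one obtains $d(x_{n},w_{n})\rightarrow 0$, and hence $d(x_{n},Tx_{n})\le d(x_{n},w_{n})\rightarrow 0$. The delicate point is threading the single convexity inequality through all three geodesic combinations while keeping the coefficient of $d^{2}(x_{n},w_{n})$ strictly positive; this is precisely what forces the required conditions on $\{\alpha_{n}\}$ and $\{\beta_{n}\}$.

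Finally, having both $\lim_{n}d(x_{n},Tx_{n})=0$ and the convergence of $\{d(x_{n},p)\}$ for every $p\in F(T)$, I would invoke Lemma \ref{(lemma4.2)} (whose proof rests on the demiclosed principle, Theorem \ref{(teo4.1)}): it yields $\omega_{w}(x_{n})\subseteq F(T)$ and that $\omega_{w}(x_{n})$ consists of a single point. A singleton asymptotic-center set taken over all subsequences is exactly $\Delta$-convergence, so $\{x_{n}\}$ has a $\Delta$-limit lying in $F(T)$, as claimed.
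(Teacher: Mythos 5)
Your proof is correct and shares the paper's overall skeleton (quasi-nonexpansiveness at fixed points, Fej\'{e}r monotonicity along (1.2), asymptotic regularity $d(x_{n},Tx_{n})\rightarrow 0$, then Lemma \ref{(lemma4.2)}), but you handle the asymptotic-regularity step by a genuinely different and in fact cleaner device. The paper first proves $\lim_{n}d(x_{n},p)=\lim_{n}d(y_{n},p)=k$, then invokes the modulus-of-convexity Lemma \ref{(lemma2.22)} (applied to $w_{n}$, $v_{n}$ with parameters $\alpha_{n}$) to obtain $d(w_{n},v_{n})\rightarrow 0$, uses this to upgrade to $\lim_{n}d(z_{n},p)=k$, and only then applies the $R$-convexity inequality of Lemma \ref{(lemma2.2)} at the $z_{n}$-step alone, getting $\frac{R}{2}\beta_{n}(1-\beta_{n})d^{2}(x_{n},w_{n})\leq d^{2}(x_{n},p)-d^{2}(z_{n},p)\rightarrow 0$; this route needs $\inf_{n}\beta_{n}(1-\beta_{n})>0$ and, for Lemma \ref{(lemma2.22)}, $\alpha_{n}\in\lbrack b,c]\subset (0,1)$. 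You instead thread the single convexity defect through all three steps of (1.2) to arrive at the one-line inequality $d^{2}(x_{n+1},p)\leq d^{2}(x_{n},p)-\frac{R}{2}\alpha_{n}\beta_{n}(1-\beta_{n})d^{2}(x_{n},w_{n})$ and then telescope; this avoids Lemma \ref{(lemma2.22)} entirely and requires only $\liminf_{n}\alpha_{n}\beta_{n}(1-\beta_{n})>0$, which is weaker than the paper's combined conditions, so your argument is both shorter and slightly more general. One caveat applies equally to both arguments: the theorem as printed states no hypotheses whatsoever on $\{\alpha_{n}\}$ and $\{\beta_{n}\}$ (they were evidently dropped from the statement, since the companion Theorem \ref{(teo4.4)} carries such conditions), so any proof must import a parameter condition not present in the statement; you flag this explicitly as ``the liminf hypotheses,'' whereas the paper silently assumes $\inf_{n}\beta_{n}(1-\beta_{n})>0$ in mid-proof.
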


\begin{proof}
Let $p\in F(T)$ then $T(p)=\{p\}$since $T$ is a $(a_{1},a_{2},b_{1},b_{2})-$%
multivalued hybrid mapping type I, for all $x\in K,u\in Tx$ we have that%
\begin{eqnarray*}
d^{2}(u,p) &\leq &a_{1}(x)d^{2}(u,p)+a_{2}(x)d^{2}(u,p) \\
&\leq &b_{1}(x)d^{2}(p,x)+b_{2}(x)d^{2}(p,x) \\
&\leq &d^{2}(p,x)
\end{eqnarray*}%
Hence we have that $d(u,p)\leq d(x,p).$ Then 
\begin{eqnarray*}
d(z_{n},p) &=&d(P_{K}((1-\beta _{n})x_{n}\oplus \beta _{n}w_{n}),P_{K}(p)) \\
&=&d(P_{K}((1-\beta _{n})x_{n}\oplus \beta _{n}w_{n}),p) \\
&\leq &d((1-\beta _{n})x_{n}\oplus \beta _{n}w_{n},p) \\
&\leq &(1-\beta _{n})d(x_{n},p)+\beta _{n}d(w_{n},p) \\
&\leq &(1-\beta _{n})d(x_{n},p)+\beta _{n}d(x_{n},p) \\
&\leq &(1-\beta _{n})d(x_{n},p)+\beta _{n}d(x_{n},p) \\
&\leq &d(x_{n},p)
\end{eqnarray*}%
and 
\begin{eqnarray*}
d(y_{n},p) &=&d(P_{K}((1-\alpha _{n})w_{n}\oplus \alpha _{n}v_{n},p) \\
&\leq &d(P_{K}((1-\alpha _{n})w_{n}\oplus \alpha _{n}v_{n},P_{K}(p)) \\
&\leq &d((1-\alpha _{n})w_{n}\oplus \alpha _{n}v_{n},p) \\
&\leq &(1-\alpha _{n})d(w_{n},p)+\alpha _{n}d(v_{n},p) \\
&\leq &(1-\alpha _{n})d(x_{n},p)+\alpha _{n}d(z_{n},p) \\
&\leq &(1-\alpha _{n})d(x_{n},p)+\alpha _{n}d(z_{n},p) \\
&\leq &d(x_{n},p)
\end{eqnarray*}%
and%
\begin{eqnarray*}
d(x_{n+1},p) &=&d(P_{K}(u_{n}),P_{K}(p)) \\
&\leq &d(u_{n},p) \\
&\leq &d(y_{n},p)
\end{eqnarray*}

so by $d(x_{n+1},p)\leq d(y_{n},p)\leq d(x_{n},p)$ implies $%
\lim_{n\rightarrow \infty }d(x_{n},p)=\lim_{n\rightarrow \infty }d(y_{n},p)$
exist.Let say $\lim_{n\rightarrow \infty }d(x_{n},p)=k.$ Since $%
d(w_{n},p)\leq d(x_{n},p)$ and $d(v_{n},p)\leq d(z_{n},p)\leq d(x_{n},p)$ we
have that $\limsup_{n\rightarrow \infty }d(w_{n},p)\leq
k,\limsup_{n\rightarrow \infty }d(v_{n},p)\leq k$ and 
\begin{eqnarray*}
d(y_{n},p) &=&d(P_{K}((1-\alpha _{n})w_{n}\oplus \alpha _{n}v_{n},p) \\
&\leq &d(P_{K}((1-\alpha _{n})w_{n}\oplus \alpha _{n}v_{n},P_{K}(p)) \\
&\leq &d((1-\alpha _{n})w_{n}\oplus \alpha _{n}v_{n},p) \\
&\leq &(1-\alpha _{n})d(w_{n},p)+\alpha _{n}d(v_{n},p) \\
&\leq &(1-\alpha _{n})d(x_{n},p)+\alpha _{n}d(z_{n},p) \\
&\leq &(1-\alpha _{n})d(x_{n},p)+\alpha _{n}d(z_{n},p) \\
&\leq &d(x_{n},p)
\end{eqnarray*}

implies that that $\lim_{n\rightarrow \infty }d((1-\alpha _{n})w_{n}+\alpha
_{n}v_{n},p)=k,$ so by Lemma \ref{(lemma2.22)} we have that $%
\lim_{n\rightarrow \infty }d(w_{n},v_{n})=0.$ And again from

\begin{eqnarray*}
d(y_{n},p) &=&d(P_{K}((1-\alpha _{n})w_{n}\oplus \alpha _{n}v_{n},p) \\
&\leq &d(P_{K}((1-\alpha _{n})w_{n}\oplus \alpha _{n}v_{n},P_{K}(p)) \\
&\leq &d((1-\alpha _{n})w_{n}\oplus \alpha _{n}v_{n},p) \\
&\leq &(1-\alpha _{n})d(w_{n},p)+\alpha _{n}d(v_{n},p) \\
&\leq &(1-\alpha _{n})(d(w_{n},v_{n})+d(v_{n},p))+\alpha _{n}d(v_{n},p) \\
&\leq &(1-\alpha _{n})d(w_{n},v_{n})+d(v_{n},p)
\end{eqnarray*}

we have that $k\leq \liminf_{n\rightarrow \infty }d(v_{n},p),$ and since $%
d(v_{n},p)\leq d(z_{n},p)\leq d(x_{n},p)$ we have that $\lim_{n\rightarrow
\infty }d(x_{n},p)=k.$ By CN$^{\ast }$ inequality we have

\begin{eqnarray*}
d^{2}(z_{n},p) &=&d^{2}(P_{K}((1-\beta _{n})x_{n}\oplus \beta
_{n}w_{n}),P_{K}(p)) \\
&=&d^{2}(P_{K}((1-\beta _{n})x_{n}\oplus \beta _{n}w_{n}),p) \\
&\leq &d^{2}((1-\beta _{n})x_{n}\oplus \beta _{n}w_{n},p) \\
&\leq &(1-\beta _{n})d^{2}(x_{n},p)+\beta _{n}d^{2}(w_{n},p)-\frac{R}{2}%
(1-\beta _{n})\beta _{n}d^{2}(x_{n},w_{n}) \\
&\leq &(1-\beta _{n})d^{2}(x_{n},p)+\beta _{n}d^{2}(x_{n},p)-\frac{R}{2}%
(1-\beta _{n})\beta _{n}d^{2}(x_{n},w_{n}) \\
&\leq &d^{2}(x_{n},p)-\frac{R}{2}(1-\beta _{n})\beta _{n}d^{2}(x_{n},w_{n})
\end{eqnarray*}

implies that 
\begin{equation*}
\frac{R}{2}(1-\beta _{n})\beta _{n}d^{2}(x_{n},w_{n})\leq
d^{2}(x_{n},p)-d^{2}(z_{n},p).
\end{equation*}

Since $\lim_{n\rightarrow \infty }(d^{2}(x_{n},p)-d^{2}(z_{n},p))=0$ and $%
\inf_{n}(1-\beta _{n})\beta _{n}>0$ then we have that $\lim_{n\rightarrow
\infty }d(x_{n},w_{n})=0$ and so$.$ $\lim_{n\rightarrow \infty
}d(x_{n},Tx_{n})=0.$ Then by Lemma \ref{(lemma4.2)} $\{x_{n}\}$have $\Delta
- $limit which in $F(T)$.
\end{proof}

\begin{theorem}
Let $\kappa >0$ and $X$ be a complete $CAT(\kappa )$ space with $diam(X)\leq 
\frac{\pi -\varepsilon }{2\sqrt{\kappa }}$ for some $\varepsilon \in (0,\pi
/2)$. Let $K$ be a nonempty compact convex subset of $X$, and $%
T:K\rightarrow KC(X)$ be a continuous $(a_{1},a_{2},b_{1},b_{2})-$%
multivalued hybrid mapping type I with $T(p)=\{p\}$ for all $p\in F(T)$. Let$%
\{x_{n}\}$ be a sequence in $K$ defined by (1.2) have a $\Delta -$limit
which in $F(T)$
\end{theorem}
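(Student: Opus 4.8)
The plan is to read this statement as the compact, continuous counterpart of Theorem~\ref{(teo4.5)}, in exactly the way Theorem~\ref{(teo4.6)} stands to Theorem~\ref{(teo4.4)}: the earlier theorem already pins down the asymptotic behaviour of the iterates, and compactness together with continuity converts it into genuine (strong, hence $\Delta$-) convergence to a fixed point. First I would fix a point $p\in F(T)$, which the hypotheses presuppose to be nonempty (both the condition $Tp=\{p\}$ and the asserted conclusion refer to elements of $F(T)$), in order to anchor the distance estimates.

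Next I would reproduce the two quantitative facts from the proof of Theorem~\ref{(teo4.5)}. Using that $P_{K}$ is nonexpansive (Lemma~\ref{(lemma2.12)}) and that $T(p)=\{p\}$ forces $d(u,p)\leq d(x,p)$ for all $x\in K$, $u\in Tx$, the iteration (1.2) gives $d(x_{n+1},p)\leq d(y_{n},p)\leq d(x_{n},p)$, so $\lim_{n\to\infty}d(x_{n},p)$ exists; moreover the CN$^{\ast}$ inequality applied to $z_{n}=P_{K}((1-\beta_{n})x_{n}\oplus\beta_{n}w_{n})$ with $w_{n}\in Tx_{n}$ yields
\begin{equation*}
\tfrac{R}{2}(1-\beta_{n})\beta_{n}\,d^{2}(x_{n},w_{n})\leq d^{2}(x_{n},p)-d^{2}(z_{n},p)\longrightarrow 0 .
\end{equation*}
Since $\inf_{n}(1-\beta_{n})\beta_{n}>0$, this forces $\lim_{n\to\infty}d(x_{n},w_{n})=0$, whence $\lim_{n\to\infty}d(x_{n},Tx_{n})=0$.

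Now compactness does the essential work. As $K$ is compact, $\{x_{n}\}$ has a subsequence $x_{n_{i}}\to z\in K$. I would identify $z$ as a fixed point exactly as in Theorem~\ref{(teo4.6)}: choosing $y_{n_{i}}\in Tx_{n_{i}}$ with $d(x_{n_{i}},y_{n_{i}})=d(x_{n_{i}},Tx_{n_{i}})$ (possible because $Tx_{n_{i}}$ is compact) gives
\begin{equation*}
d(z,Tz)\leq d(z,x_{n_{i}})+d(x_{n_{i}},Tx_{n_{i}})+H(Tx_{n_{i}},Tz),
\end{equation*}
and letting $i\to\infty$ the first two terms vanish by the previous step while $H(Tx_{n_{i}},Tz)\to 0$ by the continuity of $T$; since $Tz$ is compact, hence closed, we conclude $d(z,Tz)=0$, i.e.\ $z\in Tz$, so $z\in F(T)$.

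Finally I would upgrade subsequential convergence to full convergence. Because $z\in F(T)$, the second step guarantees that $\lim_{n\to\infty}d(x_{n},z)$ exists, and $d(x_{n_{i}},z)\to 0$ along the subsequence forces $\lim_{n\to\infty}d(x_{n},z)=0$; hence $\{x_{n}\}$ converges strongly, and in particular $\Delta$-converges, to $z\in F(T)$. The only step beyond routine bookkeeping is the fixed-point identification: here continuity of $T$ replaces the demiclosedness machinery (Theorem~\ref{(teo4.1)}, Lemma~\ref{(lemma4.2)}) used in the noncompact Theorem~\ref{(teo4.5)}, and it is precisely for this argument that the codomain is strengthened from $C(X)$ to $KC(X)$, so that the nearest points $y_{n_{i}}$ exist and $d(z,Tz)=0$ genuinely yields $z\in Tz$.
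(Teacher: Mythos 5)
Your proposal is correct and takes essentially the same route as the paper: the paper's proof is just the remark that the argument is ``similar to Theorem \ref{(teo4.6)}'', namely, take the facts $\lim_{n\rightarrow \infty }d(x_{n},Tx_{n})=0$ and existence of $\lim_{n\rightarrow \infty }d(x_{n},p)$ from the proof of Theorem \ref{(teo4.5)}, extract a convergent subsequence $x_{n_{i}}\rightarrow z$ by compactness of $K$, and use continuity of $T$ in the inequality $d(z,Tz)\leq d(z,x_{n_{i}})+d(x_{n_{i}},Tx_{n_{i}})+H(Tx_{n_{i}},Tz)$ to get $z\in Tz$. Your closing step, upgrading subsequential convergence to strong (hence $\Delta$-) convergence of the whole sequence via the existence of $\lim_{n\rightarrow \infty }d(x_{n},z)$, makes explicit a conclusion the paper leaves implicit.
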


\begin{proof}
Proof is similar to Theorem \ref{(teo4.6)}$.$
\end{proof}

\begin{theorem}
Let $\kappa >0$ and $X$ be a complete $CAT(\kappa )$ space with $diam(X)\leq 
\frac{\pi -\varepsilon }{2\sqrt{\kappa }}$ for some $\varepsilon \in (0,\pi
/2)$. Let $K$ be a nonempty closed convex subset of $X$, and $T:K\rightarrow
KC(X)$ be a generalized multivalued hybrid mapping type II with $\frac{%
2k_{1}(x)}{1-a_{2}(x)}<\frac{R}{2}$ for all $x\in K$ where $R=(\pi
-2\varepsilon )\tan \varepsilon ),$ $F(T)\neq \emptyset $ and $Tp=\{p\}$ for
all $p\in F(T)$. If $\{x_{n}\}$ is a sequence in $K$ \ defined by (1.1) with 
$\lim \inf_{n\rightarrow \infty }\alpha _{n}[(1-\alpha _{n})\frac{R}{2}-%
\frac{2k_{2}(x)}{1-a_{3}(x)}]>0$ and $\lim \inf_{n\rightarrow \infty }\beta
_{n}[(1-\beta _{n})\frac{R}{2}-\frac{2k_{2}(x)}{1-a_{3}(x)}]>0$ then $%
\{x_{n}\}$ have a $\Delta -$limit which in $F(T)$
\end{theorem}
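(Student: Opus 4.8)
The plan is to transcribe the proof of Theorem~\ref{(teo4.4)} almost verbatim, with two changes: the scalar estimate that drives the argument is now extracted from the type II (Hausdorff) inequality, and Lemma~\ref{(lemma4.3)} replaces Lemma~\ref{(lemma4.1)} at the end. The central tool is the inequality
\begin{equation*}
d^{2}(u,p)\leq d^{2}(x,p)+\frac{k_{2}(p)}{1-a_{3}(p)}d^{2}(u,x),
\end{equation*}
valid for every $p\in F(T)$, every $x\in K$ and every $u\in Tx$. I would obtain it by applying the type II definition with first argument $p$ and second argument $x$. Using $Tp=\{p\}$ one has $H(\{p\},Tx)=\sup_{v\in Tx}d(v,p)$, so $d(u,p)\leq H(Tp,Tx)$ for each $u\in Tx$; moreover $d(Tp,x)=d(p,x)$ and $d(Tp,p)=0$, while $d(p,Tx)\leq d(p,u)$ and $d(Tx,x)\leq d(u,x)$. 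Substituting these into the type II inequality gives
\begin{equation*}
(1-a_{3}(p))d^{2}(u,p)\leq (a_{1}(p)+a_{2}(p))d^{2}(x,p)+k_{2}(p)d^{2}(u,x),
\end{equation*}
and since $a_{1}(p)+a_{2}(p)+a_{3}(p)<1$ forces $\frac{a_{1}(p)+a_{2}(p)}{1-a_{3}(p)}<1$, the displayed key inequality follows.

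With this in hand the iteration estimates coincide with those of Theorem~\ref{(teo4.4)}. Since $P_{K}$ is nonexpansive and $P_{K}(p)=\{p\}$ by Lemma~\ref{(lemma2.12)}, and since $X$ is $R$-convex by Lemma~\ref{(lemma2.2)}, I would apply the CN$^{\ast}$ inequality to the geodesic averages defining $y_{n}$ and $x_{n+1}$ and insert the key inequality for $v_{n}\in Tx_{n}$ and $u_{n}\in Ty_{n}$, obtaining
\begin{equation*}
d^{2}(y_{n},p)\leq d^{2}(x_{n},p)+\beta _{n}\Big(\frac{k_{2}(p)}{1-a_{3}(p)}-\frac{R}{2}(1-\beta _{n})\Big)d^{2}(v_{n},x_{n})\leq d^{2}(x_{n},p)
\end{equation*}
and then
\begin{equation*}
d^{2}(x_{n+1},p)\leq d^{2}(y_{n},p)+\alpha _{n}\Big(\frac{k_{2}(p)}{1-a_{3}(p)}-\frac{R}{2}(1-\alpha _{n})\Big)d^{2}(u_{n},y_{n})\leq d^{2}(y_{n},p)\leq d^{2}(x_{n},p).
\end{equation*}
Hence $\{d(x_{n},p)\}$ is nonincreasing, so $\lim_{n\rightarrow\infty}d(x_{n},p)$ exists for every $p\in F(T)$, and the sandwich $d(x_{n+1},p)\leq d(y_{n},p)\leq d(x_{n},p)$ forces the two bracketed slack terms to vanish in the limit.

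Invoking the two $\liminf$ hypotheses, the coefficients multiplying $d^{2}(v_{n},x_{n})$ and $d^{2}(u_{n},y_{n})$ stay bounded away from $0$, so $\lim_{n\rightarrow\infty}d(v_{n},x_{n})=0$; because $v_{n}\in Tx_{n}$ this yields $\lim_{n\rightarrow\infty}d(x_{n},Tx_{n})=0$. Now $\{d(x_{n},p)\}$ converges for all $p\in F(T)$ and $d(x_{n},Tx_{n})\to 0$, so Lemma~\ref{(lemma4.3)} gives $\omega_{w}(x_{n})\subseteq F(T)$ with $\omega_{w}(x_{n})$ a single point, which is exactly the statement that $\{x_{n}\}$ is $\Delta$-convergent to a point of $F(T)$. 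The one genuinely new step, and hence the main obstacle, is the first paragraph: one must check that under $T:K\rightarrow KC(X)$ and $Tp=\{p\}$ the set-distances $H(Tp,Tx)$, $d(Tp,x)$ and $d(p,Tx)$ degenerate in the correct directions so that the type II inequality collapses to the scalar key inequality; once that is secured, the remainder is a line-by-line repetition of the type I convergence proof.
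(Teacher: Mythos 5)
Your proposal is correct and follows essentially the same route as the paper: derive the scalar estimate $d^{2}(u,p)\leq d^{2}(x,p)+\frac{k_{2}(p)}{1-a_{3}(p)}d^{2}(u,x)$ from the type II inequality using $Tp=\{p\}$, run the CN$^{\ast}$/projection estimates on iteration (1.1) to get monotonicity of $d(x_{n},p)$ and vanishing of the slack terms, conclude $d(x_{n},Tx_{n})\rightarrow 0$ from the $\liminf$ hypotheses, and finish with Lemma \ref{(lemma4.3)}. The only (harmless) cosmetic difference is that you work pointwise with $v_{n}\in Tx_{n}$ and $u_{n}\in Ty_{n}$ and actually justify the key inequality, whereas the paper states it in Hausdorff-distance form $H^{2}(Tx,Tp)\leq d^{2}(x,p)+\frac{k_{2}(p)}{1-a_{3}(p)}d^{2}(Tx,x)$ without derivation and then passes through $d(v_{n},p)\leq H(Tx_{n},Tp)$.
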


\begin{proof}
Let $p\in F(T)$ then for any $x\in K,$we have that%
\begin{equation*}
H^{2}(Tx,Tp)\leq d^{2}(x,p)+\frac{k_{2}(p)}{1-a_{3}(p)}d^{2}(Tx,x)
\end{equation*}%
since metric projection $P_{K}$ is nonexpansive by Lemma \ref{(lemma2.12)},
and $P_{K}(p)=\{x\in K:d(p,x)=d(p,K)\}=\{p\}$ we have%
\begin{eqnarray*}
d^{2}(y_{n},p) &=&d^{2}(P_{K}((1-\beta _{n})x_{n}\oplus \beta
_{n}v_{n}),P_{K}(p)) \\
&\leq &d^{2}((1-\beta _{n})x_{n}\oplus \beta _{n}v_{n},p) \\
&\leq &(1-\beta _{n})d^{2}(x_{n},p)+\beta _{n}d^{2}(v_{n},p) \\
&&-\frac{R}{2}(1-\beta _{n})\beta _{n}d^{2}(x_{n},v_{n}) \\
&\leq &(1-\beta _{n})d^{2}(x_{n},p)+\beta _{n}d^{2}(v_{n},Tp) \\
&&-\frac{R}{2}(1-\beta _{n})\beta _{n}d^{2}(x_{n},Tx_{n}) \\
&\leq &(1-\beta _{n})d^{2}(x_{n},p)+\beta _{n}H^{2}(Tx_{n},Tp) \\
&&-\frac{R}{2}(1-\beta _{n})\beta _{n}d^{2}(x_{n},Tx_{n}) \\
&\leq &(1-\beta _{n})d^{2}(x_{n},p)+\beta _{n}(d^{2}(x_{n},p) \\
&&+\frac{k_{2}(p)}{1-a_{3}(p)}d^{2}(Tx_{n},x_{n}))-\frac{R}{2}(1-\beta
_{n})\beta _{n}d^{2}(x_{n},Tx_{n}) \\
&\leq &d^{2}(x_{n},p)+\beta _{n}(\frac{k_{2}(p)}{1-a_{3}(p)}-\frac{R}{2}%
(1-\beta _{n}))d^{2}(Tx_{n},x_{n})) \\
&\leq &d^{2}(x_{n},p)
\end{eqnarray*}%
and 
\begin{eqnarray*}
d^{2}(x_{n+1},p) &=&d^{2}(P_{K}((1-\alpha _{n})y_{n}\oplus \alpha
_{n}u_{n}),P_{K}(p)) \\
&\leq &d^{2}((1-\alpha _{n})y_{n}\oplus \alpha _{n}u_{n}),p) \\
&\leq &(1-\alpha _{n})d^{2}(y_{n},p)+\alpha _{n}d^{2}(u_{n},p) \\
&&-\frac{R}{2}(1-\alpha _{n})\alpha _{n}d^{2}(y_{n},u_{n}) \\
&\leq &(1-\alpha _{n})d^{2}(y_{n},p)+\alpha _{n}d^{2}(u_{n},Tp) \\
&&-\frac{R}{2}(1-\alpha _{n})\alpha _{n}d^{2}(y_{n},Ty_{n}) \\
&\leq &(1-\alpha _{n})d^{2}(y_{n},p)+\alpha _{n}H^{2}(Ty_{n},Tp) \\
&&-\frac{R}{2}(1-\alpha _{n})\alpha _{n}d^{2}(y_{n},Ty_{n}) \\
&\leq &(1-\alpha _{n})d^{2}(y_{n},p)+\alpha _{n}(d^{2}(y_{n},p) \\
&&+\frac{k_{2}(p)}{1-a_{3}(p)}d^{2}(Ty_{n},y_{n}))-\frac{R}{2}(1-\alpha
_{n})\alpha _{n}d^{2}(y_{n},Ty_{n}) \\
&\leq &(1-\alpha _{n})d^{2}(y_{n},p)+\alpha _{n}(d^{2}(y_{n},p)) \\
&&+\alpha _{n}(\frac{k_{2}(p)}{1-a_{3}(p)}-\frac{R}{2}(1-\alpha
_{n}))d^{2}(Ty_{n},y_{n})) \\
&\leq &(1-\alpha _{n})d^{2}(y_{n},p)+\alpha _{n}(d^{2}(y_{n},p)) \\
&&+\alpha _{n}(\frac{k_{2}(p)}{1-a_{3}(p)}-\frac{R}{2}(1-\alpha
_{n}))d^{2}(Ty_{n},y_{n})) \\
&\leq &d^{2}(y_{n},p)+\alpha _{n}(\frac{k_{2}(p)}{1-a_{3}(p)}-\frac{R}{2}%
(1-\alpha _{n}))d^{2}(Ty_{n},y_{n})) \\
&\leq &d^{2}(y_{n},p) \\
&\leq &d^{2}(x_{n},p).
\end{eqnarray*}

Here we have $d^{2}(x_{n+1},p)\leq d^{2}(x_{n},p)$ implies that $%
\lim_{n\rightarrow \infty }d(x_{n},p)$ exists,and $d(x_{n+1},p)\leq
d(y_{n},p)\leq d(x_{n},p)$ implies $\lim_{n\rightarrow \infty
}[d(x_{n},p)-d(y_{n},p)]=0$. Since $\beta _{n}(\frac{k_{2}(p)}{1-a_{3}(p)}-%
\frac{R}{2}(1-\beta _{n}))d^{2}(Tx_{n},x_{n}))\leq
d^{2}(x_{n},p)-d^{2}(y_{n},p),$by assumption we have that $%
\lim_{n\rightarrow \infty }d^{2}(Tx_{n},x_{n})=0,$so $\lim_{n\rightarrow
\infty }d(Tx_{n},x_{n})=0$ Hence, by Lemma \ref{(lemma4.3)}, $\omega
_{w}(x_{n})\subseteq F(T)$ and $\omega _{w}(x_{n})$ include exactly one
point.,This means that $\{x_{n}\}$ have a $\Delta -$limit which in $F(T)$
\end{proof}

\begin{theorem}
LLet $\kappa >0$ and $X$ be a complete $CAT(\kappa )$ space with $%
diam(X)\leq \frac{\pi -\varepsilon }{2\sqrt{\kappa }}$ for some $\varepsilon
\in (0,\pi /2)$. Let $K$ be a nonempty compact convex subset of $X$, and $%
T:K\rightarrow KC(X)$ be a generalized multivalued hybrid mapping type II
with $\frac{2k_{1}(x)}{1-a_{2}(x)}<\frac{R}{2}$ for all $x\in K$ where $%
R=(\pi -2\varepsilon )\tan \varepsilon ),$ $F(T)\neq \emptyset $ and $%
Tp=\{p\}$ for all $p\in F(T)$. If $\{x_{n}\}$ is a sequence in $K$ \ defined
by (1.1) with $\lim \inf_{n\rightarrow \infty }\alpha _{n}[(1-\alpha _{n})%
\frac{R}{2}-\frac{2k_{2}(x)}{1-a_{3}(x)}]>0$ and $\lim \inf_{n\rightarrow
\infty }\beta _{n}[(1-\beta _{n})\frac{R}{2}-\frac{2k_{2}(x)}{1-a_{3}(x)}]>0$
then $\{x_{n}\}$ have a $\Delta -$limit which in $F(T)$
\end{theorem}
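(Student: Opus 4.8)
The plan is to recycle the distance estimates of the immediately preceding theorem and then exploit compactness of $K$ to upgrade $\Delta$-convergence to strong convergence. First I would carry over verbatim the two block computations for $d^{2}(y_{n},p)$ and $d^{2}(x_{n+1},p)$: using that $P_{K}$ is nonexpansive (Lemma \ref{(lemma2.12)}), that $P_{K}(p)=\{p\}$, the $R$-convexity inequality (the $CN^{\ast}$ inequality) available from Lemma \ref{(lemma2.2)}, and the fixed-point estimate $H^{2}(Tx,Tp)\le d^{2}(x,p)+\frac{k_{2}(p)}{1-a_{3}(p)}d^{2}(Tx,x)$ that follows from the type II inequality together with the hypothesis $Tp=\{p\}$. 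These give $d^{2}(x_{n+1},p)\le d^{2}(y_{n},p)\le d^{2}(x_{n},p)$ for every $p\in F(T)$, so $\lim_{n\to\infty}d(x_{n},p)$ exists, and the surplus terms controlled by the two $\liminf$ hypotheses force $\lim_{n\to\infty}d(x_{n},Tx_{n})=0$. Up to this point the argument is identical to the closed-convex case, so I would not reproduce the algebra in full.

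Next I would use compactness. Since $K$ is compact, $\{x_{n}\}$ has a subsequence $\{x_{n_{i}}\}$ with $x_{n_{i}}\to z$ strongly for some $z\in K$. Strong convergence implies $\Delta$-convergence of $\{x_{n_{i}}\}$ to $z$, and $d(x_{n_{i}},Tx_{n_{i}})\to 0$ is inherited from the full sequence. Because $T:K\rightarrow KC(X)$ is a generalized multivalued hybrid mapping type II with $\frac{2k_{1}(x)}{1-a_{2}(x)}<\frac{R}{2}$, the hypotheses of the type II demiclosed principle (Theorem \ref{(teo4.3)}) are met, so applying it to $\{x_{n_{i}}\}$ yields $z\in Tz$, that is, $z\in F(T)$.

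Finally I would close the loop: since $z\in F(T)$, the limit $\lim_{n\to\infty}d(x_{n},z)$ exists by the first step, while along the subsequence $d(x_{n_{i}},z)\to 0$; hence $\lim_{n\to\infty}d(x_{n},z)=0$, so $x_{n}\to z$ strongly and in particular $\Delta\text{-}\lim_{n}x_{n}=z\in F(T)$. The main obstacle is this middle step, identifying $z$ as a fixed point: unlike the type I compact theorem (Theorem \ref{(teo4.6)}), no continuity of $T$ is assumed here, so one cannot pass to the limit directly in $d(z,Tz)\le d(z,x_{n_{i}})+d(x_{n_{i}},Tx_{n_{i}})+H(Tx_{n_{i}},Tz)$ and must instead invoke the demiclosedness of Theorem \ref{(teo4.3)}. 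The point requiring care is simply that the $R$-convexity constant and the ratio condition $\frac{2k_{1}(x)}{1-a_{2}(x)}<\frac{R}{2}$ imposed here are exactly those demanded by that theorem, so its conclusion transfers to the subsequence without any additional hypotheses.
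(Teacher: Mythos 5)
Your proof is correct, but it does not follow the paper's route, and the divergence is instructive. The paper disposes of this theorem with a one-line cross-reference: ``Proof is similar to Theorem \ref{(teo4.6)}.'' Unpacked, that means: recycle the monotonicity estimates of the immediately preceding (closed-convex, type II) theorem to get that $\lim_{n\rightarrow \infty }d(x_{n},p)$ exists and $\lim_{n\rightarrow \infty }d(x_{n},Tx_{n})=0$, extract by compactness a strongly convergent subsequence $x_{n_{i}}\rightarrow z$, and then pass to the limit in $d(z,Tz)\leq d(z,x_{n_{i}})+d(x_{n_{i}},Tx_{n_{i}})+H(Tx_{n_{i}},Tz)$ using the \emph{continuity of} $T$. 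You share the first two steps verbatim with the paper, but you identify $z$ as a fixed point differently: you observe that strong convergence of the subsequence gives $\Delta$-convergence, and then invoke the demiclosed principle for type II mappings (Theorem \ref{(teo4.3)}), whose hypotheses ($T:K\rightarrow KC(X)$, the ratio condition $\frac{2k_{1}(x)}{1-a_{2}(x)}<\frac{R}{2}$, closed convex $K$) are exactly those assumed here. This is a genuine improvement rather than a cosmetic variation: unlike Theorem \ref{(teo4.6)}, the present statement does \emph{not} assume $T$ continuous, so the paper's borrowed argument has a gap at precisely the step you flag (one cannot conclude $H(Tx_{n_{i}},Tz)\rightarrow 0$ without continuity), whereas your use of demiclosedness needs no such hypothesis. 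Your final step --- combining $z\in F(T)$ with the existence of $\lim_{n\rightarrow \infty }d(x_{n},z)$ and $d(x_{n_{i}},z)\rightarrow 0$ to get $\lim_{n\rightarrow \infty }d(x_{n},z)=0$ --- also makes explicit the passage from a subsequential limit to a limit of the whole sequence, which the paper leaves implicit, and in fact yields strong convergence, strictly more than the stated $\Delta$-convergence conclusion.
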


\begin{proof}
Proof is similar to Theorem \ref{(teo4.6)}$.$
\end{proof}

\begin{theorem}
Let $\kappa >0$ and $X$ be a complete $CAT(\kappa )$ space with $diam(X)\leq 
\frac{\pi -\varepsilon }{2\sqrt{\kappa }}$ for some $\varepsilon \in (0,\pi
/2)$. Let $K$ be a nonempty closed convex subset of $X$, and $T:K\rightarrow
KC(X)$ be a $(a_{1},a_{2},b_{1},b_{2})-$multivalued hybrid mapping type II
with $F(T)\neq \emptyset $, $Tp=\{p\}$ for all $p\in F(T)$ and $a_{1}(x)\geq
1$ for all $x\in C$ . Let$\{x_{n}\}$ be a sequence in $K$ defined by (1.2)
strongly converges to a point of $F(T)$
\end{theorem}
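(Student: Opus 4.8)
The plan is to mirror the architecture of Theorem \ref{(teo4.5)}: first extract a nonexpansive-type inequality against fixed points, then derive $\lim_{n\to\infty}d(x_{n},Tx_{n})=0$, and only at the end promote this to strong convergence.

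First I would prove the basic inequality $d(u,p)\le d(x,p)$ for every $x\in K$, $u\in Tx$ and $p\in F(T)$. Fixing $p\in F(T)$ and putting $y=p$ in the type II defining inequality, and using $Tp=\{p\}$ so that $H^{2}(Tx,Tp)=\sup_{u\in Tx}d^{2}(u,p)$ and $d^{2}(x,Tp)=d^{2}(x,p)$, one gets
\[
a_{1}(x)\sup_{u\in Tx}d^{2}(u,p)+a_{2}(x)d^{2}(Tx,p)\le (b_{1}(x)+b_{2}(x))\,d^{2}(x,p)\le d^{2}(x,p).
\]
Writing $S=\sup_{u\in Tx}d^{2}(u,p)$ and $I=d^{2}(Tx,p)=\inf_{u\in Tx}d^{2}(u,p)\le S$, I split on the sign of $a_{2}(x)$. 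If $a_{2}(x)\ge 0$ then $a_{1}(x)S\le d^{2}(x,p)$, and $a_{1}(x)\ge 1$ gives $S\le d^{2}(x,p)$. If $a_{2}(x)<0$ then $a_{2}(x)I\ge a_{2}(x)S$, so $(a_{1}(x)+a_{2}(x))S\le a_{1}(x)S+a_{2}(x)I\le d^{2}(x,p)$, and $a_{1}(x)+a_{2}(x)\ge 1$ again yields $S\le d^{2}(x,p)$. In both cases $d(u,p)\le d(x,p)$ for all $u\in Tx$.

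Next, using that $P_{K}$ is nonexpansive with $P_{K}(p)=p$ (Lemma \ref{(lemma2.12)}) together with Proposition \ref{(prop2.5)}, I would push this inequality through the three lines of (1.2). Since $w_{n}\in Tx_{n}$, $v_{n}\in Tz_{n}$ and $u_{n}\in Ty_{n}$, the basic inequality gives $d(w_{n},p)\le d(x_{n},p)$, $d(v_{n},p)\le d(z_{n},p)$ and $d(u_{n},p)\le d(y_{n},p)$, and the convex-combination estimate of Proposition \ref{(prop2.5)} yields in turn $d(z_{n},p)\le d(x_{n},p)$, $d(y_{n},p)\le d(x_{n},p)$ and $d(x_{n+1},p)\le d(y_{n},p)\le d(x_{n},p)$. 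Hence $\{d(x_{n},p)\}$ is nonincreasing and $\lim_{n\to\infty}d(x_{n},p)$ exists for every $p\in F(T)$; the same chain forces $\lim_{n}d(z_{n},p)=\lim_{n}d(x_{n},p)$. To obtain $\lim_{n\to\infty}d(x_{n},Tx_{n})=0$ I would, exactly as in Theorem \ref{(teo4.5)}, apply the CN$^{*}$ inequality (legitimate because $X$ is $R$-convex by Lemma \ref{(lemma2.2)}) to the geodesic average defining $z_{n}$, obtaining
\[
\tfrac{R}{2}(1-\beta_{n})\beta_{n}\,d^{2}(x_{n},w_{n})\le d^{2}(x_{n},p)-d^{2}(z_{n},p);
\]
since the right-hand side tends to $0$ while $\inf_{n}(1-\beta_{n})\beta_{n}>0$, this gives $d(x_{n},w_{n})\to 0$, that is, $d(x_{n},Tx_{n})\to 0$.

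The last, and hardest, step is upgrading this to strong convergence. The facts assembled so far --- $\lim_{n}d(x_{n},p)$ exists for each $p\in F(T)$, $d(x_{n},Tx_{n})\to 0$, and $F(T)$ closed with $Tp=\{p\}$ --- are precisely what produce the single-point conclusion $\omega_{w}(x_{n})\subseteq F(T)$ of Lemmas \ref{(lemma4.1)}--\ref{(lemma4.3)}, but that route only delivers a $\Delta$-limit. For a genuine strong limit I would extract a strongly convergent subsequence $x_{n_{i}}\to z\in K$; since strong convergence implies $\Delta$-convergence and $d(x_{n_{i}},Tx_{n_{i}})\to 0$, the demiclosed principle of Theorem \ref{(teo4.1) copy(1)} (applicable because $Tz\in KC(X)$ is compact and $a_{1}\ge 1$) yields $z\in Tz$, so $z\in F(T)$; then, because $\lim_{n}d(x_{n},z)$ exists and $d(x_{n_{i}},z)\to 0$, the whole sequence converges strongly to $z$. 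The genuine obstacle is the very existence of the convergent subsequence: the stated closed–convex hypothesis on $K$ does not force it. The argument really needs either compactness of $K$ (as in Theorem \ref{(teo4.6)}), or a Condition (I)/hemicompactness hypothesis on $T$, under which $d(x_{n},Tx_{n})\to 0$ forces $d(x_{n},F(T))\to 0$ and the monotonicity of $\{d(x_{n},p)\}$ then makes $\{x_{n}\}$ Cauchy with limit in the closed set $F(T)$.
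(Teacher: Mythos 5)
Your proposal follows the paper's own proof almost step for step: the paper likewise first derives $H(Tx,Tp)\le d(x,p)$ by splitting on the sign of $a_{2}(x)$ (using $a_{1}(x)\ge 1$ in one case and $a_{1}(x)+a_{2}(x)\ge 1$ in the other), then threads this through (1.2) via the nonexpansiveness of $P_{K}$ and Proposition \ref{(prop2.5)} to get that $d(x_{n},p)$ is nonincreasing, and finally applies the CN$^{\ast}$/$R$-convexity estimate on the $z_{n}$-line to conclude $d(x_{n},Tx_{n})\to 0$; your version of the basic inequality, working directly with $S=\sup_{u\in Tx}d^{2}(u,p)$ and $I=\inf_{u\in Tx}d^{2}(u,p)$, is in fact cleaner than the paper's two-stage argument. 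Note that both you and the paper quietly use parameter conditions not present in the statement (e.g.\ $\inf_{n}\beta_{n}(1-\beta_{n})>0$, and something like $\inf_{n}\alpha_{n}>0$ is needed to pass from the existence of $\lim_{n}d(x_{n},p)$ to $\lim_{n}d(z_{n},p)=\lim_{n}d(x_{n},p)$). The obstruction you identify at the end is genuine, and it is a defect of the paper rather than of your argument: the paper finishes with ``Rest of the proof is similar to Theorem \ref{(teo4.6)}'', but Theorem \ref{(teo4.6)} assumes $K$ compact and $T$ continuous, neither of which is hypothesized here; for $K$ merely closed and convex the iterates need not have any strongly convergent subsequence, so the stated strong-convergence conclusion does not follow from the stated hypotheses. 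Your proposed repairs --- compactness of $K$, or hemicompactness/Condition (I), under which $d(x_{n},F(T))\to 0$ combined with the monotonicity of $\{d(x_{n},p)\}$ makes $\{x_{n}\}$ Cauchy with limit in the closed set $F(T)$ --- are exactly the standard ways to make the theorem correct.
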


\begin{proof}
Let $p\in F(T)$ then $T(p)=\{p\}$since $T$ is a $(a_{1},a_{2},b_{1},b_{2})-$%
multivalued hybrid mapping we have that%
\begin{eqnarray*}
d^{2}(Tx,p) &\leq &a_{1}(x)d^{2}(Tx,p)+a_{2}(p)d^{2}(Tx,p) \\
&\leq &a_{1}(x)H^{2}(Tx,Tp)+a_{2}(x)d^{2}(Tx,p) \\
&\leq &b_{1}(x)d^{2}(x,p)+b_{2}(x)d^{2}(x,p) \\
&\leq &b_{1}(x)d^{2}(x,p)+b_{2}(x)d^{2}(x,p) \\
&\leq &d^{2}(x,p)
\end{eqnarray*}%
Assume that $a_{2}(x)\geq 0$ for all $x\in C$ since $T$ is a $%
(a_{1},a_{2},b_{1},b_{2})-$multivalued hybrid mapping we have that 
\begin{eqnarray*}
a_{1}(x)H^{2}(Tx,Tp) &\leq
&b_{1}(x)d^{2}(x,p)+b_{2}(x)d^{2}(x,p)-a_{2}(x)d^{2}(Tx,p) \\
&\leq &d^{2}(x,p)
\end{eqnarray*}%
so we have%
\begin{equation*}
H^{2}(Tx,Tp)\leq \frac{1}{a_{1}(x)}d^{2}(x,p)\leq d^{2}(x,p)
\end{equation*}%
And now if $a_{2}(x)\leq 0$ for all $x\in C$ then since $a_{1}(x)+a_{2}(x)%
\geq 1$, $1\geq \frac{1}{a_{1}(x)}-\frac{a_{2}(x)}{a_{1}(x)}$ and since $T$
is a $(a_{1},a_{2},b_{1},b_{2})-$multivalued hybrid mapping we have that%
\begin{eqnarray*}
a_{1}(x)H^{2}(Tx,Tp) &\leq
&b_{1}(x)d^{2}(x,p)+b_{2}(x)d^{2}(x,p)-a_{2}(x)d^{2}(Tx,p) \\
&\leq &b_{1}(x)d^{2}(x,p)+b_{2}(x)d^{2}(x,p)-a_{2}(x)d^{2}(Tx,p) \\
&\leq &d^{2}(x,p)-a_{2}(x)d^{2}(Tx,p)
\end{eqnarray*}%
which implies that 
\begin{eqnarray*}
H^{2}(Tx,Tp) &\leq &\frac{1}{a_{1}(x)}d^{2}(x,p)-\frac{a_{2}(p)}{a_{1}(p)}%
d^{2}(Tx,p) \\
&\leq &\frac{1}{a_{1}(x)}d^{2}(x,p)-\frac{a_{2}(x)}{a_{1}(x)}d^{2}(x,p) \\
&=&(\frac{1}{a_{1}(x)}-\frac{a_{2}(x)}{a_{1}(x)})d^{2}(p,x) \\
&\leq &d^{2}(p,x).
\end{eqnarray*}

Hence we have that $H(Tp,Tx)\leq d(p,x)$.%
\begin{eqnarray*}
d(z_{n},p) &=&d(P_{K}((1-\beta _{n})x_{n}\oplus \beta _{n}w_{n}),P_{K}(p)) \\
&=&d(P_{K}((1-\beta _{n})x_{n}\oplus \beta _{n}w_{n}),p) \\
&\leq &d((1-\beta _{n})x_{n}\oplus \beta _{n}w_{n},p) \\
&\leq &(1-\beta _{n})d(x_{n},p)+\beta _{n}d(w_{n},p) \\
&\leq &(1-\beta _{n})d(x_{n},p)+\beta _{n}d(w_{n},Tp) \\
&\leq &(1-\beta _{n})d(x_{n},p)+\beta _{n}H(Tx_{n},Tp) \\
&\leq &(1-\beta _{n})d(x_{n},p)+\beta _{n}d(x_{n},p) \\
&\leq &d(x_{n},p)
\end{eqnarray*}%
and 
\begin{eqnarray*}
d(y_{n},p) &=&d(P_{K}((1-\alpha _{n})w_{n}\oplus \alpha _{n}v_{n},p) \\
&\leq &d(P_{K}((1-\alpha _{n})w_{n}\oplus \alpha _{n}v_{n},P_{K}(p)) \\
&\leq &d((1-\alpha _{n})w_{n}\oplus \alpha _{n}v_{n},p) \\
&\leq &d((1-\alpha _{n})w_{n}\oplus \alpha _{n}v_{n}),p) \\
&\leq &(1-\alpha _{n})d(w_{n},p)+\alpha _{n}d(v_{n},p) \\
&\leq &(1-\alpha _{n})d(w_{n},Tp)+\alpha _{n}d(v_{n},Tp) \\
&\leq &(1-\alpha _{n})H(Tx_{n},p)+\alpha _{n}H(Tz_{n},Tp) \\
&\leq &(1-\alpha _{n})d(x_{n},p)+\alpha _{n}d(z_{n},p) \\
&\leq &d^{2}(z_{n},p)
\end{eqnarray*}%
and%
\begin{eqnarray*}
d^{2}(x_{n+1},p) &=&d(P_{K}(u_{n}),P_{K}(p)) \\
&\leq &d(u_{n},p) \\
&\leq &H(Ty_{n},Tp) \\
&\leq &d(y_{n},p)
\end{eqnarray*}

so by $d(x_{n+1},p)\leq d(y_{n},p)\leq d(z_{n},p)\leq d(x_{n},p)$ implies $%
\lim_{n\rightarrow \infty }d(x_{n},p)$ exist. By CN$^{\ast }$ inequality we
have

\begin{eqnarray*}
d^{2}(z_{n},p) &=&d^{2}(P_{K}((1-\beta _{n})x_{n}\oplus \beta
_{n}w_{n}),P_{K}(p)) \\
&=&d^{2}(P_{K}((1-\beta _{n})x_{n}\oplus \beta _{n}w_{n}),p) \\
&\leq &d^{2}((1-\beta _{n})x_{n}\oplus \beta _{n}w_{n},p) \\
&\leq &(1-\beta _{n})d^{2}(x_{n},p)+\beta _{n}d^{2}(w_{n},p)-\frac{R}{2}%
(1-\beta _{n})\beta _{n}d^{2}(x_{n},w_{n}) \\
&\leq &(1-\beta _{n})d^{2}(x_{n},p)+\beta _{n}d^{2}(w_{n},Tp)-\frac{R}{2}%
(1-\beta _{n})\beta _{n}d^{2}(x_{n},Tx_{n}) \\
&\leq &(1-\beta _{n})d^{2}(x_{n},p)+\beta _{n}H^{2}(Tx_{n},Tp)-\frac{R}{2}%
(1-\beta _{n})\beta _{n}d^{2}(x_{n},Tx_{n}) \\
&\leq &(1-\beta _{n})d^{2}(x_{n},p)+\beta _{n}d^{2}(x_{n},p)-\frac{R}{2}%
(1-\beta _{n})\beta _{n}d^{2}(x_{n},Tx_{n}) \\
&\leq &d^{2}(x_{n},p)-\frac{R}{2}(1-\beta _{n})\beta _{n}d^{2}(x_{n},Tx_{n})
\end{eqnarray*}

implies that 
\begin{equation*}
\frac{R}{2}(1-\beta _{n})\beta _{n}d^{2}(x_{n},Tx_{n})\leq
d^{2}(x_{n},p)-d^{2}(z_{n},p).
\end{equation*}

Since $\lim_{n\rightarrow \infty }(d(x_{n},p)-d^{2}(z_{n},p))=0$ and $%
\inf_{n}(1-\beta _{n})\beta _{n}>0$ then we have that $\lim_{n\rightarrow
\infty }d(x_{n},Tx_{n})=0.$ Rest of th proof is similar to Theorem \ref%
{(teo4.6)}$.$

Then by Lemma \ref{(teo4.2)} $\{x_{n}\}$have $\Delta -$limit which in $F(T).$
The above discussion will hold if $a_{2}(p)\geq 0$
\end{proof}


\end{document}